\documentclass[10pt]{article}
\usepackage{amsfonts}
\usepackage{amsmath,hhline,epsfig,latexsym}
\usepackage{amssymb}
\usepackage{graphicx}
\usepackage[english]{babel}
\usepackage{hyperref}
\usepackage{xcolor}
\makeatletter
\renewcommand{\paragraph}{\@startsection{paragraph}{4}{0ex}%
   {-3.25ex plus -1ex minus -0.2ex}%
   {1.5ex plus 0.2ex}%
   {\normalfont\normalsize\bfseries}}
\makeatother

\stepcounter{secnumdepth}
\stepcounter{tocdepth}
\usepackage{amsmath}
\setcounter{MaxMatrixCols}{30}
\providecommand{\U}[1]{\protect\rule{.1in}{.1in}}
\hoffset=-1.6cm \voffset=-1.35cm \setlength{\textwidth} {15.cm}
\setlength{\textheight} {21.5cm}
\pagestyle{headings}

\newtheorem{thm}{Theorem}
\newtheorem{prop}{Proposition}

\newtheorem{rmk}{Remark}

\newtheorem{lem}{Lemma}

\newenvironment{proof}[1][Proof]
{\noindent\textbf{#1:} }{\hfill\rule{0.5em}{0.5em}}

\newcommand{{\resizebox{}{!}{\input .pstex_t}}}[2]{{\resizebox{#1}{!}{\input #2.pstex_t}}}
\newcommand{\Fin}{\hfill\rule{0.5em}{0.5em}}
\newtheorem{theorem}{Theorem}[section]
\newtheorem{remark}[theorem]{Remark}

\begin{document}


\title{\textbf{Remarks on the control of two-phase Stefan free-boundary problems}}



\author{
\textsc{Raul K.C. Ara\'ujo}\thanks{Department of Mathematics, Federal University of Pernambuco, UFPE, CEP 50740-545, Recife,
PE, Brazil. E-mail: {\tt  raul@dmat.ufpe.br}. Partially supported by CNPq (Brazil).}
\quad
\and
	\textsc{Enrique Fern\' andez-Cara}\thanks{University of Sevilla, Dpto. E.D.A.N, Aptdo 1160, 41080 Sevilla, Spain. E-mail: {\tt cara@us.es}. Partially supported by grant MTM$2016$-$76990$-P, Ministry of Economy and Competitiveness (Spain).}
	\and
	\textsc{Juan L\'imaco}\thanks{Inst. Matemática, Universidade Federal Fluminense, Niterói, RJ, Brazil. E-mail: {\tt jlimaco@vm.uff.br}.}
	\and
	\textsc{Diego A. Souza}\thanks{University of Sevilla, Dpto. E.D.A.N, Aptdo 1160, 41080 Sevilla, Spain. E-mail: {\tt desouza@us.es}. Partially supported by grant MTM$2016$-$76990$-P, Ministry of Economy and Competitiveness (Spain).}
}
\date{ }
\maketitle


\begin{abstract}
        This paper concerns the null controllability of the two-phase 1D Stefan problem with distributed controls. 
        This is a free-boundary problem that models solidification or melting processes.
        In each phase, a parabolic equation, completed with initial and boundary conditions must be satisfied;
        the phases are separated by a phase-change interface where an additional free-boundary condition is imposed.
        We assume that two localized sources of heating/cooling controls act on the system (one in each phase).
        We prove a local null controllability result: the temperatures and the interface can be respectively steered to zero and to a prescribed location provided 
        the initial data and interface position are sufficiently close to the targets. The ingredients of the proofs are a compactness-uniqueness argument (to deduce 
        appropriate observability estimates adapted to constraints) and a fixed-point formulation and resolution of the controllability problem (to deduce the result for the nonlinear system).
	We also prove a negative result corresponding to the case where only one control acts on the system and the interface does not collapse to the boundary.
\end{abstract}

\

\noindent {\bf Keywords:}  Null controllability, free-boundary problems, Carleman inequalities.
\vskip 0.25cm\par\noindent
\noindent {\bf Mathematics Subject Classification:} 35R35, 80A22, 93B05, 93C20

\tableofcontents


\section{Introduction}

	The two-phase Stefan problem is a mathematical model (a coupled system composed of two PDEs and one ODE) used to describe liquid-solid phase transition processes.
	These processes appear frequently in science and engineering like, for example, in the continuous casting of steel \cite{petrus}, cancer treatment by cryosurgeries \cite{rabin}, 
	crystal growth \cite{conrad}, lithium-ion batteries \cite{camacho}, among others. It is also important to highlight that, besides the interest in thermodynamics processes, 
	similar systems represent models for phenomena of other kinds: analysis and computation of the flux in free surfaces \cite{brebbia, hermans, stoker}, fluid-solid 
	interaction \cite{fluid, liu, vazquez}, gases flow through porous medium \cite{aronson, fasano, vazquez1}, growth of tumors and others  mathematical modelling in biology
	 \cite{friedman2, friedman3}. 
   
	Let us present the mathematical formulation of the two-phase Stefan problem: let $L>0$, $T > 0$ and $\ell_l, \ell_0, \ell_r \in (0,L)$ be given with $\ell_l<\ell_0 < \ell_r$.  
	We also consider two functions $u_0\in H_0^1(0,\ell_0)$ with $u_0 \geq 0$ and $v_0\in H_0^1(\ell_0,L)$ with $v_0 \leq 0$ and two open sets $\omega_l\subset\subset 
	(0,\ell_l)$ and $\omega_r\subset\subset (\ell_r,L)$. At each $t$, the material domain is separated in two parts: $x\in [0, \ell(t))$ (liquid phase) and $x\in (\ell(t), L]$ (solid phase).
	Here, $\ell = \ell(t)$ is the position of the interface between liquid and solid phases; it satisfies  $\ell(0) = \ell_0$ and $\ell(t) \in (\ell_l,\ell_r)$  for all $t$. 
	
	The aim of this paper is to study the controllability properties of the {\it two-phase Stefan problem}:      
        \begin{equation}\label{am}
	\left\{
		\begin{array}{lll}
			u_t - d_lu_{xx} = h_l1_{\mathcal{\omega}_l} 	&\mbox{in}& 	Q_l,		\\
			\noalign{\smallskip}\displaystyle	
      			v_t - d_rv_{xx} = h_r1_{\mathcal{\omega}_r} 	&\mbox{in}& 	Q_r,		\\
			\noalign{\smallskip}\displaystyle	
      			u(0,t)  =0  				&\mbox{on}& 	(0,T),	\\
			\noalign{\smallskip}\displaystyle	
      			 v(L,t) = 0 				&\mbox{on}& 	(0,T),	\\
			\noalign{\smallskip}\displaystyle	
      			u(\cdot,0) = u_0  				&\mbox{in}& 	(0,\ell_0),	\\ 
			\noalign{\smallskip}\displaystyle	
      			v(\cdot,0) = v_0 				&\mbox{in}& 	(\ell_0,L), 	\\
			\noalign{\smallskip}\displaystyle	
			u(\ell(t),t) =v(\ell(t),t) =0  				&\mbox{on}& 	(0,T),	\\
			\noalign{\smallskip}\displaystyle
			- \ell'(t) = d_lu_x(\ell(t),t) - d_rv_{x}(\ell(t),t)	&\mbox{in}&	(0,T).
		\end{array}
	\right.
        \end{equation}
	Here and in the sequel, $d_l$ and $d_r$ must be viewed as diffusion coefficients and we use the notation
        \[
	\left\{
		\begin{array}{l}
        			Q := (0,L)\times (0,T),\\
			\noalign{\smallskip}\displaystyle	
        			Q_l := \{(x,t)\in Q:\,\, t\in (0,T),\,\, x\in (0,\ell(t))\},\\
			\noalign{\smallskip}\displaystyle	
        			Q_r := \{(x,t)\in Q:\,\, t\in (0,T),\,\, x\in (\ell(t),L)\},\\
			\noalign{\smallskip}\displaystyle	
			\mathcal{O}_l =  \omega_l\times (0,T)\,\,\mbox{and}\,\,\mathcal{O}_r = \omega_r\times (0,T).
		\end{array}
	\right.
        \]

 
     The main result in this paper is the following:     		
\begin{thm}\label{twophase}
	Let $\ell_T\in (\ell_l,\ell_r)$. Then there exists $\delta > 0$ such that, for any $u_0\in H_0^1(0,\ell_0)$ with $u_0 \geq 0$, any $v_0\in H_0^1(\ell_0,L)$ with $v_0 \leq 0$ and any 
	$\ell_0\in (\ell_l,\ell_r)$ satisfying 
        \[
	\|u_0\|_{H_0^1(0,\ell_0)} + \|v_0\|_{H_0^1(\ell_0,L)} + |\ell_0 -\ell_T| \leq \delta,
        \]
   	there exist controls $(h_l,h_r)\in L^2(\mathcal{O}_l)\times L^2(\mathcal{O}_r)$ and associated states  $(u,v,\ell)$ with
        \[
   	\left\{
   		\begin{array}{l}
     			\ell\in H^1(0,T)\cap C^{1}((0,T]), \ \ell(t) \in (\ell_l,\ell_r)~~\forall\,\, t\in [0,T],\\
     			\noalign{\smallskip}\displaystyle	
    			u,\,u_x,\,u_t,\,u_{xx}\in L^2(Q_l)~\text{and}~ v,\,v_x,\,v_t,\,v_{xx}\in L^2(Q_r),
   		\end{array}
  	 \right.
        \]
such that 
        \begin{equation}\label{am2}
 \ell(T) = \ell_T,\,\, u(\cdot,T) = 0~~\mbox{in}~~(0,\ell_T)~~\mbox{and}~~
	 v(\cdot,T) = 0~~\mbox{in}~~(\ell_T,L).
        \end{equation}
\end{thm}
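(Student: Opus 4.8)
The plan is to remove the free boundary by a change of variables, control the resulting linearized system (paying attention to the constraint on the final position of the interface), and close the argument by a fixed point. More precisely, write $\ell(t)=\ell_T+\gamma(t)$ with $\gamma(0)=\ell_0-\ell_T$ (small) and $\gamma(T)=0$, and introduce $y=\ell_T x/\ell(t)$ in the liquid phase and $y=\ell_T+(L-\ell_T)(x-\ell(t))/(L-\ell(t))$ in the solid phase, so that $Q_l$ and $Q_r$ become the fixed cylinders $(0,\ell_T)\times(0,T)$ and $(\ell_T,L)\times(0,T)$. In these variables \eqref{am} turns into a system of two parabolic equations with principal parts $d_l\partial_{yy}$ and $d_r\partial_{yy}$, perturbed by lower-order terms (among them a transport term with coefficient proportional to $\gamma'$) that depend on $\gamma$ and $\gamma'$ and vanish when $\gamma\equiv0$, coupled through the homogeneous interface conditions $u(\ell_T,t)=v(\ell_T,t)=0$ to the ODE $\gamma'(t)=-d_l u_y(\ell_T,t)+d_r v_y(\ell_T,t)+(\text{terms}\sim\gamma)$. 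Because $\ell_0$ is close to $\ell_T$, the transformed initial data have norms comparable to $\|u_0\|_{H_0^1}$, $\|v_0\|_{H_0^1}$; and if $\gamma$ stays small, $\ell(t)$ remains in $(\ell_l,\ell_r)$ and the change of variables is a genuine diffeomorphism.

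Dropping the $\gamma$-dependent perturbations leaves two uncoupled heat equations on $(0,\ell_T)$ and $(\ell_T,L)$ with Dirichlet conditions and localized controls in $\omega_l,\omega_r$, together with $\gamma'(t)=-d_l u_y(\ell_T,t)+d_r v_y(\ell_T,t)$, and the targets $u(\cdot,T)=0$, $v(\cdot,T)=0$, $\gamma(T)=0$. The last condition is a scalar linear constraint on the trajectory; its adjoint counterpart is a constant $\sigma\in\R$, and the adjoint system is $-\varphi_t-d_l\varphi_{yy}=0$, $-\psi_t-d_r\psi_{yy}=0$ with $\varphi(0,t)=0$, $\psi(L,t)=0$ and nonhomogeneous interface values $\varphi(\ell_T,t)=\psi(\ell_T,t)=\sigma$. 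The observability estimate adapted to the constraint is
\begin{equation*}
\|\varphi(\cdot,0)\|_{L^2(0,\ell_T)}^2+\|\psi(\cdot,0)\|_{L^2(\ell_T,L)}^2+|\sigma|^2\le C\Big(\iint_{\mathcal{O}_l}|\varphi|^2+\iint_{\mathcal{O}_r}|\psi|^2\Big).
\end{equation*}
A global Carleman inequality for the heat operator on each interval (with weights blowing up as $t\to0^+,T^-$), applied after subtracting a fixed spatial cutoff that absorbs the nonhomogeneous interface value, yields the $\|\varphi(\cdot,0)\|^2+\|\psi(\cdot,0)\|^2$ part up to an extra term $C|\sigma|^2$ on the right-hand side. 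To absorb that term one runs a compactness–uniqueness argument: were the full inequality false, a sequence of adjoint solutions would have left-hand side $1$ but observation term tending to $0$; passing to the limit (parabolic regularity and compact embeddings) gives a nonzero adjoint solution vanishing on $\mathcal{O}_l$ and $\mathcal{O}_r$, hence, by unique continuation for the heat equation, vanishing identically on each cylinder, so the interface condition forces $\sigma=0$ and $\varphi(\cdot,0)=\psi(\cdot,0)=0$, a contradiction. With this inequality, a penalized HUM/duality argument (minimization of a coercive quadratic functional) provides controls $(h_l,h_r)\in L^2(\mathcal{O}_l)\times L^2(\mathcal{O}_r)$ and a trajectory meeting all three targets; choosing the Carleman weights suitably, the controls and the states (with the relevant derivatives) decay like a negative exponential of $1/(T-t)$ near $t=T$, with weighted norms bounded by $C(\|u_0\|_{H_0^1}+\|v_0\|_{H_0^1}+|\ell_0-\ell_T|)$.

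For the nonlinear system, fix a weighted Banach space $X$ of triples $(\bar u,\bar v,\bar\gamma)$ with the above decay. Given $(\bar u,\bar v,\bar\gamma)$ in a small ball of $X$, compute the $\bar\gamma$-dependent coefficients and sources produced by the change of variables and solve the linear controllability problem of the previous step for the resulting non-autonomous heat equations (whose coefficients are small perturbations of $d_l\partial_{yy}$, $d_r\partial_{yy}$, so the Carleman and observability estimates persist); call the controlled trajectory $(u,v,\gamma)$ and set $\Lambda(\bar u,\bar v,\bar\gamma)=(u,v,\gamma)$. The a priori estimates show that, for $\delta$ small, $\Lambda$ maps a suitable small closed ball of $X$ into itself, and the gain of regularity in the parabolic estimates makes $\Lambda$ continuous and compact, so Schauder's theorem yields a fixed point. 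Undoing the change of variables gives a solution of \eqref{am} satisfying \eqref{am2}; smallness of $\delta$ keeps $\ell(t)\in(\ell_l,\ell_r)$ for all $t$, and $\ell\in H^1(0,T)\cap C^1((0,T])$ follows from $\gamma'\in L^2(0,T)$ together with the interior smoothing of the parabolic equations away from $t=0$. (Equivalently, one may recast the last two steps as an application of Liusternik's inverse mapping theorem to a map whose derivative at the origin is the linear control operator.)

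The crux is the constrained observability estimate: a Carleman inequality alone does not ``see'' the scalar constraint on $\gamma(T)$, so recovering the term $|\sigma|^2$ genuinely needs the compactness–uniqueness argument, carried out so that the nonhomogeneous interface condition of the adjoint system is handled consistently. A secondary difficulty is closing the fixed point — one must check that the transport-type terms ($\sim\gamma'$) and the remaining contributions of the change of variables are controlled by the weighted norms and stay small, which is why the Carleman-weighted parabolic regularity must be set up with care.
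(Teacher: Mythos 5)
Your proposal is correct in substance but follows a genuinely different route from the paper. At the linearized level you aim directly at null controllability of the temperatures together with the scalar constraint, via an observability inequality whose left-hand side contains $|\sigma|^2$, recovered by compactness--uniqueness after lifting the nonhomogeneous interface datum; the paper instead works with the augmented adjoint \eqref{am9}, proves a Nakoulima-type ``improved'' observability inequality \eqref{am18} involving the projections onto the span of $(\psi_\ell,\zeta_\ell)$, and deduces only \emph{approximate} controllability of the temperatures with the \emph{exact} constraint (Proposition~\ref{controlconstr}); the $\varepsilon$ is removed only at the very end, after the fixed point (Theorem~\ref{twophase1_approx} and the limit $\varepsilon\to 0$). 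Correspondingly, your fixed point acts on the triple $(\bar u,\bar v,\bar\gamma)$ in Carleman-weighted spaces (Fursikov--Imanuvilov/Liusternik style), while the paper freezes only the interface $\ell$ --- the coefficients of \eqref{am4} depend on $\ell$ alone --- and applies Schauder to $\ell\mapsto\mathcal{L}$, with compactness supplied by the $C^{1/8}$ regularity of the interface fluxes (Lemma~\ref{lemmaregul}). Your route buys a cleaner linear result and no final limiting step; the paper's route avoids weighted bookkeeping for the state and isolates the delicate points (uniform lower bound on $\|\psi_\ell\|+\|\zeta_\ell\|$ over the admissible class, continuity of the control-to-interface map) which in your scheme still must be proved: you need the observability constant to be uniform over the frozen $\bar\gamma$'s, a canonical (e.g.\ minimal-norm) choice of control so that the fixed-point map is single-valued, and continuous dependence of the constrained minimizer on the frozen coefficients --- exactly the work the paper does for $\Lambda_\varepsilon$.

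Two points you should repair explicitly. First, the scaling $y=\ell_T x/\ell(t)$ does not leave the control regions invariant, so $1_{\omega_l}$ becomes a time-dependent set in the new variables; either build the diffeomorphism to be the identity near $\omega_l$ and $\omega_r$ (this is precisely why the paper's map $G$ is constructed as it is) or control from a fixed open set contained in $\bigcap_t (\ell_T/\ell(t))\,\omega_l$, which is nonempty for $\gamma$ small, so that the control transported back to the physical variables is still supported in $\omega_l$. Second, if some of the $\bar\gamma$-dependent terms are frozen as right-hand sides (as your weighted-space formulation suggests), then the linear controllability step must be carried out with sources in Carleman-weighted classes, and the constrained observability inequality has to be established in its weighted form (your unweighted statement, although true by the compactness--uniqueness argument you sketch, is not by itself sufficient for that variant); alternatively, keep all $\bar\gamma$-dependence in the coefficients, in which case the fixed point effectively reduces to the interface variable and the weighted machinery can be dispensed with, as in the paper.
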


\begin{remark}\label{rmq:1}{\rm We will see in Section \ref{sec:lack} that  the maximum principle for parabolic equations implies that the null controllability for \eqref{am} does not hold if one of the controls (for instance $h_r$) vanishes and the interface satisfies $0 < \ell(T) < L$.
	However, the possibility of getting a null control result with only one control when one of the phases collapses to the boundary, that is, $\ell(T) = L$ or $\ell(T) = 0$, is open.
	\Fin}
\end{remark} 
  
	For completeness, let us mention some previous works on the control of our main system and other similar models.
	
	The analysis of the controllability properties for linear and non-linear parabolic PDEs defined in cylindrical domains is a classical problem in control theory and the some 
	of the main contributions are in the references~\cite{fabre, fattorini, cost, ymanuvilov, lebeau}. On the other hand, the study of the controllability properties of free-boundary 
	problems for PDEs has not been much explored, although some important results have been obtained in the last years, specially for one-phase Stefan problems and variants;
	see~\cite{demarque, menezes, tributino1, limaco}. In \cite{borjan}, the authors study the controllability problem for free boundary viscous Burgers equation with one end moving point.
	
	In what respects the two-phase Stefan problem, the best result to our knowledge concerns {\it stabilization}.
	More precisely, it is proved in~\cite{koga} that, under some assumptions, there exist Neumann boundary controls and associated states $(u,v,\ell)$ defined for all~$t > 0$ such that 
        \[
	\lim_{t\rightarrow\infty}\|u(\cdot,t) - \mathcal{T}_m\|_{L^2(0,\ell(t))} = \lim_{t\rightarrow\infty}\|v(\cdot,t) - \mathcal{T}_m\|_{L^2(0,\ell(t))} = 0
	\quad\mbox{and}\quad \lim_{t\rightarrow\infty}\ell(t) = \ell_T ,
        \]
where $\mathcal{T}_m$ is a melting/solidification temperature.

	A natural question is whether or not it is possible to drive both the temperature and the interface to prescribed targets at a finite time.
	In this paper we give a positive partial answer to this question. 
	Recall that, in~\cite{fluid, tributino2, liu}, a similar problem was considered for a 1D fluid-structure problem, with the following equations on the interface:
        \[
      u(\ell(t),t) =v(\ell(t),t) = \ell'(t),\quad v_x(\ell(t),t)-u_x(\ell(t),t) = m\ell''(t)\quad\mbox{for}\quad t\in (0,T).
        \]
	
	In this paper, we deal with situations leading to new difficulties compared to previous works on free-boundary controllability.
	Let us discuss some of these differences:
	
\begin{itemize}

	\item Control of the interface.
	Here, we are also going to control the temperature and also the interface between liquid and solid regions.
	This will bring an extra difficulty.
	They main strategy will rely on linearization, then reformulation as an observability problem with a linear constraint and then resolution of a fixed-point equation.  

	\item Existence of two phases.
	Obviously, this complicates a lot the structure and properties of the state and requires an appropriate analysis.
	
\end{itemize}

	The rest of this paper is organized as follows.
	In~Section~\ref{reformulation}, we will reformulaqte the free-boundary problem as a nonlinear parabolic system in a cylindrical domain.
	In~Section~\ref{sec:positive}, we will present an improved observability inequality which leads to the null controllability for a related linearized system subject to a linear constraint.
	In Section~\ref{sec:nonlinear}, we will give a proof of~Theorem~\ref{twophase}.
	To this purpose, we will apply a fixed-point argument.
	Finally, in~Section~\ref{sec:comments}, we will present some additional comments and questions.


\section{Preliminaries}\label{preliminaries}

	
\subsection{Reformulation of the free-boundary problem} \label{reformulation}

	As a first step, let us find a suitable diffeomorphism $\Phi$ that transforms the free-boundary problem for the parabolic system~\eqref{am} into an equivalent problem for a nonlinear parabolic system in a cylindrical domain.

	To do that, let us fix a function $\ell\in H^1(0,T)\cap C^{1}((0,T])$ such that $\ell(t) \in (\ell_l,\ell_r)$ for all $t\in [0,T]$ and let us take $\sigma > 0$ sufficiently small such that
        \[
\ell_l + \sigma < \ell(t) - \sigma \ \text{ and } \ \ell(t) + \sigma < \ell_r-\sigma \ \text{ in } \ [0,T].
        \]
	Then, for any $\ell_l+2\sigma < y < \ell_r- 2\sigma$, we build a function $m(\cdot,y): \mathbb{R}\to \mathbb{R}$ by linear interpolation of the points $(\ell_l - \sigma,\ell_l - \sigma)$, $(\ell_l + \sigma,\ell_l + \sigma)$, $(y - \sigma, \ell_0- \sigma)$, $(y + \sigma, \ell_0+ \sigma)$, $(\ell_r - \sigma, \ell_r - \sigma)$ and $(\ell_r + \sigma,\ell_r + \sigma)$ and then extend the extreme segments toward infinity.
	Specifically, we have the following definition for $m(\cdot,y)$:
        \[
	m(x,y) :=
		\left\{
   			\begin{array}{lll}
   				x, 														&\mbox{if}&	x\leq \ell_l + \sigma,\\
   				\ell_l + \sigma + \dfrac{(\ell_l - \ell_0 + 2\sigma)(x-\ell_l- \sigma)}{\ell_l + 2\sigma - y}, 	&\mbox{if}&	\ell_l + \sigma < x < y - \sigma,\\
   				x - y + \ell_0, 												&\mbox{if}&	y - \sigma < x < y + \sigma,\\
   				\ell_0 + \sigma + \dfrac{(\ell_r - \ell_0 - 2\sigma)(x - y - \sigma)}{\ell_r - y - 2\sigma}, 	&\mbox{if}&	y + \sigma < x < \ell_r - \sigma,\\
   				x, 														&\mbox{if}&	x \geq \ell_r - \sigma.
   			\end{array}
		\right.
        \]

      	Let us now consider a function $\eta\in C^{\infty}(\mathbb{R})$ such that
        \[
	 \mbox{supp}~\eta \subset (-\sigma,\sigma)~~\int_{-\sigma}^{\sigma}\eta (x)\,d\,x = 1 ~~\text{and}~~\eta(x) =~\eta(-x)
	 \ \  \forall x \in \mathbb{R}.
        \]    
	Then, we can define a smooth function $G : \mathbb{R}\times (\ell_l + 2\sigma, \ell_r - 2\sigma)\mapsto\mathbb{R}$ as follows:
        \[
	G(x,y) := [\eta*m(\cdot,y)](x).
        \]
	A simple computation leads to the equalities
        \begin{equation}\label{am3}
	G(y,y) = \ell_0,\quad \partial_x G(y,y) = 1
        \end{equation}
     and
        \[
	\nabla G(x,y) = \left([\eta'*m(\cdot,y)](x),[\eta*\partial_{y}m(\cdot,y)](x)\right),
        \]
     where
        \[
\partial_{y}m(x,y) :=
	\left\{
   		\begin{array}{lll}
  			0, 													&\mbox{if} & x\leq \ell_l+ \sigma,\\
    			\dfrac{(\ell_l - \ell_0 + 2\sigma)(x-\ell_l- \sigma)}{(\ell_l + 2\sigma - y)^2}, 		&\mbox{if}& \ell_l + \sigma < x < y - \sigma,\\
   			-1, 													&\mbox{if} & y - \sigma < x < y + \sigma,\\
  			\dfrac{(\ell_r - \ell_0 - 2\sigma)(x - \ell_r + \sigma)}{(\ell_r- y - 2\sigma)^2}, 	&\mbox{if} & y + \sigma < x < \ell_r - \sigma,\\
   			0, 													&\mbox{if} & x \geq \ell_r - \sigma.
   		\end{array}
   	\right.
        \]
   	Let us introduce the mapping 
        \[
    	\Phi : Q \mapsto Q, \ \text{ with } \ \Phi(x,t) := \left(G(x,\ell(t)),t\right).
        \]
    	It can be seen that $\Phi$ is a diffeomorphism in~$Q$, coincides with the identity in the regions $(0,\ell_l+\sigma)\times (0,T)$ and $(\ell_r-\sigma,L)\times (0,T)$ and, moreover, $\Phi(\ell(t),t) = (L_0,t)$ for all $t \in [0,T]$. 
	Let us introduce the sets $Q_{0,l} := (0,\ell_0)\times (0,T)$ and $Q_{0,r} := (\ell_0,L)\times (0,T)$ and let us define $p : Q_{0,l} \to \mathbb{R}$ and $q : Q_{0,r} \to \mathbb{R}$, with
        \[
    	p(\xi,t) := u(x,t)=u( \Phi^{-1}(\xi,t))\,\,\,\mbox{and}\,\,\, q(\xi,t) := v(x,t)=v( \Phi^{-1}(\xi,t)),
        \]
    	where $(\xi,t) := \Phi(x,t)$.
	Then, we have that the couple $(p,q)$ satisfies:
        \begin{equation}\label{am4}
    	\left\{
    		\begin{array}{lll}
    			p_t - d^\ell_lp_{\xi\xi} + b^\ell_lp_{\xi} = h_l1_{\mathcal{\omega}_l} 			&\mbox{in} & Q_{0,l},\\
    			q_t - d^\ell_rq_{\xi\xi} + b^\ell_rq_{\xi} = h_r1_{\mathcal{\omega}_r} 			&\mbox{in} & Q_{0,r},\\
    			p(0,\cdot) =  q(L,\cdot) = 0 			&\mbox{on} & (0,T),\\
    			p(\cdot,0) = p_0 										&\mbox{in} & (0,\ell_0),\\
    			q(\cdot,0) = q_0 										&\mbox{in} & (\ell_0,L),\\
    			p(\ell_0,\cdot) = q(\ell_0,\cdot) = 0       &\mbox{on} & (0,T),\\
    			d_l p_{\xi}(\ell_0,t) - d_r q_{\xi}(\ell_0,t) = -\ell'(t) & \mbox{on} & (0,T).
    		\end{array}
    	\right.
        \end{equation}
    	where $p_0 = u_0\circ [G(\cdot,\ell_0)]^{-1}\in H_0^1(0,\ell_0)$, $q_0 = v_0\circ [G(\cdot,\ell_0)]^{-1}\in H_0^1(\ell_0,L)$ and
        \begin{equation}\label{coefficients_diffeom}
        \begin{alignedat}{2}    
    	&d^{\ell}_l (\cdot,t) := d_l\left(G_x\left([G(\cdot,\ell(t))]^{-1},\ell(t)\right)\right)^2,\\
	&d^{\ell}_r (\cdot,t) := d_r\left(G_x\left([G(\cdot,\ell(t))]^{-1},\ell(t)\right)\right)^2,\\
    	&b^{\ell}_l(\cdot,t) := G_y\left([G(\cdot,\ell(t))]^{-1},\ell(t)\right)\ell'(t) + d_lG_{xx}\left([G(\cdot,\ell(t))]^{-1},\ell(t)\right),\\
	&b^{\ell}_r(\cdot,t) := G_y\left([G(\cdot,\ell(t))]^{-1},\ell(t)\right)\ell'(t) + d_rG_{xx}\left([G(\cdot,\ell(t))]^{-1},\ell(t)\right).
\end{alignedat}
        \end{equation}
 \begin{rmk}\label{remark_appen_free}
        {\rm Since $\ell\in H^1(0,T)$, it is not difficult to deduce that $(d^\ell_l,d^\ell_r)\in L^{\infty}(Q_{0,l})\times  L^{\infty}(Q_{0,r})$ and $(b^\ell_l,b^\ell_r)\in L^2(0,T;L^{\infty}(0,\ell_0))\times  L^2(0,T;L^{\infty}(\ell_0,L))$. Moreover, there exist constants $K_1, K_2 > 0$, independent of $\ell$ and $T$, such that $\|(b^\ell_l,b^\ell_r)\|_{L^2(L^{\infty})\times L^2(L^{\infty})} \leq K_1\|\ell'\|_{L^2(0,T)} + K_2 T$.
        \Fin}
        \end{rmk}   
   
    	This way, we have that Theorem~\ref{twophase} is equivalent to prove a local controllability result for~\eqref{am4}.
	Actually, we will prove the following:
	
\begin{thm}\label{twophase1}
    	Let $\ell_T\in (\ell_l,\ell_r)$. Then there exists $\delta > 0$ such that, for any $p_0\in H_0^1(0,\ell_0)$ with $p_0 \geq 0$, any $q_0\in H_0^1(\ell_0,L)$ with $q_0 \leq 0$ and any 
	$\ell_0\in (\ell_l,\ell_r)$ satisfying 
        \[
	\|p_0\|_{H_0^1(0,\ell_0)} + \|q_0\|_{H_0^1(\ell_0,L)} + |\ell_0 -\ell_T|< \delta,
        \]
there exist controls $(h_l,h_r)\in L^2(\mathcal{O}_l)\times L^2(\mathcal{O}_r)$ and associated  solutions $(p,q,\ell)$ to  \eqref{am4} with
        \[
   	\left\{
   		\begin{array}{l}
     			\ell\in H^1(0,T)\cap C^{1}((0,T]), \ \ \ell(t) \in (\ell_l,\ell_r)\ \ \forall t\in [0,T], \\
     			\noalign{\smallskip}\displaystyle	
			p,p_{\xi},p_{t},p_{\xi\xi} \in L^2(Q_{0,l})~~\mbox{and}~~q,q_{\xi},q_{t},q_{\xi\xi} \in L^2(Q_{0,r}),\\
	\end{array}
  	 \right.
        \]
such that 
        \[
      \ell(T) = \ell_T,~~p(\cdot,T) = 0~~\mbox{in}~~(0,\ell_0)~~\mbox{and}~~q(\cdot,T) = 0~~\mbox{in}~~(\ell_0,L).   
        \]
\end{thm}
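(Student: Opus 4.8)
The plan is to linearize \eqref{am4} around the target $(0,0,\ell_T)$ and to recast the controllability problem as a fixed-point equation for the interface function $\ell$, using the constrained observability estimate of Section~\ref{sec:positive} to handle the linearized system. (The sign conditions $p_0\ge 0$, $q_0\le 0$ will play no role in the argument and are merely carried along.) First I would fix $\ell_T\in(\ell_l,\ell_r)$ and $\sigma>0$ as in Section~\ref{reformulation}, and for a radius $R>0$ to be chosen later introduce the set
\[
\mathcal K := \bigl\{\,\ell\in H^1(0,T):\ \ell(0)=\ell_0,\ \ell(T)=\ell_T,\ \|\ell-\ell_T\|_{H^1(0,T)}\le R,\ \ell_l<\ell(t)<\ell_r\ \ \forall\, t\in[0,T]\,\bigr\},
\]
which, for $|\ell_0-\ell_T|$ small, is a nonempty convex closed bounded subset of $H^1(0,T)$. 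For $\ell\in\mathcal K$ one forms the coefficients $d^\ell_l,d^\ell_r,b^\ell_l,b^\ell_r$ via \eqref{coefficients_diffeom}; since $G$ is smooth with $G(y,y)=\ell_0$ and $\partial_xG(y,y)=1$, for $R$ and $\delta$ small the $d^\ell_\cdot$ remain uniformly elliptic and close to $(d_l,d_r)$ in $L^\infty$, while Remark~\ref{remark_appen_free} yields $\|(b^\ell_l,b^\ell_r)\|_{L^2(L^\infty)\times L^2(L^\infty)}\le K_1R+K_2T$, uniformly in $\ell\in\mathcal K$.

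Next, for each $\ell\in\mathcal K$ I would consider the linearized system: \eqref{am4} with the coefficients $d^\ell_\cdot,b^\ell_\cdot$ frozen, the Stefan identity dropped, and $(p_0,q_0)$ the given data. The auxiliary ODE $-\widetilde\ell'(t)=d_lp_\xi(\ell_0,t)-d_rq_\xi(\ell_0,t)$, $\widetilde\ell(0)=\ell_0$, yields $\widetilde\ell(T)=\ell_T$ exactly when the controlled trajectory satisfies the scalar linear constraint $\int_0^T\bigl(d_lp_\xi(\ell_0,t)-d_rq_\xi(\ell_0,t)\bigr)\,dt=\ell_0-\ell_T$. Using the observability inequality of Section~\ref{sec:positive} adapted to this constraint, one obtains controls $(h_l,h_r)\in L^2(\mathcal O_l)\times L^2(\mathcal O_r)$ and a solution $(p,q)$ with $p(\cdot,T)=q(\cdot,T)=0$ fulfilling the constraint, together with
\[
\|h_l\|_{L^2(\mathcal O_l)}+\|h_r\|_{L^2(\mathcal O_r)}\le C_0\bigl(\|p_0\|_{H^1_0(0,\ell_0)}+\|q_0\|_{H^1_0(\ell_0,L)}+|\ell_0-\ell_T|\bigr),
\]
with $C_0$ independent of $\ell\in\mathcal K$ (here the uniform ellipticity and the smallness of the drifts enter). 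Moreover, by parabolic maximal regularity (since $h_\cdot 1_{\omega_\cdot}\in L^2$, $(p_0,q_0)\in H^1_0$, $d^\ell_\cdot$ is uniformly elliptic and continuous, and $b^\ell_\cdot\in L^2(0,T;L^\infty)$) one gets $p,p_\xi,p_t,p_{\xi\xi}\in L^2(Q_{0,l})$ and $q,q_\xi,q_t,q_{\xi\xi}\in L^2(Q_{0,r})$, with norms bounded by the same right-hand side, uniformly in $\ell\in\mathcal K$. To make the correspondence single-valued I would select the control of minimal $L^2$-norm among those achieving both null controllability and the constraint (equivalently, the one produced by the penalized HUM functional with the constraint incorporated through a multiplier); one then checks that this selection depends continuously on $\ell$ — or, to sidestep this point, one keeps the full convex set of admissible outputs and uses a set-valued fixed-point theorem.

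Then, since $(p,q)\in(L^2(0,T;H^2)\cap H^1(0,T;L^2))^2$, the traces $p_\xi(\ell_0,\cdot),q_\xi(\ell_0,\cdot)$ belong to $H^{1/4}(0,T)$, with norms controlled (uniformly in $\ell\in\mathcal K$) by the quantity above. I would define $\Lambda:\mathcal K\to H^1(0,T)$ by
\[
\Lambda(\ell)(t):=\ell_0-\int_0^t\bigl(d_lp_\xi(\ell_0,s)-d_rq_\xi(\ell_0,s)\bigr)\,ds .
\]
Then $\Lambda(\ell)(0)=\ell_0$ and, by the constraint, $\Lambda(\ell)(T)=\ell_T$; moreover $\|\Lambda(\ell)-\ell_T\|_{H^{5/4}(0,T)}\le C_1\delta$ and $\sup_{[0,T]}|\Lambda(\ell)-\ell_0|\le C_1\delta$. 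Fixing first $R$ small (so that the previous steps hold with uniform constants) and then $\delta$ so small that $C_1\delta\le R$ and $C_1\delta<\tfrac12\operatorname{dist}(\ell_T,\{\ell_l,\ell_r\})$, one gets $\Lambda(\mathcal K)\subset\mathcal K$; and since $H^{5/4}(0,T)$ embeds compactly into $H^1(0,T)$, $\Lambda$ is compact. For continuity, if $\ell_n\to\ell$ in $H^1(0,T)$ with $\ell_n\in\mathcal K$, then $\ell_n'\to\ell'$ in $L^2(0,T)$, so $d^{\ell_n}_\cdot\to d^\ell_\cdot$ in $L^\infty$ and $b^{\ell_n}_\cdot\to b^\ell_\cdot$ in $L^2(0,T;L^\infty)$, whence the (minimal-norm) controlled trajectories and their traces converge and $\Lambda(\ell_n)\to\Lambda(\ell)$ in $H^1(0,T)$. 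By Schauder's fixed-point theorem, $\Lambda$ has a fixed point $\ell^*\in\mathcal K$. The corresponding $(p,q)$ then solve \eqref{am4} with $\ell=\ell^*$: the identity $\ell^*=\Lambda(\ell^*)$ is precisely the Stefan condition $-(\ell^*)'=d_lp_\xi(\ell_0,\cdot)-d_rq_\xi(\ell_0,\cdot)$, while $\ell^*(T)=\ell_T$ and $p(\cdot,T)=q(\cdot,T)=0$ by construction. A bootstrap using parabolic smoothing away from $t=0$ then gives $\ell^*\in C^1((0,T])$ and the stated regularity of $(p,q)$, completing the proof of Theorem~\ref{twophase1} and hence of Theorem~\ref{twophase}.

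The step I expect to be the main obstacle is the constrained observability inequality underlying the second paragraph, with a constant uniform over $\ell\in\mathcal K$: this is exactly what Section~\ref{sec:positive} must provide, and the presence of the scalar linear constraint on the trajectory is what distinguishes it from a standard Carleman-based observability estimate. Within the present fixed-point scheme, the subtler points are the continuity of $\Lambda$ — which forces the fixed-point set to be taken in $H^1(0,T)$, so that $\ell_n'\to\ell'$ strongly and the frozen coefficients converge — and the uniform trace estimate $p_\xi(\ell_0,\cdot)\in H^{1/4}(0,T)$ that makes $\Lambda$ compact.
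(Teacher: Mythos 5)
Your overall architecture (freeze $\ell$, solve a constrained controllability problem for the linearized system using the improved observability of Section~\ref{sec:positive}, and recover the Stefan condition as a Schauder fixed-point equation for $\ell$) is the same as the paper's, but two steps of your plan do not go through as stated. First, you take the fixed-point set $\mathcal K$ to be an $H^1(0,T)$-ball, whereas all of Section~\ref{sec:positive} (the Carleman estimates, the uniform lower bound $\|\psi_\ell\|_{L^2(\mathcal O_l)}+\|\zeta_\ell\|_{L^2(\mathcal O_r)}\ge C_0$ of Proposition~\ref{proplimitation_coef}, and the improved inequality \eqref{am18}) is established only for $\ell\in C^1([0,T])$ with $\|\ell'\|_{C^0([0,T])}\le R$: by Remark~\ref{rmk:regul_coef} this is what makes the drift coefficients $b^\ell_{l},b^\ell_{r}$ continuous and uniformly bounded, which is used both in the Carleman argument and in the compactness--uniqueness proof of Proposition~\ref{proplimitation_coef}. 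For $\ell$ merely in an $H^1$-ball one only has $b^\ell\in L^2(0,T;L^\infty)$, and the uniform constrained observability constant you invoke ``uniformly over $\mathcal K$'' is not provided by the paper and is not a routine extension. This is precisely why the paper first upgrades the data to $W_0^{1,4}$ by parabolic smoothing, proves the $X^4$/H\"older trace estimates of Lemma~\ref{lemmaregul} (giving $\theta\in C^{1/8}([0,T])$), and runs Schauder in a $C^1([0,T])$-ball, where $\Lambda$ maps into $C^{1+1/8}$ and is compact; your $H^{1/4}$ trace bound and $H^{5/4}\hookrightarrow H^1$ compactness would not suffice if you moved the fixed-point set to $C^1$ to match the hypotheses of Section~\ref{sec:positive}.

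Second, and more importantly, you assume for each frozen $\ell$ an \emph{exact} null control satisfying the scalar constraint, selected (e.g.) as the minimal-norm control, and then assert its continuous dependence on $\ell$ (or propose a set-valued fixed point) without proof. This is the genuinely delicate point: exact constrained controls are obtained by minimizing a functional on a completion of $L^2(0,\ell_0)\times L^2(\ell_0,L)$ with respect to an $\ell$-dependent observation norm, and continuity of $\ell\mapsto(h_l^\ell,h_r^\ell)$ in that setting is not established anywhere and is nontrivial. The paper deliberately avoids it: Proposition~\ref{controlconstr} only produces $\varepsilon$-approximate null controls with the constraint $\mathcal L(T)=\ell_T$ satisfied exactly, via the coercive penalized functional $J_{\ell,\varepsilon}$ of \eqref{am100}, whose unique minimizer lives in the fixed space $L^2(0,\ell_0)\times L^2(\ell_0,L)$; this is what makes the continuity of $\Lambda_\varepsilon$ provable (convergence of the minimizers as $\ell_n\to\ell$), and exact null controllability of the nonlinear problem is recovered only at the very end, by the uniform estimate \eqref{am1012} and the limit $\varepsilon\to0$ after the fixed point (Theorem~\ref{twophase1_approx}). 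As written, your proposal is missing the argument that replaces this $\varepsilon$-penalization-plus-limit mechanism, so the continuity (or closed-graph) step of your Schauder scheme is a real gap rather than a technical remark.
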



\subsection{Well-posedness of the two-phase free-boundary problem}

	The aim of this section is to prove the local existence and uniqueness for the two-phase free-boundary problem \eqref{am}.
	More precisely, we have the following result:
 
\begin{prop} \label{appendix_thm_free}
	Let $L, T > 0$ and $\ell_l < \ell_0 < \ell_r$ be given.
	Then, the system \eqref{am} is locally well-posed. 
	In other words, for any $(h_l,h_r) \in L^{2}(\mathcal{O}_l)\times L^2(\mathcal{O}_r)$ and $(u_0,v_0) \in H_0^1(0,\ell_0)\times H_0^1(\ell_0,L)$, there exist a 
	time $0 < \widehat{T} \leq T$  and a unique strong solution to~\eqref{am} in the time interval $(0,\widehat{T})$ such that
	\[
\left\{
	\begin{array}{l}
        		\ell\in H^{1}(0,\widehat{T}), \ \ \ell(0) = \ell_0, \ \ \ell(t) \in (\ell_l,\ell_r) \ \ \forall t \in [0,\widehat{T}], \\
        		\noalign{\smallskip}\displaystyle	
        		u,\,u_x,\,u_t,\,u_{xx}\in L^2(\widehat{Q}_{l})~\text{and}~ v,\,v_x,\,v_t,\,v_{xx}\in L^2(\widehat{Q}_{r}), \\
   	\end{array}
\right.
	\]
	where~$\widehat{Q}_l :=\{(x,t)\in Q: t\in (0,\widehat{T}),\, x\in (0,\ell(t))\}$~and~$\widehat{Q}_r :=\{(x,t)\in Q: t\in (0,\widehat{T}),\,x\in (\ell(t),L)\}$.
\end{prop}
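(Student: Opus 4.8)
The plan is to localize in time and to solve the free-boundary problem by a fixed point on the \emph{interface}, via the change of variables of Section~\ref{reformulation}. Fix $R>0$ (to be chosen) and, for $\widehat T\in(0,T]$ small, let $B_R$ be the set of $\ell\in H^1(0,\widehat T)$ with $\ell(0)=\ell_0$ and $\|\ell-\ell_0\|_{H^1(0,\widehat T)}\le R$; for $\widehat T$ small enough (depending on $R$) every such $\ell$ satisfies $\|\ell-\ell_0\|_{L^\infty(0,\widehat T)}\le \widehat T^{1/2}R$, so that $\ell(t)$ stays in a fixed compact subinterval of $(\ell_l,\ell_r)$, the coefficients in \eqref{coefficients_diffeom} are well defined, $d^\ell_l,d^\ell_r$ are bounded from above and below by positive constants uniform over $B_R$ (because $G(\cdot,\ell_0)$ is the identity, hence $G_x(\cdot,\ell_0)\equiv1$), and $(b^\ell_l,b^\ell_r)\in L^2(0,\widehat T;L^\infty)$ is bounded over $B_R$ by Remark~\ref{remark_appen_free}. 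Given $\ell\in B_R$, the first observation is that system \eqref{am4} without its last line \emph{decouples} into two independent linear parabolic problems (one for $p$ on $(0,\ell_0)$, one for $q$ on $(\ell_0,L)$, each with homogeneous Dirichlet conditions), since the only interface condition left is $p(\ell_0,\cdot)=q(\ell_0,\cdot)=0$. Each admits a unique strong solution with $p,p_\xi,p_t,p_{\xi\xi}\in L^2(Q_{0,l})$ and $q,q_\xi,q_t,q_{\xi\xi}\in L^2(Q_{0,r})$; the one non-classical feature, that the drift lies only in $L^2(0,\widehat T;L^\infty)$, is absorbed by a Gronwall argument with the integrable weight $t\mapsto\|b^\ell_l(\cdot,t)\|_{L^\infty}^2$ in the energy estimates. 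I would then set
\[
\Lambda(\ell)(t):=\ell_0-\int_0^t\big(d_l\,p_\xi(\ell_0,s)-d_r\,q_\xi(\ell_0,s)\big)\,ds ,
\]
so that $\Lambda(\ell)(0)=\ell_0$ and any fixed point $\ell^\ast=\Lambda(\ell^\ast)$, together with its $(p,q)$, solves the full system \eqref{am4}; transferring back by $\Phi^{-1}$ then produces a strong solution of \eqref{am} with the claimed regularity.

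Next I would check that $\Lambda$ is a contraction of $B_R$ into itself for $\widehat T$ small. The key point is the estimate of the flux trace: by the one-dimensional Gagliardo--Nirenberg inequality $|p_\xi(\ell_0,t)|^2\le C\|p_\xi(\cdot,t)\|_{L^2}\|p_{\xi\xi}(\cdot,t)\|_{L^2}+C\|p_\xi(\cdot,t)\|_{L^2}^2$, and integrating in $t$ together with $\|p_\xi\|_{L^2(0,\widehat T;L^2)}\le\widehat T^{1/2}\|p_\xi\|_{L^\infty(0,\widehat T;L^2)}$ and the parabolic bounds on $p$, one gets $\|p_\xi(\ell_0,\cdot)\|_{L^2(0,\widehat T)}\le C\widehat T^{1/4}(\|u_0\|_{H_0^1}+\|h_l\|_{L^2})$ (and analogously for $q$). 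Hence $\|\Lambda(\ell)-\ell_0\|_{H^1(0,\widehat T)}\le C\widehat T^{1/4}(\|u_0\|_{H_0^1}+\|v_0\|_{H_0^1}+\|h_l\|_{L^2}+\|h_r\|_{L^2})$, so one first fixes $R$ and then $\widehat T$ small enough that $\Lambda(B_R)\subset B_R$ (and $\Lambda(\ell)(t)\in(\ell_l,\ell_r)$). For $\ell^1,\ell^2\in B_R$, the difference $w:=p^1-p^2$ (and $\tilde w:=q^1-q^2$) solves a parabolic problem with zero initial datum and right-hand side built from $(d^{\ell^1}-d^{\ell^2})p^2_{\xi\xi}$ and $(b^{\ell^1}-b^{\ell^2})p^2_\xi$; since $G$ is smooth, $\|d^{\ell^1}-d^{\ell^2}\|_{L^\infty(Q)}\le C\|\ell^1-\ell^2\|_{L^\infty(0,\widehat T)}$ and $\|b^{\ell^1}-b^{\ell^2}\|_{L^2(0,\widehat T;L^\infty)}\le C\|\ell^1-\ell^2\|_{H^1(0,\widehat T)}$. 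Energy estimates (testing by $w$ and by $-w_{\xi\xi}$, again Gronwall for the drift) give $\|w\|_{L^\infty(0,\widehat T;H^1)}+\|w\|_{L^2(0,\widehat T;H^2)}\le C\|\ell^1-\ell^2\|_{H^1(0,\widehat T)}$, and then the same Gagliardo--Nirenberg trace argument produces the gain $\|w_\xi(\ell_0,\cdot)\|_{L^2(0,\widehat T)}\le C\widehat T^{1/4}\|\ell^1-\ell^2\|_{H^1(0,\widehat T)}$, so that $\|\Lambda(\ell^1)-\Lambda(\ell^2)\|_{H^1(0,\widehat T)}\le C\widehat T^{1/4}\|\ell^1-\ell^2\|_{H^1(0,\widehat T)}$. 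For $\widehat T$ small this is a contraction, and Banach's theorem gives a unique $\ell^\ast\in B_R$, hence existence.

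For uniqueness, given any strong solution $(u,v,\ell)$ of \eqref{am} on $[0,\widehat T]$ with the stated regularity, its interface has finite flux traces, so $\|\ell-\ell_0\|_{H^1(0,\tau)}\to0$ as $\tau\to0$ and hence $\ell\in B_R$ on a possibly shorter interval, on which it is a fixed point of $\Lambda$ and must coincide with $\ell^\ast$; a standard open--closed continuation argument propagates the coincidence up to $\widehat T$, after which $(u,v)$ is determined. The main obstacle is precisely the low time-regularity of $\ell'$: it enters the transformed equations through the drift $b^\ell$, which lies only in $L^2(0,\widehat T;L^\infty)$, so the difference $w$ cannot be made small in $H^{2,1}$ and is only bounded by $\|\ell^1-\ell^2\|_{H^1}$. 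The way around it is the observation that only the boundary flux $w_\xi(\ell_0,\cdot)$ feeds back into $\Lambda$, and that the Gagliardo--Nirenberg point-trace inequality extracts a positive power of $\widehat T$ from that particular quantity.
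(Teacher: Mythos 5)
Your proposal is correct and follows essentially the same route as the paper: transfer to the cylinder via $\Phi$, solve the decoupled linear problems for $(p,q)$ given $\ell$, and run Banach's fixed point on the map $\ell\mapsto \ell_0-\int_0^t\left[d_lp_\xi(\ell_0,\tau)-d_rq_\xi(\ell_0,\tau)\right]d\tau$ over a small time interval, with the contraction coming from a $\widehat T^{1/4}$ gain on the boundary flux. The only difference is cosmetic: you extract that factor by a Gagliardo--Nirenberg point-trace interpolation, while the paper uses continuity of the trace into $H^{1/4}(0,T)$ together with the embedding $H^{1/4}(0,T)\hookrightarrow L^4(0,T)$ and H\"older in time.
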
 

 	Thanks to the diffeomorphism $\Phi : Q \mapsto Q$, introduced in Section \ref{reformulation}, Proposition \ref{appendix_thm_free} is equivalent to the local existence and uniqueness of~\eqref{am4}.
	More precisely, Proposition~\ref{appendix_thm_free} is an immediate consequence of the following result:

\begin{prop}\label{appendix_thm_free2}
	Let the conditions of~Proposition~\ref{appendix_thm_free} be satisfied.
	Then, the nonlinear system~\eqref{am4} is locally well-posed, i.e.~if $(h_l,h_r) \in L^{2}(\mathcal{O}_l)\times L^2(\mathcal{O}_r)$ and~$(p_0,q_0) \in H_0^1(0,\ell_0)\times H_0^1(\ell_0,L)$ are fixed, there exist $\widehat{T} \in (0,T)$  and a unique strong solution in~$(0,\widehat{T})$ such that
\[
\left\{
   	    \begin{array}{l}
        \ell\in H^{1}(0,\widehat{T}),~\ell(0) = \ell_0,~ \ \ell(t) \in (\ell_l,\ell_r)~~\forall\,\, t\in [0,\widehat{T}],\\
        \noalign{\smallskip}\displaystyle	
        p,\,p_{\xi},\,p_t,\,p_{\xi\xi}\in L^2(\widehat{Q}_{0,l})~\text{and}~ q,\,q_{\xi},\,q_t,\,q_{\xi\xi}\in L^2(\widehat{Q}_{0,r}),
   		\end{array}
\right.
\]
	where $\widehat{Q}_{0,l} := (0,\ell_0) \times (0,\widehat{T})$~and~$\widehat{Q}_{0,r} := (\ell_0,L) \times (0,\widehat{T})$. 
\end{prop}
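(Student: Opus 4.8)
The plan is to prove Proposition~\ref{appendix_thm_free2} by a Banach fixed-point argument, treating the nonlinear system \eqref{am4} as a perturbation of a decoupled pair of linear parabolic problems. The unknowns are the triple $(p,q,\ell)$, but the coupling enters only through the coefficients $d^\ell_l, d^\ell_r, b^\ell_l, b^\ell_r$ (which depend on $\ell$ and $\ell'$ via the fixed diffeomorphism data $G$) and through the Stefan condition $d_l p_\xi(\ell_0,t) - d_r q_\xi(\ell_0,t) = -\ell'(t)$. So I would set up the iteration on $\ell$: given a candidate $\bar\ell \in H^1(0,\widehat T)$ with $\bar\ell(0)=\ell_0$ and $\bar\ell(t)\in(\ell_l,\ell_r)$, form the coefficients $d^{\bar\ell}_\bullet, b^{\bar\ell}_\bullet$, solve the two linear parabolic equations for $(p,q)$ with the homogeneous Dirichlet and interface conditions, and then \emph{define} the next iterate by integrating the Stefan relation: $\ell(t) := \ell_0 - \int_0^t \big(d_l p_\xi(\ell_0,s) - d_r q_\xi(\ell_0,s)\big)\,ds$. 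A fixed point of this map solves \eqref{am4}.

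The key steps, in order. First, linear well-posedness: for $\bar\ell$ fixed, Remark~\ref{remark_appen_free} gives $d^{\bar\ell}_\bullet \in L^\infty$ (bounded below away from zero for $\widehat T$ small, since $G_x=1+O(\widehat T)$ near $\ell_0$) and $b^{\bar\ell}_\bullet \in L^2(0,\widehat T;L^\infty)$; standard parabolic theory for equations with such coefficients and $H^1_0$ initial data yields unique strong solutions $p\in L^2(0,\widehat T;H^2)\cap H^1(0,\widehat T;L^2)$, similarly for $q$, with norms bounded in terms of $\|p_0\|_{H^1_0}, \|q_0\|_{H^1_0}, \|h_l\|_{L^2}, \|h_r\|_{L^2}$ and the coefficient norms. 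Second, the trace estimate: by the embedding $L^2(H^2)\cap H^1(L^2)\hookrightarrow C([0,\widehat T];H^1)$ and the one-dimensional trace inequality, $p_\xi(\ell_0,\cdot), q_\xi(\ell_0,\cdot)\in C([0,\widehat T])$, hence $\ell\in C^1([0,\widehat T])\subset H^1(0,\widehat T)$ with $\|\ell'\|_{L^2(0,\widehat T)}\le C\,\widehat T^{1/2}$ times the solution bound. Third, invariance of a small ball: choosing $\widehat T$ small enough, the map sends $\{\bar\ell: \|\bar\ell'\|_{L^2}\le R,\ \bar\ell(0)=\ell_0\}$ into itself and keeps $\ell(t)$ within $(\ell_l,\ell_r)$, because $|\ell(t)-\ell_0|\le \widehat T^{1/2}\|\ell'\|_{L^2}$ is small. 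Fourth, contraction: given two iterates $\bar\ell_1, \bar\ell_2$, estimate the difference of the coefficients in terms of $\|\bar\ell_1-\bar\ell_2\|_{H^1}$ (using smoothness of $G$ in $y$ and Lipschitz dependence of $[G(\cdot,y)]^{-1}$ on $y$), feed this into the linear estimate to bound $\|p_1-p_2\|, \|q_1-q_2\|$ in the solution norm, take traces, and conclude $\|\ell_1-\ell_2\|_{H^1}\le C\,\widehat T^{\alpha}\|\bar\ell_1-\bar\ell_2\|_{H^1}$ for some $\alpha>0$; for $\widehat T$ small this is a strict contraction. Banach's fixed-point theorem then gives existence and uniqueness in the ball, and a separate short argument (comparing any two strong solutions on a common small interval via the same estimates) upgrades this to uniqueness in the full strong-solution class. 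Finally, Proposition~\ref{appendix_thm_free} follows by transporting $(p,q,\ell)$ back through $\Phi^{-1}$, using that $\Phi$ is a diffeomorphism coinciding with the identity near the boundary, so that the regularity classes for $(u,v)$ on $\widehat Q_l, \widehat Q_r$ match those of $(p,q)$ on $\widehat Q_{0,l}, \widehat Q_{0,r}$.

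The main obstacle is the fourth step — the contraction estimate for the coefficient map $\bar\ell \mapsto (d^{\bar\ell}_\bullet, b^{\bar\ell}_\bullet) \mapsto (p,q) \mapsto \ell$. The delicate point is that $b^\ell_\bullet$ contains the factor $\ell'(t)$, so differences of $b$-coefficients involve $\bar\ell_1' - \bar\ell_2'$ in $L^2(0,\widehat T;L^\infty)$, and one must verify that the loss from this low-regularity term is compensated by a positive power of $\widehat T$; this is exactly where the $L^2(L^\infty)$ structure identified in Remark~\ref{remark_appen_free} and the factor $T$ in the bound $\|(b^\ell_l,b^\ell_r)\|\le K_1\|\ell'\|_{L^2}+K_2T$ are used, so that shrinking $\widehat T$ buys contraction. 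A secondary technical nuisance is keeping track of the dependence of $[G(\cdot,y)]^{-1}$ (and its $x$- and $y$-derivatives, needed for $G_{xx}$ and $G_y$ in \eqref{coefficients_diffeom}) on $y$ in a quantitative, Lipschitz way; this is routine given the explicit piecewise-linear formula for $m(\cdot,y)$ and the mollification by $\eta$, but must be done carefully near the junction points $y\pm\sigma$.
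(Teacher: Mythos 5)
Your overall architecture is exactly the paper's: freeze $\ell$, solve the linear parabolic pair \eqref{am4}$_1$--\eqref{am4}$_6$ in $X^{T}_l\times X^{T}_r = \bigl(L^2(H^2)\cap H^1(L^2)\bigr)^{\,2}$, define the next iterate by integrating the Stefan relation, and run Banach's fixed-point theorem on a small time interval, with the contraction coming from Lipschitz dependence of the coefficients \eqref{coefficients_diffeom} on $\ell$ and a vanishing factor as $\widehat T\to 0$. That is the same decomposition, the same map $\Lambda$, and the same two verifications (invariance and contraction) as in the paper.

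There is, however, one step in your write-up that would fail as stated: the trace estimate. From $p\in L^2(0,\widehat T;H^2)\cap H^1(0,\widehat T;L^2)\hookrightarrow C([0,\widehat T];H^1)$ you only get $p_\xi(\cdot,t)\in L^2(0,\ell_0)$ for each $t$, which does not allow pointwise evaluation of $p_\xi$ at $\xi=\ell_0$; so neither $p_\xi(\ell_0,\cdot)\in C([0,\widehat T])$ nor $\ell\in C^1([0,\widehat T])$ follows, and your claimed bound $\|\ell'\|_{L^2(0,\widehat T)}\le C\,\widehat T^{1/2}\times(\text{solution bound})$ implicitly uses an $L^\infty$-in-time control of the trace that you do not have. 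The correct (and what the paper uses) statement is the parabolic trace theorem: $(p,q)\mapsto\bigl(p_\xi(\ell_0,\cdot),q_\xi(\ell_0,\cdot)\bigr)$ is continuous from $X^{T}_l\times X^{T}_r$ into $H^{1/4}(0,T)\times H^{1/4}(0,T)$; combined with $H^{1/4}(0,T)\hookrightarrow L^4(0,T)$ and H\"older in time, this yields $\|\mathcal{L}'_\ell\|_{L^2(0,\widehat T)}\le C\,\widehat T^{1/4}\,C(T,R)^{1/2}\,\Pi$ (a power $\widehat T^{1/4}$, not $\widehat T^{1/2}$), and $\ell\in H^1(0,\widehat T)$ only --- which is all the proposition requires. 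With this substitution your invariance and contraction steps go through exactly as in the paper; the rest of your plan (coefficient Lipschitz estimates via the explicit $G$, uniqueness by comparing two solutions with the same estimates, and transport back through $\Phi^{-1}$ for Proposition~\ref{appendix_thm_free}) matches the paper's argument.
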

\begin{proof}
 	First, let us introduce the spaces
\[
	\begin{array}{l} \displaystyle
  	X^{T}_l \!:=\! L^2(0,\! T;\! H^2(0,\! \ell_0)) \!\cap\! H^1(0,\! T;\! L^2(0,\! \ell_0)) \ \text{ and } \
	X^{T}_r \!:=\! L^2(0,\! T;\! H^2(\ell_0,\! L)) \!\cap\! H^1(0,\! T;\! L^2(\ell_0,\! L)).
	\end{array}
\]

   For each fixed $\ell \in H^1(0,T)$, we can use the Faedo-Galerkin method to get a unique  strong solution~$(p^{\ell},q^{\ell}) \in X^{T}_l \times X^{T}_r$ to~\eqref{am4}$_1$--\eqref{am4}$_6$.
   Moreover, thanks to Remark~\ref{remark_appen_free}, we find a positive constant $C_1$, independent of $p_0, q_0, h_l, h_r, \ell$ and $T$, 
   such that
\begin{equation}\label{estimates_appendix_free}
	\|(p,q)\|^2_{X^{T}_l\times X^{T}_r} \leq C_1\left(1+K+\sqrt{T}\right)\left[1+\left(K+ \sqrt{T}\right)K + T\right]
	e^{C_1\left(1 +  \sqrt{T}\right)\left(K +  \sqrt{T}\right)}\Pi^2,
\end{equation}
	where $K := \|\ell'\|_{L^2(0,T)}$ and $\Pi:= \|(p_0,q_0)\|_{H_0^1\times H_0^1} +  \|(h_l,h_r)\|_{L^2\times L^2}$.
    
   	Now, let us assume that $\ell_l < \widehat{\ell}_l <\ell_0< \widehat{\ell}_r < \ell_r$ and $R > 0$ and let us introduce the set
 $$
     \mathcal{A}_{R,T}:= \{\ell\in H^{1}(0,T): \widehat{\ell}_l \leq \ell(t) \leq \widehat{\ell}_r,~\forall t\in [0,T],~~\ell(0) = \ell_0,~ \|\ell'\|_{L^2(0,T)} \leq R\}
  $$     
and the mapping $\Lambda : \mathcal{A}_{R,T} \mapsto H^1(0,T)$, with
  \begin{equation*}\label{fixedpointmap}
	      \Lambda(\ell)= \mathcal{L}_\ell,~~\hbox{and}~~\mathcal{L}_\ell(t) := \ell_0 - \displaystyle\int_{0}^{t}\left[d_l p^{\ell}_{\xi}(\ell_0, \tau ) - d_r q^{\ell}_{\xi}(\ell_0, \tau )\right]\,d\tau,
\end{equation*}
	where $(p^{\ell},q^{\ell}) \in X^{T}_l\times X^{T}_r$ is the unique strong solution to \eqref{am4}$_1$--\eqref{am4}$_6$.
	It is not difficult to see that $\mathcal{A}_{R,T}$ is a non-empty, closed and convex subset of $H^{1}(0,T)$.
	
	Let us check that, for some $0 < \widehat{T} \leq T$, $\Lambda$ satisfies the assumptions of Banach's Fixed-Point Theorem in~$\mathcal{A}_{R,\widehat{T}}$:
	
\begin{itemize}
	  \item {\it There exists $\widetilde{T} \in (0,T]$ such that
	  \begin{equation} \label{new-tau}
\Lambda(\mathcal{A}_{R,\tau}) \subset\mathcal{A}_{R,\tau} \quad \forall \tau \in (0,\widetilde{T}].
	  \end{equation}}
	  
	  Indeed, $ \mathcal{L}_\ell(0) = \ell_0$.
	  Let us introduce
\begin{equation}\label{estimate_appendix_fixedpoint_1}
        C(T,R):=C_1\left(1+R+\sqrt{{T}}\right)\left[1+\left(R+ \sqrt{{T}}\right)R + {T}\right]
	e^{C_1\left(1 +  \sqrt{{T}}\right)\left(R +  \sqrt{{T}}\right)}.
\end{equation}       
         Then, using the H\"older inequality, \eqref{estimates_appendix_free} and~\eqref{estimate_appendix_fixedpoint_1}, we see that, for some~$C_2 > 0$ independent of $T$, one has:
\begin{align*}
  |\mathcal{L}_\ell(t) - \ell_0| & \leq \int_{0}^{t}|d_lp^{\ell}_{\xi}(\ell_0,\tau) - d_rq^{\ell}_{\xi}(\ell_0,\tau)|\,d\,\tau\\
                & \leq C_2\widetilde{T}^{1/2}
                   \|(p_{\xi}^{\ell}(\ell_0,\cdot),q_{\xi}^{\ell}(\ell_0,\cdot))\|_{L^2(0,\widetilde{T})\times L^2(0,\widetilde{T}) }\\
                & \leq C_2 \widetilde{T}^{1/2} \|(p^{\ell},q^{\ell})\|_{X^{{T}}_l\times X^{{T}}_r}\\
                & \leq C_2 \widetilde{T}^{1/2}C(T,R)^{1/2} \, \Pi
\end{align*}
for all $t \in [0,\widetilde{T}]$ and any~$\widetilde{T} \in (0,T]$.

  	On the other hand, since the trace operator $\Gamma:X^T_l\times X^T_r\mapsto H^{1/4}(0,T)\times H^{1/4}(0,T)$ defined by~$\Gamma(p,q) := (p^\ell_\xi(\ell_0,\cdot),q^\ell_\xi(\ell_0,\cdot))$ is continuous, thanks to the continuity of the embedding $H^{1/4}(0,T) \hookrightarrow L^4(0,T)$, we find~$C_3 > 0$ (independent of $T$) such that 
 $$
 	\|\mathcal{L}_\ell^\prime\|_{L^2(0,\widetilde{T})} \leq C_3\widetilde{T}^{1/4}C(T,R)^{1/2} \,\Pi.
$$ 
	It follows easily from the inequalities above that, if~$\widetilde{T}$ is sufficiently small, \eqref{new-tau} holds.

\item {\it There exists $\widehat{T} \in (0,T]$ such that $\Lambda : \mathcal{A}_{R,\widehat{T}} \mapsto \mathcal{A}_{R,\widehat{T}}$ is a contraction.}

   	It can be proved that there exists~$D_1 = D_1(\tau)$ such that $D_1(\tau) \rightarrow 0 $ as~$\tau \rightarrow 0$ and
     \begin{equation}\label{uniqueness_freeboundary_1}
\|\mathcal{L}_{\ell_1} -\mathcal{L}_{\ell_2}\|_{H^1(0,\tau)} \leq D_1(\tau)\left(\|p^{\ell_1} - p^{\ell_2} \|_{X^{\tau}_l} + \|q^{\ell_1} - q^{\ell_2} \|_{X^{\tau}_r}\right) \quad \forall \ell_1,\ell_2 \in \mathcal{A}_{R,\tau}.
     \end{equation}
     
    Furthermore, using standard energy estimates, we get a positive~$D_2 = D_2(\tau)$, similar to~\eqref{estimate_appendix_fixedpoint_1}, such that 
     \begin{equation}\label{uniqueness_freeboundary_2}
    \|p^{\ell_1} - p^{\ell_2} \|_{X^{\tau}_l} + \|q^{\ell_1} - q^{\ell_2} \|_{X^{\tau}_r} \leq D_2(\tau)\|(F^{\ell}_l,F^{\ell}_r)\|_{L^2(\widetilde{Q}_{0,l})\times L^2(\widetilde{Q}_{0,r})},
     \end{equation}
     where  
    \begin{equation}\label{uniqueness_freeboundary}
     	F^{\ell}_{l} :=  (d^{\ell_1}_l - d^{\ell_2}_l)p^{\ell_2}_{\xi\xi} - (b^{\ell_1}_l - b_l^{\ell_2})p^{\ell_2}_{\xi}
     	~~\mbox{and}~~
	F^{\ell}_{r} :=  (d^{\ell_1}_r - d^{\ell_2}_r)q^{\ell_2}_{\xi\xi} - (b^{\ell_1}_r - b_r^{\ell_2})q^{\ell_2}_{\xi}
    \end{equation} 
    	and $\widetilde{Q}_{0,l} := (0,\ell_0) \times (0,\tau)$~and~$\widetilde{Q}_{0,r} := (\ell_0,L) \times (0,\tau)$.
     
     Then, using the fact that~$G$ and its inverse $G^{-1}$
     (defined in~Section~\ref{reformulation}) are smooth functions, we see that there exists $D_3 = D_3(\tau)$
     ($D_3(s)$ is bounded for $0\leq s\leq T$)  such that 
     \begin{equation}\label{uniqueness_freeboundary_3}
\|(F^{\ell}_l,F^{\ell}_r)\|_{L^2(\widetilde{Q}_{0,l})\times L^2(\widetilde{Q}_{0,r})} \leq D_3(\tau)\|(p^{\ell_2},q^{\ell_2})\|_{X^{\tau}_l\times X^{\tau}_r}\|\ell_1 - \ell_2\|_{H^1(0,\tau)}. 
      \end{equation}
      Combining \eqref{uniqueness_freeboundary_1}--\eqref{uniqueness_freeboundary_3}, we deduce that
\begin{equation}\label{uniqueness_freeboundary_4}
	\|\Lambda(\ell_1) - \Lambda(\ell_2)\|_{H^1(0,\tau)}
	\leq E(\tau)\|\ell_1 - \ell_2\|_{H^1(0,\tau)}, 
\end{equation}
     where $E(\tau) := D_1(\tau)D_2(\tau)D_3(\tau)\|(p^{\ell_2},q^{\ell_2})\|_{X^{\tau}_l\times X^{\tau}_r}$.
     Since $(p^{\ell_2},q^{\ell_2})$ is uniformly bounded in~$X^{\tau}_l\times X^{\tau}_r$ for all $0<\tau\leq \widetilde{T}$ provided~$\ell_2 \in \mathcal{A}_{R,\tau}$, we find that $E(s) \rightarrow 0 $ as~$s \rightarrow 0$.
     
     As a consequence, there exists $\widehat{T} \in (0,\widetilde{T}]$ such that $\Lambda : \mathcal{A}_{R,\widehat{T}} \mapsto \mathcal{A}_{R,\widehat{T}}$ is a contraction.
%
%
%
%
%
     \end{itemize}
 
     As a consequence, $\Lambda$ possesses exactly one fixed-point in~$\mathcal{A}_{R,\widehat{T}}$.
     This ends the proof.
\end{proof}

  \section{Approximate controllability of a linearized system}\label{sec:positive} 
   
 	In this section, we are going to complete a first step in the proof of Theorem \ref{twophase1}. More precisely, we are going to prove a controllability result for a suitable (natural) 
	linearization of  \eqref{am4}.
 
	To do this, let us fix $\ell\in C^{1}([0,T])$ with $\ell(0) = \ell_0$ and $\ell([0,T])\subset (\ell_l,\ell_r)$ and let us consider the system: 
        \begin{equation}\label{am7}
      	\left\{
      		\begin{array}{lll}
      			M^{\ell}_{l}(p) = h_l1_{\mathcal{\omega}_l} 					&\mbox{in}&	Q_{0,l},\\
      			M^{\ell}_{r}(q) = h_r1_{\mathcal{\omega}_r} 					&\mbox{in}& 	Q_{0,r},\\
      			p(0,\cdot) = p(\ell_0,\cdot) = q(\ell_0,\cdot) = q(L,\cdot) = 0 	&\mbox{on}& 	(0,T),\\
      			p(\cdot,0) = p_0 								&\mbox{in}& 	(0,\ell_0),\\
      			q(\cdot,0) = q_0 								&\mbox{in}& 	(\ell_0,L),
      		\end{array}
      	\right.
        \end{equation}
	where the operators $M^{\ell}_{l}$ and $M^{\ell}_{r}$  are respectively defined by:
           \[
     	M^{\ell}_{l}(p) := p_t - d^{\ell}_lp_{\xi\xi} + b^{\ell}_lp_{\xi}\quad\mbox{and}\quad M^{\ell}_{r}(q) := q_t - d^{\ell}_rq_{\xi\xi} + b^{\ell}_rq_{\xi}.
        \]
	Also, let us introduce the function $\mathcal{L} : [0,T] \mapsto \mathbb{R}$ given by
        \[
       	\mathcal{L}(t) := \ell_0 - \int_{0}^{t}\left[d_l p_{\xi}(\ell_0, \tau ) - d_r q_{\xi}(\ell_0, \tau )\right]d \tau. 
        \]
        
\begin{remark}\label{rmk:regul_coef}
   	{\rm Using the fact that $G$ and $G^{-1}$ are smooth and~$\ell\in C^1([0,T])$, we can prove that~$d^{\ell}_{l}$ and~$b^{\ell}_{l}$ belong, respectively, to~$C^{1}(\overline{Q}_{0,l})$ and~$C^{0}(\overline{Q}_{0,l})$.
	Furthermore, the second spatial derivative of $d^{\ell}_{l}$ and the first spatial derivative of $b^{\ell}_{l}$ are functions in $C^{0}(\overline{Q}_{0,l})$.
	The same can be obtained for the coefficients $d^{\ell}_{r}$ and $b^{\ell}_{r}$.
	\Fin}
\end{remark}

       	The main goal of this section is to obtain a (robust) approximate controllability result for  \eqref{am7} subject to the linear constraint 
        \begin{equation}\label{am8}
       	\mathcal{L}(T) = \ell_T.
        \end{equation}
     	In other words, we want to find controls $(h_l,h_r)\in  L^2(\mathcal{O}_l)\times L^2(\mathcal{O}_r)$ 
	such that the associated solutions to~\eqref{am7} satisfy~\eqref{am8}. 
	
	Let us first reformulate~\eqref{am8}. 
	Thus, consider the augmented adjoint system
        \begin{equation}\label{am9}
       	\left\{
       		\begin{array}{lll}
       			(M_{l}^\ell)^{*}(\psi) = 0 					&\mbox{in}& 	Q_{0,l},\\
       			(M_{r}^\ell)^{*}(\zeta) = 0 					&\mbox{in} & 	Q_{0,r},\\
       			\psi(0,\cdot) = 0, \quad \psi(\ell_0,\cdot) = 1 		&\mbox{on} & 	(0,T),\\
      	 		\zeta(\ell_0,\cdot) = 1,\quad \zeta(L,\cdot) = 0, 	&\mbox{on} & 	(0,T),\\
       			\psi(\cdot,T) = 0 						&\mbox{in} & 	(0,\ell_0),\\
       			\zeta(\cdot,T) = 0 						&\mbox{in} & 	(\ell_0,L).
       		\end{array}
      	\right.
        \end{equation}
        It is not difficult to check that~\eqref{am9} possesses a unique weak solution $(\psi_\ell, \zeta_\ell)$, with
        	        \[
      		\psi_{\ell}  \in L^2(0,T;H_0^1(0,\ell_0))\cap H^1(0,T;H^{-1}(0,\ell_0)),
	        \]
	        \[
		\zeta_{\ell}\in L^2(0,T;H_0^1(\ell_0,L))\cap H^1(0,T;H^{-1}(\ell_0,L)).
	        \]
	
	A crucial property of~$(\psi_{\ell},\zeta_{\ell})$ is the following:
	
\begin{prop}\label{proplimitation_coef}
	Given $R > 0$, let us consider the set $\mathcal{B}_R := \{\ell\in C^1([0,T]);~~\|\ell'\|_{C^0([0,T])} \leq R\}$. Then, there exists a positive constant $C_0$, only depending on~$\ell_0$, $\ell_l$, $\ell_r$, $\omega_l$, $\omega_r$, $T$ and $R$ such that, for any~$\ell \in \mathcal{B}_R$, one has:
        \[
      \|\psi_{\ell}\|_{L^2(\mathcal{O}_l)} + \|\zeta_{\ell}\|_{L^2(\mathcal{O}_r)} \geq C_0 .
        \]
\end{prop}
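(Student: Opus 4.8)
The plan is to argue by contradiction through a compactness--uniqueness scheme, of the type announced in the introduction. Suppose the conclusion fails; then for every $n\in\NN$ there is $\ell_n\in\mathcal{B}_R$ with
$$
\|\psi_{\ell_n}\|_{L^2(\mathcal{O}_l)}+\|\zeta_{\ell_n}\|_{L^2(\mathcal{O}_r)}<\tfrac1n ,
$$
where $(\psi_{\ell_n},\zeta_{\ell_n})$ is the weak solution of~\eqref{am9} associated with $\ell=\ell_n$. The first step is to obtain bounds on $(\psi_{\ell_n},\zeta_{\ell_n})$ that are uniform in $n$. Subtracting fixed lifts of the non-homogeneous traces (for instance, replacing $\psi$ by $\psi-\xi/\ell_0$ and $\zeta$ by $\zeta-(L-\xi)/(L-\ell_0)$) and using the uniform bounds on $d^{\ell_n}_l,d^{\ell_n}_r,b^{\ell_n}_l,b^{\ell_n}_r$ and their $\xi$-derivatives that follow from $\ell_n\in\mathcal{B}_R$ and Remark~\ref{rmk:regul_coef}, standard parabolic energy estimates give
$$
\|\psi_{\ell_n}\|_{L^2(0,T;H^1(0,\ell_0))}+\|\partial_t\psi_{\ell_n}\|_{L^2(0,T;H^{-1}(0,\ell_0))}\le C(R,T),
$$
uniformly in $n$, together with the analogous bound for $\zeta_{\ell_n}$.

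Next I would extract convergent subsequences. Since $\mathcal{B}_R$ is bounded in $W^{1,\infty}(0,T)$, by Arzel\`a--Ascoli and weak-$*$ compactness one may assume, along a subsequence (not relabeled), that $\ell_n\to\ell^{*}$ uniformly on $[0,T]$ and $\ell_n'\rightharpoonup(\ell^{*})'$ weakly-$*$ in $L^\infty(0,T)$, with $\ell^{*}(0)=\ell_0$ and $\ell^{*}([0,T])\subset[\ell_l,\ell_r]$. Because $G$ and $G^{-1}$ are smooth, the explicit formulas~\eqref{coefficients_diffeom} show that $d^{\ell_n}_l\to d^{\ell^{*}}_l$ (together with its first two $\xi$-derivatives) uniformly on $\overline{Q}_{0,l}$ and $b^{\ell_n}_l\rightharpoonup b^{\ell^{*}}_l$ weakly-$*$ in $L^\infty(Q_{0,l})$, and similarly on $Q_{0,r}$. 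By the Aubin--Lions lemma, a further subsequence satisfies $\psi_{\ell_n}\to\psi^{*}$ strongly in $L^2(Q_{0,l})$ and weakly in $L^2(0,T;H^1(0,\ell_0))$, and $\zeta_{\ell_n}\to\zeta^{*}$ in the analogous senses.

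Then I would identify the limit and conclude. Passing to the limit in the weak formulation of~\eqref{am9} written in divergence form---so that each coefficient is paired either with one of the \emph{strongly} convergent factors $\psi_{\ell_n},\zeta_{\ell_n}$, or, for $d^{\ell_n}_l,d^{\ell_n}_r$, with the weakly convergent gradients, in which case the \emph{uniform} convergence of those coefficients saves the argument---one checks that no product of two merely weakly convergent sequences occurs and hence that $(\psi^{*},\zeta^{*})$ is the weak solution of~\eqref{am9} with $\ell$ replaced by $\ell^{*}$; by uniqueness, $(\psi^{*},\zeta^{*})=(\psi_{\ell^{*}},\zeta_{\ell^{*}})$. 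Moreover the trace operator $w\mapsto w(\ell_0,\cdot)$ is weakly continuous on $L^2(0,T;H^1)$, so $\psi^{*}(\ell_0,\cdot)=1$ on $(0,T)$. On the other hand, the strong $L^2$ convergence on $\mathcal{O}_l$ gives $\|\psi^{*}\|_{L^2(\mathcal{O}_l)}=\lim_n\|\psi_{\ell_n}\|_{L^2(\mathcal{O}_l)}=0$, i.e. $\psi^{*}\equiv0$ on $\omega_l\times(0,T)$. Rewriting~\eqref{am9} in non-divergence form (and reversing time), $\psi^{*}$ solves a parabolic equation whose principal coefficient $d^{\ell^{*}}_l$ is Lipschitz and bounded below and whose lower-order coefficients lie in $L^\infty$; by the unique continuation property for such equations (which can be derived from suitable Carleman estimates), $\psi^{*}\equiv0$ in $Q_{0,l}$, contradicting $\psi^{*}(\ell_0,\cdot)=1$. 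This proves the existence of $C_0>0$.

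The step I expect to be the main obstacle is the passage to the limit in the coefficient terms together with the unique continuation at the end: the $b^{\ell_n}$-terms converge only weakly-$*$, so the weak formulation must be organized so that these coefficients always act on the strongly convergent states and never on the weakly convergent gradients; and one must make sure the limiting principal coefficient $d^{\ell^{*}}_l$ retains enough regularity (Lipschitz, bounded below) to legitimize the parabolic unique continuation. A minor additional care is that $\ell^{*}$ need only be Lipschitz, not $C^1$, so the adjoint system at $\ell^{*}$ must be read in the weak, $L^\infty$-coefficient sense, which does not affect either the energy estimates or the unique continuation.
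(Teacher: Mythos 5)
Your argument is correct and follows essentially the same compactness--uniqueness route as the paper: argue by contradiction with a sequence $\ell_n\in\mathcal{B}_R$, extract limits, show the limit solves the adjoint problem with $\psi(\ell_0,\cdot)=1$ while vanishing on $\omega_l\times(0,T)$, and conclude by parabolic unique continuation. The only technical difference is how compactness is obtained: the paper invokes the parabolic smoothing effect to get uniform $L^2(0,T-\sigma;H^2)\cap H^1(0,T-\sigma;L^2)$ bounds and passes to the limit on $(0,T-\sigma)$ (thus avoiding the incompatibility of the data at $t=T$), whereas you use first-order energy estimates after lifting the boundary data plus Aubin--Lions on the whole cylinder; both are adequate.
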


\begin{proof}
	We argue by contradiction.
	Thus, if the assertion were false, then there would exist~$\ell_1, \ell_2,\ldots$ and associated pairs~$(\psi^{1},\zeta^{1}),(\psi^{2},\zeta^{2}),\ldots$ (weak solutions to~\eqref{am9}), such that 
        \begin{equation}\label{ampsi}
	\|\ell'_n\|_{\infty} \leq R~~\mbox{and}~~\|\psi^n\|_{L^2(\mathcal{O}_l)} + \|\zeta^n\|_{L^2(\mathcal{O}_r)} \leq \dfrac{1}
	{n}~~\forall~n \geq 1.
        \end{equation}
	Due to the smoothing effect of parabolic operators and the fact that the $(d^{\ell_n}_{l},b^{\ell_n}_{l})$ are uniformly bounded in~$C^{1}(\overline{Q}_{0,l})\times C^{0}(\overline{Q}_{0,l})$, there would exist $\sigma > 0$ such that
        \[
	\|\psi^{n}\|_{L^2(0,T-\sigma; H^2(0,\ell_0))} + \|\psi_t^{n}\|_{L^2(0,T-\sigma; L^{2}(0,\ell_0))} \leq 
	C~~\forall~n \geq 1,
        \]
with $C > 0$ depending only on $\ell_0$, $\ell_l$, $\ell_r$, $T$ and~$R$.
	Consequently, after extraction of a subsequence, we would have
        \[
        \left\{
        \begin{array}{lll}
         \ell_n \rightarrow \ell & \mbox{strongly in} & C^0([0, T-\sigma]),\\
         \ell_n \rightarrow \ell & \mbox{weakly in} & H^1(0,T-\sigma),\\
         \psi^{n} \rightarrow \psi & \mbox{weakly in} & L^2(0,T-\sigma; H^2(0,\ell_0)) \cap H^1(0,T-\sigma; L^{2}(0,\ell_0))
        \end{array}
         \right.
        \] 
and we would be able to pass to the limit in the equation and the boundary condition satisfied by $\psi^{n}$ to deduce that
        \begin{equation}\label{psiequation_free}
          \left\{
          \begin{array}{lll}
          (M^{\ell}_{l})^{*}(\psi) = 0 & \mbox{in} & (0,\ell_0)\times (0,T-\sigma),\\
          \psi(0,\cdot) = 0,~ \psi(\ell_0,\cdot) = 1 & \mbox{on} &  (0,T-\sigma).
          \end{array}
          \right.
        \end{equation}
But we would also have, by~\eqref{ampsi}, that $\psi \equiv 0$ in~$\omega_{l}\times (0,T-\sigma)$, which is impossible, in view of the unique continuation property and \eqref{psiequation_free}$_2$. This ends the proof.
\end{proof}

\

	Let us multiply $\eqref{am7}_1$ by $\psi_{\ell}$ and let us integrate in $Q_{0,l}$ to obtain     
\begin{align}\label{am10}
     	\iint_{\mathcal{O}_{l}}h_l\psi_{\ell}\,d\xi\,dt = -\int_{0}^{\ell_0}p_0(\xi)\psi_{\ell}(\xi,0)\,d\xi - \int_{0}^{T}d_l p_{\xi}
	(\ell_0,\tau)\,d\tau.
\end{align}       
       	Analogously, multiplying $\eqref{am7}_2$ by $\zeta_{\ell}$ and integrating in $Q_{0,r}$ we get
\begin{align}\label{am11}
   	\iint_{\mathcal{O}_r}h_r\zeta_{\ell}\,d\xi\,dt =  -\int_{\ell_0}^{L}q_0(\xi)\zeta_{\ell}(\xi,0)\,d\xi + \int_{0}^{T}d_r q_{\xi}
	(\ell_0, \tau )\,d \tau .
\end{align}
       It follows from \eqref{am10}--\eqref{am11}  that a couple of controls $(h_l,h_r)\in L^2(\mathcal{O}_l)\times L^2(\mathcal{O}_r)$ is such that $\mathcal{L}(T) = \ell_T$ if and only if
\begin{align}\label{am12}
	\iint_{\mathcal{O}_l}h_l\psi_{\ell}\,d\xi\,dt + \iint_{\mathcal{O}_r}h_r\zeta_{\ell}\,d\xi\,dt = \ell_T- \ell_0 - \int_{0}^{\ell_0}
	p_0(\xi)\psi_{\ell}(\xi,0)\,d\xi - \int_{\ell_0}^L q_0(\xi)\zeta_{\ell}(\xi,0)\,d\xi.
\end{align}
       
       In Section~\ref{approximatedsection}, we will establish the approximate controllability of \eqref{am7} subject to the linear constraint~\eqref{am12}.
       Before, we will need an adequate (improved) observability inequality.

   
\subsection{An improved observability inequality}

	To do this, let us first consider open sets $\omega_{0,l}\subset\subset \omega_{l}$, $\omega_{0,r}\subset\subset \omega_{l}$ and let us introduce the weight functions
	$\eta_{0,l}\in~C^2([0,\ell_0])$ and $\eta_{0,r}\in C^2([\ell_0,L])$ satisfying
        \[
    	\left\{
       		\begin{array}{l}
      			\eta_{0,l} > 0\,\,\mbox{in}\,\,(0,\ell_0),\,\,\eta_{0,l}(0) = \eta_{0,l}(\ell_0)= 0\,\,\mbox{and}\,\,|\eta_{0,l}'| > 0\,\,\mbox{in}\,\,[0,\ell_0]\backslash \omega_{0,l},\\
       			\eta_{0,r}  > 0\,\,\mbox{in}\,\,(\ell_0,L),\,\,\eta_{0,r}(\ell_0) = \eta_{0,r}(L)= 0\,\,\mbox{and}\,\,|\eta_{0,r}'| > 0\,\,\mbox{in}\,\,[\ell_0,L]\backslash \omega_{0,r}.
       		\end{array}
       \right.
        \]
       Also, for any~$\lambda > 0$, let us set
        \[
    	\left\{
       		\begin{array}{l}
       			\mu_{l}(\xi,t) := \dfrac{e^{\lambda \eta_{0,l}(\xi)} }{t(T - t)},\,\,\,\alpha_{l}(\xi,t) := \dfrac{e^{2\lambda\|\eta_{0,l}\|_{\infty}} -  e^{\lambda \eta_{0,l}(\xi)}}{t(T - t)},\\
       			\\
        			\mu_{r}(\xi,t) := \dfrac{e^{\lambda \eta_{0,r}(\xi)} }{t(T - t)},\,\,\,\alpha_{r}(\xi,t) := \dfrac{e^{2\lambda\|\eta_{0,r}\|_{\infty}} -  e^{\lambda \eta_{0,r}(\xi)}}{t(T - t)}.
       		\end{array}
    	\right.
        \]
    	Then, by using the regularity of the coefficients of the adjoint operators $(M^\ell_l)^{*} $ and $(M^\ell_r)^{*}$ (see Remark \ref{rmk:regul_coef}) and following the ideas in~\cite{guerreroo, ymanuvilov}, we get the following global Carleman estimates:

\begin{prop}
     	Let $R>0$ and assume that $\ell\in C^{1}([0,T])$ satisfies $\ell(0) = \ell_0$, $\ell([0,T])\subset (\ell_l,\ell_r)$ and $\|\ell'\|_{C^0([0,T])}\leq R$. Then, there exist positive constants $\lambda_0,  s _0$ and $C$ (depending on $\ell_l, \ell_r, R,\omega_{l},\omega_{r}$ and $T$) such that, for any $ s \geq s _0$ and
	$\lambda\geq \lambda_0$, we have:
\begin{align}\label{am14}
      	\iint_{Q_{0,l}}e^{-2 s \alpha_{l}}\left[( s \mu_{l})^{-1}\left(|\varphi_t|^2 + |\varphi_{\xi\xi}|^2\right) + \lambda^2( s \mu_{l})|\varphi_{\xi}|^2 + \lambda^4( s \mu_{l})^3|\varphi|^2\right]d\xi\,dt\\
      	\leq C\left[\iint_{Q_{0,l}} e^{-2 s \alpha_{l}}\left|(M^\ell_l)^{*}(\varphi)\right|^2\,d\xi\,dt 
	+\iint_{\mathcal{O}_{l}}e^{-2 s \alpha_l}\lambda^4( s \mu_l)^3|\varphi|^2\,d\xi\,dt\right]\nonumber    
\end{align}
   	and
\begin{align}\label{am15}
       	\iint_{Q_{0,r}}e^{-2 s \alpha_r}\left[( s \mu_r)^{-1}\left(|\phi_t|^2 + |\phi_{\xi\xi}|^2\right) + \lambda^2( s \mu_r)|\phi_{\xi}|^2 + \lambda^4( s \mu_r)^3|\phi|^2\right]d\xi\,dt\\
      	\leq C\left[ \iint_{Q_{0,r}} e^{-2 s \alpha_r}|(M^\ell_r)^{*}(\phi)|^2\,d\xi\,dt + \iint_{\mathcal{O}_r}e^{-2 s \alpha_r}\lambda^4( s \mu_r)^3|\phi|^2\,d\xi\,dt\right],\nonumber    
\end{align}
    	for any pair $(\varphi,\phi)$ in the Bochner-Sobolev space 	
	        \[
		[L^2(0,T;H_0^1(0,\ell_0))\cap  H^1(0,T;H^{-1}(0,\ell_0))] \times [L^2(0,T;H_0^1(\ell_0,L))\cap H^1(0,T;H^{-1}(\ell_0,L))]
	        \] 
	such that $\left((M^\ell_l)^{*}(\varphi),(M^\ell_r)^{*}(\phi)\right)$ belongs to~$L^2(Q_{0,l})\times L^2(Q_{0,r})$.
\end{prop}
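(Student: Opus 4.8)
The plan is to derive these two Carleman inequalities by reducing them to the classical one‑dimensional parabolic Carleman estimate, exploiting the regularity of the coefficients recorded in Remark~\ref{rmk:regul_coef}. Since the two estimates \eqref{am14} and \eqref{am15} are structurally identical (only the domain $(0,\ell_0)$ versus $(\ell_0,L)$, the operator $(M^\ell_l)^*$ versus $(M^\ell_r)^*$, and the observation set differ), I would prove the first one in detail and indicate that the second follows verbatim. Recall that $(M^\ell_l)^*(\varphi) = -\varphi_t - (d^\ell_l\varphi)_{\xi\xi} - (b^\ell_l\varphi)_\xi$, so that $(M^\ell_l)^*$ is (up to time reversal) a uniformly parabolic operator $-\varphi_t - d^\ell_l\varphi_{\xi\xi} + \widetilde{b}^\ell_l\varphi_\xi + \widetilde{c}^\ell_l\varphi$ on the fixed cylinder $Q_{0,l}=(0,\ell_0)\times(0,T)$, with $d^\ell_l\in C^1(\overline{Q}_{0,l})$ bounded below by a positive constant (since $G_x>0$), $\widetilde{b}^\ell_l\in C^0(\overline{Q}_{0,l})$ and $\widetilde{c}^\ell_l\in C^0(\overline{Q}_{0,l})$; moreover, by Remark~\ref{remark_appen_free} and the hypothesis $\|\ell'\|_{C^0}\le R$, all these coefficients are bounded in their respective norms by a constant depending only on $\ell_l,\ell_r,R,T$ (and the fixed data $\sigma,\eta$). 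Also $\varphi$ vanishes on the lateral boundary $\{0,\ell_0\}\times(0,T)$ because $\varphi\in L^2(0,T;H_0^1(0,\ell_0))$.

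With this reduction in hand, the argument is the standard one. First I would apply the classical global Carleman estimate for second‑order parabolic operators on a bounded interval with homogeneous Dirichlet conditions — in the form proved, e.g., in \cite{ymanuvilov} and in the variant with the explicit powers of $s\mu_l$ on $\varphi_t$ and $\varphi_{\xi\xi}$ as in \cite{guerreroo} — using the weights $\alpha_l,\mu_l$ built from $\eta_{0,l}$, whose properties ($\eta_{0,l}>0$ inside, vanishing at the endpoints, $|\eta_{0,l}'|>0$ off $\omega_{0,l}$) are exactly those required to run Fursikov–Imanuvilov. This yields \eqref{am14} but with the interior observation taken over the smaller set $\omega_{0,l}\times(0,T)$ rather than $\omega_l\times(0,T)$. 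Second, I would absorb the lower‑order terms coming from $\widetilde{b}^\ell_l\varphi_\xi+\widetilde{c}^\ell_l\varphi$ into the left‑hand side: since $\|\widetilde b^\ell_l\|_\infty,\|\widetilde c^\ell_l\|_\infty$ are bounded by a constant $C=C(\ell_l,\ell_r,R,T)$, these terms are controlled by $C\iint e^{-2s\alpha_l}(|\varphi_\xi|^2+|\varphi|^2)$, which is dominated by $\tfrac12$ of the $\lambda^2(s\mu_l)|\varphi_\xi|^2+\lambda^4(s\mu_l)^3|\varphi|^2$ terms on the left provided $s\ge s_0$, $\lambda\ge\lambda_0$ are chosen large enough depending on $C$ — here it is crucial that $s_0,\lambda_0$ are allowed to depend on $R$ and $T$. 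Third, to upgrade the observation region from $\omega_{0,l}$ to $\omega_l$ I would insert a cutoff $\theta\in C_c^\infty(\omega_l)$ with $\theta\equiv1$ on $\omega_{0,l}$ and perform the usual local energy estimate for $(M^\ell_l)^*$ multiplied by $\theta^2 e^{-2s\alpha_l}(s\mu_l)^3\varphi$, integrating by parts to bound $\iint_{\omega_{0,l}\times(0,T)}e^{-2s\alpha_l}(s\mu_l)^3|\varphi|^2$ by a small multiple of the global left‑hand side plus $C\iint_{\mathcal O_l}e^{-2s\alpha_l}(s\mu_l)^3|\varphi|^2$ plus $C\iint_{Q_{0,l}}e^{-2s\alpha_l}|(M^\ell_l)^*(\varphi)|^2$, again absorbing the global part for $s$ large. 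Combining the three steps gives \eqref{am14} with the correct observation set $\mathcal O_l=\omega_l\times(0,T)$, and, since all constants along the way were chosen uniformly in $\ell\in\mathcal B_R$ (equivalently in $\ell$ with $\ell(0)=\ell_0$, $\ell([0,T])\subset(\ell_l,\ell_r)$, $\|\ell'\|_{C^0}\le R$), the constants $\lambda_0,s_0,C$ in the statement depend only on $\ell_l,\ell_r,R,\omega_l,\omega_r,T$.

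Two technical points deserve care. The first, and the one I expect to be the main obstacle, is the density/regularity issue: the classical Carleman estimate is usually stated for smooth functions, whereas here $(\varphi,\phi)$ only lies in $L^2(H_0^1)\cap H^1(H^{-1})$ with $(M^\ell_l)^*\varphi\in L^2(Q_{0,l})$. One must check that this regularity, together with the parabolic smoothing away from $t=0$ and $t=T$ (where the weight $e^{-2s\alpha_l}$ vanishes to all orders), is enough to justify all the integrations by parts — either by a direct approximation argument (mollifying in time and truncating away from the endpoints, using that $e^{-2s\alpha_l}$ and all its derivatives vanish as $t\to0^+,T^-$ so no boundary contributions in $t$ appear) or by first establishing the estimate for the smooth solutions produced by Galerkin truncation and then passing to the limit by lower semicontinuity of the left‑hand side. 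The second point is purely notational: the right‑hand sides of \eqref{am14}--\eqref{am15} are written without the factor $\lambda^4$ inside the source term $\iint e^{-2s\alpha_l}|(M^\ell_l)^*(\varphi)|^2$, which is consistent with the standard normalization once one observes that in the local and global steps the source term always appears multiplied by a nonnegative power of $s$ that can be bounded by $1$ for $s\ge s_0$ after possibly enlarging $C$. I would state the result for $\varphi$ first, note that $\phi$ is handled identically on $(\ell_0,L)$ with $\eta_{0,r},\alpha_r,\mu_r$, and conclude.
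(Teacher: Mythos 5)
Your proposal is correct and follows essentially the same route as the paper, which establishes this proposition simply by invoking the coefficient regularity of Remark~\ref{rmk:regul_coef} and the classical Fursikov--Imanuvilov/Fern\'andez-Cara--Guerrero Carleman machinery for 1D parabolic operators in a cylinder with the weights built from $\eta_{0,l}$, $\eta_{0,r}$ --- exactly the reduction you spell out, including the uniformity of the constants in $\ell$ with $\|\ell'\|_{C^0}\leq R$. One minor remark: your third step (the cutoff argument to pass from $\omega_{0,l}$ to $\omega_l$) is superfluous here, since the local observation term involves only $|\varphi|^2$ and $\omega_{0,l}\subset\omega_l$, so the inclusion of the observation sets already gives the stated right-hand side.
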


      A straightforward argument, based on the estimates \eqref{am14}--\eqref{am15}, leads the 
      following observability inequality:
      
\begin{prop}
    	Let $R>0$ and assume that $\ell\in C^{1}([0,T])$ satisfies $\ell(0) = \ell_0$, $\ell([0,T])\subset (\ell_l,\ell_r)$ and $\|\ell'\|_{C^0([0,T])}\leq R$. There exist positive constants $\lambda_0,  s _0$ and $C$, depending on $\ell_l$, $\ell_r$, $R$,  $\omega_l$, $\omega_r$ and~$T$, such that, for any $s \geq s _0$ and any~$\lambda \geq \lambda_0$, we have:
\begin{align}\label{am16}
      	\|(\varphi(\cdot,0),\phi(\cdot,0))\|_{L^2(0,\ell_0)\times L^2(\ell_0,L)} \leq C\|(\varphi,\phi)\|_{L^2(\mathcal{O}_l)\times L^2(\mathcal{O}_r)},
\end{align}
      for any pair $(\varphi,\phi)$ in the Bochner-Sobolev space 
	        \[
	[L^2(0,T;H_0^1(0,\ell_0))\cap  H^1(0,T;H^{-1}(0,\ell_0))] \times [L^2(0,T;H_0^1(\ell_0,L))\cap H^1(0,T;H^{-1}(\ell_0,L))]
	        \]
such that $\left((M^\ell_l)^{*}(\varphi),(M^\ell_r)^{*}(\phi)\right) = (0,0)$.
\end{prop}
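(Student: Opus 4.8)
The plan is to derive the observability inequality \eqref{am16} from the two Carleman estimates \eqref{am14}--\eqref{am15} by a standard absorption argument, with the only subtlety being the transmission condition that couples $\varphi$ and $\phi$ at the interface $\xi = \ell_0$. First I would fix $(\varphi,\phi)$ in the stated Bochner--Sobolev space with $\left((M^\ell_l)^{*}(\varphi),(M^\ell_r)^{*}(\phi)\right)=(0,0)$, so that the right-hand sides of \eqref{am14} and \eqref{am15} reduce to the observation terms over $\mathcal{O}_l$ and $\mathcal{O}_r$ respectively. Adding the two inequalities gives a weighted global estimate
\begin{equation*}
	\iint_{Q_{0,l}} e^{-2s\alpha_l}\lambda^4(s\mu_l)^3|\varphi|^2\,d\xi\,dt + \iint_{Q_{0,r}} e^{-2s\alpha_r}\lambda^4(s\mu_r)^3|\phi|^2\,d\xi\,dt \leq C\left(\|\varphi\|_{L^2(\mathcal{O}_l)}^2 + \|\phi\|_{L^2(\mathcal{O}_r)}^2\right),
\end{equation*}
after fixing $\lambda = \lambda_0$ and $s = s_0$ so that the weights $e^{-2s\alpha}\lambda^4(s\mu)^3$ are bounded below by a positive constant on any compact subset of $(0,T)$ away from the lateral boundaries; in particular on $\omega_l\times(0,T)$ and $\omega_r\times(0,T)$ the observation weights are bounded, which is what lets us drop them into the clean $L^2(\mathcal{O}_l)\times L^2(\mathcal{O}_r)$ norm on the right.

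Next I would pass from the weighted interior bound to a bound on the energy at a single interior time. The adjoint operators $(M^\ell_l)^{*}$ and $(M^\ell_r)^{*}$ are backward parabolic operators with coefficients in $C^1(\overline Q_{0,l})$, $C^0(\overline Q_{0,l})$ (and analogously on the right) by Remark~\ref{rmk:regul_coef}, and the lateral conditions for the pair $(\varphi,\phi)$ are homogeneous Dirichlet at $\xi=0$ and $\xi=L$ together with the transmission conditions $\varphi(\ell_0,\cdot)=\phi(\ell_0,\cdot)$ and the matching of the fluxes $d_l\varphi_\xi(\ell_0,\cdot) = d_r\phi_\xi(\ell_0,\cdot)$ inherited from the structure of \eqref{am9}. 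Testing the two adjoint equations by $\varphi$ and $\phi$ respectively and integrating by parts over $(0,\ell_0)$ and $(\ell_0,L)$, the interface boundary terms cancel thanks to exactly these transmission conditions, and one obtains a differential inequality of the form $\frac{d}{dt}\|(\varphi,\phi)(\cdot,t)\|_{L^2\times L^2}^2 \geq -C\|(\varphi,\phi)(\cdot,t)\|_{L^2\times L^2}^2$ (the sign being dictated by the backward-in-time direction). By Gr\"onwall's lemma, $t\mapsto e^{Ct}\|(\varphi,\phi)(\cdot,t)\|_{L^2\times L^2}^2$ is nonincreasing, so in particular $\|(\varphi,\phi)(\cdot,0)\|_{L^2\times L^2}^2 \leq C \|(\varphi,\phi)(\cdot,t)\|_{L^2\times L^2}^2$ for every $t$, and integrating over a fixed compact time interval $[T/4,3T/4]$ gives
\begin{equation*}
	\|(\varphi(\cdot,0),\phi(\cdot,0))\|_{L^2(0,\ell_0)\times L^2(\ell_0,L)}^2 \leq C\int_{T/4}^{3T/4}\left(\|\varphi(\cdot,t)\|_{L^2(0,\ell_0)}^2 + \|\phi(\cdot,t)\|_{L^2(\ell_0,L)}^2\right)dt.
\end{equation*}
On this time interval the weights $e^{-2s\alpha_l}\lambda^4(s\mu_l)^3$ and $e^{-2s\alpha_r}\lambda^4(s\mu_r)^3$ are bounded below by a positive constant (depending on $T$, $\lambda_0$, $s_0$), so the right-hand side is controlled by the weighted interior integrals from the previous step, and hence by $C(\|\varphi\|_{L^2(\mathcal{O}_l)}^2+\|\phi\|_{L^2(\mathcal{O}_r)}^2)$. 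Taking square roots yields \eqref{am16}.

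The main obstacle I anticipate is a technical regularity issue rather than a conceptual one: the Carleman estimates are stated for pairs whose adjoint operators land in $L^2(Q_{0,l})\times L^2(Q_{0,r})$, but applying them and then performing the energy identity requires enough regularity to justify the traces $\varphi_\xi(\ell_0,\cdot)$, $\phi_\xi(\ell_0,\cdot)$ and the integration by parts at the interface. I would handle this by a density/approximation argument: first prove the estimate for the smooth solutions of \eqref{am9} (which exist by the smoothing effect of parabolic equations away from $t=T$, exactly as used in the proof of Proposition~\ref{proplimitation_coef}), where all the manipulations are legitimate, and then pass to the limit using the continuous dependence of the weak solution on the data, so that \eqref{am16} holds for the full class of pairs described in the statement. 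A secondary point requiring care is checking that the constant in \eqref{am16} can be taken uniform over $\ell\in C^1([0,T])$ with $\|\ell'\|_{C^0([0,T])}\leq R$; this follows because both the Carleman constants $\lambda_0$, $s_0$, $C$ and the Gr\"onwall constant depend on the coefficients only through uniform bounds on $(d^\ell_l,b^\ell_l)$, $(d^\ell_r,b^\ell_r)$ in $C^1\times C^0$, which in turn are controlled by $\ell_l$, $\ell_r$, $T$ and $R$ via the smoothness of $G$ and $G^{-1}$.
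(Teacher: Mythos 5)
Your overall strategy is exactly the ``straightforward argument'' the paper alludes to (it gives no detailed proof): add the two Carleman estimates \eqref{am14}--\eqref{am15} with fixed $\lambda=\lambda_0$, $s=s_0$, use that the observation weights are bounded above on $\mathcal{O}_l$, $\mathcal{O}_r$ and bounded below on a compact interior time slab, and combine with a backward energy (Gr\"onwall) estimate to reach $t=0$; the uniformity in $\ell$ with $\|\ell'\|_{C^0}\leq R$ via the $C^1\times C^0$ bounds on $(d^\ell,b^\ell)$ is also the right remark. One point in your write-up is, however, factually wrong and should be corrected: the pair $(\varphi,\phi)$ in the statement (and in \eqref{am1000}) is \emph{not} coupled by transmission conditions. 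Since $\varphi\in L^2(0,T;H_0^1(0,\ell_0))$ and $\phi\in L^2(0,T;H_0^1(\ell_0,L))$, each component satisfies homogeneous Dirichlet conditions on its own interval, in particular $\varphi(\ell_0,\cdot)=0$ and $\phi(\ell_0,\cdot)=0$ separately; there is no condition $\varphi(\ell_0,\cdot)=\phi(\ell_0,\cdot)$ (that would be trivially $0=0$) and, more importantly, no flux-matching condition $d_l\varphi_\xi(\ell_0,\cdot)=d_r\phi_\xi(\ell_0,\cdot)$ is imposed or available for the adjoint variables — the system \eqref{am9} you invoke is the augmented adjoint with boundary value $1$ at $\ell_0$, not the class considered here. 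Fortunately this does not damage your argument: the two adjoint problems are completely decoupled, and in the energy identity the boundary terms at $\xi=\ell_0$ vanish individually because of the Dirichlet conditions, so no cancellation between the phases is needed; also, no trace of $\varphi_\xi$, $\phi_\xi$ at $\ell_0$ enters, which makes your density/approximation step lighter than you anticipate (the standard variational energy estimate for $L^2(H_0^1)\cap H^1(H^{-1})$ solutions suffices, with the first-order terms $b^\ell_l$, $b^\ell_r$ absorbed by Cauchy--Schwarz before applying Gr\"onwall). With that correction, your proof of \eqref{am16} is sound and coincides with the paper's intended route.
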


	In order to present an improved observability inequality, let us introduce the linear projectors $\mathbb{P}^{\ell}_l: L^2(Q_{0,l}) \mapsto L^2(Q_{0,l})$ and $\mathbb{P}^{\ell}_r: L^2(Q_{0,r}) \mapsto  L^2(Q_{0,r})$, respectively given by
        \[
      	\mathbb{P}^{\ell}_l{\varphi} := \beta^{\ell}_l(\varphi)\psi_{\ell} \ \text{ and } \ 
      	\mathbb{P}^{\ell}_r{\phi} := \beta^{\ell}_r(\phi)\zeta_{\ell},
        \]
where we have set
        \[
	\beta^{\ell}_l(\varphi) := \frac{\displaystyle\iint_{\mathcal{O}_l}\psi_{\ell}\varphi\,d\xi\,dt}
	{\displaystyle\iint_{\mathcal{O}_l}|\psi_{\ell}|^2 \,d\xi\,dt} \quad \text{and}\quad 
	\beta^{\ell}_r(\phi) := \frac{\displaystyle\iint_{\mathcal{O}_r}\zeta_{\ell}\phi\,d\xi\,dt}
	{\displaystyle\iint_{\mathcal{O}_r}|\zeta_{\ell}|^2\,d\xi\,dt}
        \]
and $(\psi_{\ell},\zeta_{\ell})$ is the unique weak solution to \eqref{am9}.

\begin{remark}
	{\rm Note that the ranges of $\mathbb{P}^{\ell}_l$ and $\mathbb{P}^{\ell}_r$ are 1D vector spaces.
	Therefore, these operators are compact.
	\Fin}
	\end{remark}

	For any $(\varphi_T, \phi_T) \in L^2(0,\ell_0)\times L^2(\ell_0,L)$, there exists an unique couple $(\varphi,\phi)$ satisfying
        \begin{equation}\label{am999}
	\varphi  \in L^2(0,T;H_0^1(0,\ell_0))\cap H^1(0,T;H^{-1}(0,\ell_0)) \ \text{ and } \
	\phi\in L^2(0,T;H_0^1(\ell_0,L))\cap H^1(0,T;H^{-1}(\ell_0,L)),
        \end{equation}
that solves in the weak sense the linear system
        \begin{equation}\label{am1000}
       	\left\{
       		\begin{array}{lll}
       			(M_{l}^\ell)^{*}(\varphi) = 0 					&\mbox{in}& 	Q_{0,l},\\
       			(M_{r}^\ell)^{*}(\phi) = 0 					&\mbox{in} & 	Q_{0,r},\\
       			\varphi(0,\cdot) = \varphi(\ell_0,\cdot) = 0 		&\mbox{on} & 	(0,T),\\
      	 		\phi(0,\cdot) = \phi(\ell_0,\cdot) = 0 	&\mbox{on} & 	(0,T),\\
       			\varphi(\cdot,T) = \varphi_T 						&\mbox{in} & 	(0,\ell_0),\\
       			\phi(\cdot,T) = \phi_T 						&\mbox{in} & 	(\ell_0,L).
       		\end{array}
      	\right.
        \end{equation}	
Accordingly, we can introduce the following functional in $L^2(0,\ell_0)\times L^2(\ell_0,L)$:
\begin{align*}
       	I(\varphi_T,\phi_T) := \iint_{\mathcal{O}_l}|\varphi|^2\,d\xi\,dt + \int_{0}^{\ell_0}|\varphi(\xi,0)|^2\,\,d\xi
	+|\beta^{\ell}_l(\varphi)|^2 \\
        + \iint_{\mathcal{O}_r}|\phi|^2 \,d\xi\,dt + \int_{\ell_0}^{L}|\phi(\xi,0)|^2\,d\xi+ |\beta^{\ell}_r(\phi)|^2,
\end{align*}
where $(\varphi,\phi)$ satisfies \eqref{am999}--\eqref{am1000}.
		
       	We can prove the following result:
\begin{prop}\label{improved}
	Let $R>0$ and let us assume that $\ell\in C^1([0,T])$ satisfies $\ell(0) = \ell_0$, $\ell([0,T])\subset (\ell_l,\ell_r)$ and $\|\ell'\|_{C^0([0,T])}\leq R$.  Then, there exists a positive constant $C$, depending on $\ell_0$, $\ell_l$, $\ell_r$, $R$, $\omega_l$, $\omega_r$ and~$T$, such that, for any 
         $(\varphi_T,\phi_T)\in L^2(0,\ell_0)\times L^2(\ell_0,L) $, the following holds:
        \begin{equation}\label{am18}
\begin{alignedat}{2}
        	I(\varphi_{T},\phi_{T}) \leq &~ C\left[ 
	 \iint_{\mathcal{O}_l} |\varphi - \mathbb{P}^{\ell}_l{\varphi}|^2\,d\xi\,dt
    + \iint_{\mathcal{O}_r} |\phi - \mathbb{P}^{\ell}_r{\phi}|^2\,d\xi\,dt\right].
\end{alignedat}
        \end{equation}
\end{prop}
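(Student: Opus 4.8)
The plan is to argue by contradiction, using a normalization, the \emph{interior} (in time) parabolic regularity contained in the Carleman estimates \eqref{am14}--\eqref{am15}, an Aubin--Lions compactness argument, the observability inequality \eqref{am16}, and finally the unique continuation property for the adjoint operators.

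Suppose \eqref{am18} fails. Then, for every $n\geq 1$ there is $(\varphi_T^n,\phi_T^n)\in L^2(0,\ell_0)\times L^2(\ell_0,L)$, with associated solution $(\varphi^n,\phi^n)$ of \eqref{am1000}, such that the right-hand side of \eqref{am18} is at most $1/n$ times $I(\varphi_T^n,\phi_T^n)$; since $I$ and the right-hand side are quadratic and $(\varphi^n,\phi^n)$ depends linearly on $(\varphi_T^n,\phi_T^n)$, by homogeneity I may normalize so that $I(\varphi_T^n,\phi_T^n)=1$ and hence
$\iint_{\mathcal{O}_l}|\varphi^n-\mathbb{P}^\ell_l\varphi^n|^2\,d\xi\,dt+\iint_{\mathcal{O}_r}|\phi^n-\mathbb{P}^\ell_r\phi^n|^2\,d\xi\,dt\to 0$. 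From $I(\varphi_T^n,\phi_T^n)=1$, the sequences $\{\varphi^n\}$, $\{\phi^n\}$ are bounded in $L^2(\mathcal{O}_l)$, $L^2(\mathcal{O}_r)$, $\{(\varphi^n(\cdot,0),\phi^n(\cdot,0))\}$ is bounded in $L^2(0,\ell_0)\times L^2(\ell_0,L)$, and $\{\beta^\ell_l(\varphi^n)\}$, $\{\beta^\ell_r(\phi^n)\}$ are bounded in $\mathbb{R}$; after extracting a subsequence, $\beta^\ell_l(\varphi^n)\to\beta_l$ and $\beta^\ell_r(\phi^n)\to\beta_r$. Since $\mathbb{P}^\ell_l\varphi^n=\beta^\ell_l(\varphi^n)\psi_\ell$ and $\psi_\ell\in L^2(\mathcal{O}_l)$ (and analogously for $\zeta_\ell$), the convergence of the right-hand side of \eqref{am18} gives $\varphi^n\to\beta_l\psi_\ell$ strongly in $L^2(\mathcal{O}_l)$ and $\phi^n\to\beta_r\zeta_\ell$ strongly in $L^2(\mathcal{O}_r)$.

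The core of the proof is to show that $\beta_l=\beta_r=0$. Since $(M^\ell_l)^*(\varphi^n)=0$, applying \eqref{am14} to $\varphi^n$ and bounding its right-hand side by $C\|\varphi^n\|_{L^2(\mathcal{O}_l)}^2$ (the weight $e^{-2s\alpha_l}\lambda^4(s\mu_l)^3$ being bounded on $\overline{Q}_{0,l}$ for the fixed $s,\lambda$) yields, for every $\sigma\in(0,T/2)$, a bound for $\{\varphi^n\}$ in $L^2(\sigma,T-\sigma;H^2(0,\ell_0))\cap H^1(\sigma,T-\sigma;L^2(0,\ell_0))$ independent of $n$; the same holds for $\{\phi^n\}$ using \eqref{am15}. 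By Aubin--Lions and a diagonal extraction over $\sigma\downarrow 0$, I may assume $\varphi^n\to\varphi$ and $\phi^n\to\phi$ strongly in $L^2_{\mathrm{loc}}((0,T);H^1)$ and weakly in $L^2_{\mathrm{loc}}((0,T);H^2)\cap H^1_{\mathrm{loc}}((0,T);L^2)$; passing to the limit in \eqref{am1000}, the pair $(\varphi,\phi)$ solves $(M^\ell_l)^*(\varphi)=0$ in $(0,\ell_0)\times(0,T)$ and $(M^\ell_r)^*(\phi)=0$ in $(\ell_0,L)\times(0,T)$, together with the homogeneous Dirichlet conditions at $\xi=0,\ell_0,L$. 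Comparing with the strong $L^2(\mathcal{O}_l)$ and $L^2(\mathcal{O}_r)$ limits, $\varphi=\beta_l\psi_\ell$ in $\omega_l\times(0,T)$ and $\phi=\beta_r\zeta_\ell$ in $\omega_r\times(0,T)$. Since $\psi_\ell$ also solves $(M^\ell_l)^*(\psi_\ell)=0$, the difference $\varphi-\beta_l\psi_\ell$ solves the same equation in $(0,\ell_0)\times(0,T)$ and vanishes on $\omega_l\times(0,T)$; by the unique continuation property for parabolic equations (applicable here thanks to the regularity of the coefficients, see Remark~\ref{rmk:regul_coef}), $\varphi\equiv\beta_l\psi_\ell$ in $(0,\ell_0)\times(0,T)$. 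Taking traces at $\xi=\ell_0$ and using $\varphi(\ell_0,\cdot)=0$ (from \eqref{am1000}) and $\psi_\ell(\ell_0,\cdot)=1$ (from \eqref{am9}) forces $\beta_l=0$; the same reasoning at $\xi=\ell_0$ for $\phi$ and $\zeta_\ell$ gives $\beta_r=0$.

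Once $\beta_l=\beta_r=0$, we conclude: $\varphi^n\to 0$ in $L^2(\mathcal{O}_l)$ and $\phi^n\to 0$ in $L^2(\mathcal{O}_r)$, hence $|\beta^\ell_l(\varphi^n)|\to 0$ and $|\beta^\ell_r(\phi^n)|\to 0$, and, by the observability inequality \eqref{am16} (applicable since $((M^\ell_l)^*\varphi^n,(M^\ell_r)^*\phi^n)=(0,0)$), also $\varphi^n(\cdot,0)\to 0$ in $L^2(0,\ell_0)$ and $\phi^n(\cdot,0)\to 0$ in $L^2(\ell_0,L)$. Therefore $I(\varphi_T^n,\phi_T^n)\to 0$, contradicting $I(\varphi_T^n,\phi_T^n)=1$, which proves \eqref{am18}. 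The main obstacle is the third step: extracting enough compactness to pass to the limit in the interior (the Carleman weights only give control away from $t=0$ and $t=T$, so I deliberately avoid any estimate near $t=T$ and recover the endpoint behaviour at $t=0$ only at the very end through \eqref{am16}), and, above all, recognizing that the limit is simultaneously a genuine solution of the \emph{homogeneous} adjoint system \eqref{am1000} and a multiple of $\psi_\ell$ (resp.\ $\zeta_\ell$) on the observation set --- the incompatibility of the two Dirichlet conditions at the interface $\xi=\ell_0$ is precisely what forces the projection constants to vanish, which is the mechanism behind this ``improved'' inequality.
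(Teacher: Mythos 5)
Your proof is correct and follows essentially the same route as the paper: contradiction with a normalization, Carleman-based interior-in-time compactness, identification of the limit with $(\beta_l\psi_{\ell},\beta_r\zeta_{\ell})$ on the observation sets, unique continuation combined with the incompatible traces at $\xi=\ell_0$ (where $\varphi(\ell_0,\cdot)=0$ but $\psi_{\ell}(\ell_0,\cdot)=1$) to force $\beta_l=\beta_r=0$, and finally the observability inequality \eqref{am16} to reach the contradiction. The only cosmetic difference is that you normalize the full functional $I$ to $1$ and dispose of the $|\beta^{\ell}_l(\varphi)|^2+|\beta^{\ell}_r(\phi)|^2$ terms directly in the final limit, whereas the paper first proves the inequality without these terms (estimate \eqref{am1800}) and then bounds them using Proposition~\ref{proplimitation_coef}.
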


\begin{proof}
 	The proof will be by contradiction.
	It is inspired by the results in \cite{nakoulima}.
	
 	Let us prove that there exists a constant $C_1>0$ (depending on $\ell_0, \ell_l, \ell_r,R, \omega_l,\omega_r$ and $T$) such that, for any couple of functions $(\varphi_{T},\phi_{T})\in L^2(0,\ell_0)\times L^2(\ell_0,L)$, one has
        \begin{equation}\label{am1800}
	\begin{alignedat}{2}
	&\iint_{\mathcal{O}_l}\!\!\!\!|\varphi|^2\,d\xi\,dt + \int_{0}^{\ell_0}\!\!|\varphi(\xi,0)|^2\,\,d\xi
        + \iint_{\mathcal{O}_l}\!\!\!\!|\phi|^2 \,d\xi\,dt + \int_{\ell_0}^{L}\!\!|\phi(\xi,0)|^2\,d\xi\\
	&\qquad \leq C_1\left[\iint_{\mathcal{O}_l} |\varphi - \mathbb{P}^{\ell}_l{\varphi}|^2\,d\xi\,dt 
           + \iint_{\mathcal{O}_r} |\phi - \mathbb{P}^{\ell}_r{\phi}|^2\,d\xi\,dt\right].
	\end{alignedat}
        \end{equation}
 	If \eqref{am1800} does not hold, there must exist $(\varphi_{T,1},\phi_{T,1}), (\varphi_{T,2},\phi_{T,2}), \dots$ in~$L^2(0,\ell_0)\times L^2(\ell_0,L)$ such that
        \begin{equation}\label{am105}
	\left\{
	\begin{alignedat}{2}
 	&\iint_{\mathcal{O}_l}\!\!\!\!|\varphi_n|^2\,d\xi\,dt + \int_{0}^{\ell_0}|\varphi_n(\xi,0)|^2\,\,d\xi + \iint_{\mathcal{O}_r}\!\!\!\!|\phi_n|^2 		\,d\xi\,dt + \int_{\ell_0}^{L}|\phi_n(\xi,0)|^2\,d\xi = 1 \ \text{ and} \\
	\noalign{\smallskip}\displaystyle	
	&\iint_{\mathcal{O}_l} |\varphi_n - \mathbb{P}^{\ell}_l{\varphi_n}|^2\,d\xi\,dt
        + \iint_{\mathcal{O}_r} |\phi _n- \mathbb{P}^{\ell}_r{\phi_n}|^2\,d\xi\,dt \leq \frac{1}{n}
	\end{alignedat}
	\right.
        \end{equation}
for all $n \geq 1$.
        Noting that
        \[
        \begin{array}{l}
        \dfrac{1}{2}\displaystyle\iint_{\mathcal{O}_{l}}|\mathbb{P}^{\ell}_{l}\varphi_{n}|^2\,d\xi\,dt +  \dfrac{1}
        {2}\iint_{\mathcal{O}_{r}}|\mathbb{P}^{\ell}_{r}\phi_{n}|^2\,d\xi\,dt \\
        \quad \leq\displaystyle\iint_{\mathcal{O}_{l}}\left[|\varphi_{n}|^2 +|\varphi_{n}
        - \mathbb{P}^{\ell}_{l}\varphi_{n}|^2\right]\,d\xi\,d
        + \displaystyle\iint_{\mathcal{O}_{r}}\left[|\phi_{n}|^2 + |\phi_{n} 
        - \mathbb{P}^{\ell}_{l}\phi_{n}|^2\right]\,d\xi\,dt ,
        \end{array}
        \]  
we easily get from~\eqref{am105} that the $(\beta^{\ell}_{l}(\varphi_{n}),\beta^{\ell}_{r}(\phi_{n}))$ are uniformly bounded in $\mathbb{R}^2$.
	Consequently, there exist a subsequence (again indexed by $n$) and a couple $(\beta_{l}^{*},\beta_{r}^{*})\in \mathbb{R}^2$ such that
        \begin{equation}\label{am106}
        (\beta^{\ell}_{l}(\varphi_{n}),\beta^{\ell}_{r}(\phi_{n}))\rightarrow (\beta_{l}^{*},\beta_{r}^{*})\quad\mbox{in}\,\,\,
        \mathbb{R}^2.
        \end{equation}
	It is clear from \eqref{am14}, \eqref{am15} and~\eqref{am105}$_1$ that, at least for a new subsequence, one has 
        \[
\begin{alignedat}{2}
       &\varphi_n\rightarrow \varphi ~~\mbox{weakly in}~~L^2(\sigma, T - \sigma; H^2(0,\ell_0)\cap H_0^1(0,\ell_0)),\\
       &\varphi_{n,t} \rightarrow \varphi_t ~~\mbox{weakly in}~~L^2(\sigma, T - \sigma; H^{-1}(0,\ell_0)),\\
       &\zeta_n\rightarrow \zeta ~~\mbox{weakly in}~~L^2(\sigma, T - \sigma; H^2(\ell_0,L)\cap H_0^1(\ell_0,L)),\\
       &\zeta_{n,t} \rightarrow \zeta_t ~~\mbox{weakly in}~~L^2(\sigma, T - \sigma; H^{-1}(\ell_0,L)),
\end{alignedat}
        \]  
for all $\sigma > 0$ small enough.
	Obviously, we have
        \begin{equation}\label{am109}
\left\{
\begin{array}{lll}
(M^{\ell}_{l})^{*}(\varphi) = 0 &\mbox{in} & Q_{0,l},\\
(M^{\ell}_{r})^{*}(\phi) = 0 &\mbox{in} & Q_{0,r},\\
\varphi(\cdot,0) = \varphi(\cdot,\ell_0) = 0 &\mbox{in} & (0,T),\\
\varphi(\cdot,\ell_0) = \varphi(\cdot,L) = 0 &\mbox{in} & (0,T). 
\end{array}
\right.
        \end{equation}
	Moreover, since $(\varphi_n,\phi_n) = (\varphi_n - \mathbb{P}^{\ell}_{l}\varphi_n,\phi_n - \mathbb{P}^{\ell}_{r}\phi_n) + (\mathbb{P}^{\ell}_{l}\varphi_n,\mathbb{P}^{\ell}_{r}\phi_n)$ in $\mathcal{O}_{l}\times \mathcal{O}_{r}$, using~\eqref{am105}$_2$ and~\eqref{am106}, it is also true that 
        \[
	(\varphi_n,\phi_n) \rightarrow (\mathbb{P}_{l}^{*}\varphi,\mathbb{P}_{r}^{*}\phi) \quad \mbox{strongly in}\,\,\,
	L^2(\mathcal{O}_{l})\times L^2(\mathcal{O}_{r}),
        \]
where $(\mathbb{P}_{l}^{*}\varphi,\mathbb{P}_{r}^{*}\phi) = (\beta_{l}^{*}\psi_{\ell},\beta_{r}^{*}\zeta_{\ell})$. 

	We have from \eqref{am9} and \eqref{am109} that $((M^{\ell}_{l})^{*}(\varphi - \mathbb{P}_{l}^{*}\varphi),(M^{\ell}_{r})^{*}(\phi - \mathbb{P}_{r}^{*}\phi)) = (0,0)$ in $Q_{0,l}\times Q_{0,r}$ and also, in view of \eqref{am105} and \eqref{am106}, $(\varphi - \mathbb{P}^{*}_{l}\varphi,\phi - \mathbb{P}^{*}_{l}\phi) = (0,0)$ in $\mathcal{O}_{l}\times \mathcal{O}_{r}$.
	Then, by applying a classical {\it unique continuation} argument, we conclude that $(\varphi,\phi) = (\mathbb{P}^{*}_{l}\varphi,\mathbb{P}^{*}_{r}\phi)$ in $Q_{0,l}\times Q_{0,r}$. However, this implies $(\varphi,\phi) = (0,0)$ in $Q_{0,l}\times Q_{0,r}$, since 
        \[
	(0,0) = (\varphi(\ell_0,\cdot),\phi(\ell_0,\cdot)) = (\beta_{l}^{*}\psi_{\ell}(\ell_0,\cdot),\beta_{r}^{*}\zeta_{\ell}
	(\ell_0,\cdot)) = (\beta^{*}_{l},\beta^{*}_{r}).
        \]
 	In other words,
        \[
	(\varphi_n,\phi_n) \rightarrow (0,0) \quad \mbox{in}\,\, L^2(\mathcal{O}_l)\times L^2(\mathcal{O}_r).
        \]
   
	Then, taking into account \eqref{am16} and \eqref{am105}, we see that
        \[
	\iint_{\mathcal{O}_l}|\varphi_{n}|^2\,d\xi\,dt + \int_{0}^{\ell_0}|\varphi_{n}(\xi,0)|^2\,\,d\xi
	+ \iint_{\mathcal{O}_r}|\phi_{n}|^2 \,d\xi\,dt + \int_{\ell_0}^{L}|\phi_{n}(\xi,0)|^2\,d\xi\rightarrow 0,
        \]
which is obviously absurd.
    
	This proves~\eqref{am1800}.
	The remaining terms in $I(\varphi_T,\phi_T)$ can also be bounded by the right hand side of~\eqref{am18}, as an immediate consequence of Proposition~\ref{proplimitation_coef}.
\end{proof}
 
       
\subsection{Approximate controllability problem with linear constraint}\label{approximatedsection}
       
        	In this section, we prove the approximate controllability of \eqref{am7} subject to the linear constraint~\eqref{am12}.
       	More precisely, the following holds:
	
\begin{prop}\label{controlconstr}
       	Assume that $R>0$, $\ell_0\in (\ell_l,\ell_r)$ and $\ell\in C^1([0,T])$ satisfies $\ell_l < \ell(t) < \ell_r$ for all $t\in [0,T]$,  $\ell(0) = \ell_0$ and $\|\ell'\|_{C^0([0,T])} \leq R$. 
	Then, for any $\varepsilon > 0$, any data $p_0\in H_0^1(0,\ell_0)$ and $q_0\in H_0^1(\ell_0,L)$ and  any $\ell_T\in (\ell_l,\ell_r)$, there exist controls $(h^{\ell}_{l,\varepsilon},h^{\ell}_{r,\varepsilon})\in L^2(\mathcal{O}_l)\times L^2(\mathcal{O}_r)$ and associated solutions to \eqref{am7}, with 	
        \[
   	\left\{
   		\begin{array}{l}
			p\in L^2(0,T;H^2(0,\ell_0))\cap H^1(0,T;L^2(0,\ell_0)),\\
			\noalign{\smallskip}\displaystyle	
        			q\in L^2(0,T;H^2(\ell_0,L))\cap H^1(0,T;L^2(\ell_0,L)),
   		\end{array}
  	\right.
        \]
satisfying the approximate controllability condition  
      	\begin{equation}\label{approximatedcontrol}
      	\|(p(\cdot,T),q(\cdot,T)\|_{L^2(0,\ell_0)\times L^2(\ell_0,L)} \leq \varepsilon
      	\end{equation}
and the linear constraint \eqref{am12}.
	Furthermore, the controls can be chosen satisfying
        \begin{equation}\label{am101}
	\begin{alignedat}{2}
	\|(h^{\ell}_{l,\varepsilon}1_{\mathcal{\omega}_l}),(h^{\ell}_{r,\varepsilon}1_{\mathcal{\omega}_l}))\|_{L^2(Q_{0,l})\times 
	L^2(Q_{0,r})}
	\leq C\left(\|(p_0,q_0)\|_{L^2 \times L^2} + |\ell_0 -\ell_T|\right),
	\end{alignedat}
        \end{equation}
where the constant $C > 0$ depends only on $\ell_l,\ell_r, \omega_l,\omega_r, T$ and $R$.             
\end{prop}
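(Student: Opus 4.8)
The plan is to prove Proposition~\ref{controlconstr} by a penalized Hilbert Uniqueness Method, preceded by a reduction that isolates the linear constraint~\eqref{am12}, and with the improved observability inequality of Proposition~\ref{improved} as the source of coercivity; the argument is of the type in~\cite{nakoulima}.

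\textbf{Step 1 (splitting off the constraint).} With $\ell$ fixed, $(\psi_\ell,\zeta_\ell)$ is the fixed weak solution of~\eqref{am9}; by Proposition~\ref{proplimitation_coef} the number $\kappa_\ell:=\iint_{\mathcal{O}_l}|\psi_\ell|^2\,d\xi\,dt+\iint_{\mathcal{O}_r}|\zeta_\ell|^2\,d\xi\,dt$ satisfies $\kappa_\ell\geq C_0^2/2>0$. I look for controls of the form $h_l=v_l+\rho\,\psi_\ell 1_{\omega_l}$ and $h_r=v_r+\rho\,\zeta_\ell 1_{\omega_r}$, with $\rho\in\mathbb{R}$ and $(v_l,v_r)\in L^2(\mathcal{O}_l)\times L^2(\mathcal{O}_r)$ such that $\iint_{\mathcal{O}_l}v_l\psi_\ell\,d\xi\,dt=\iint_{\mathcal{O}_r}v_r\zeta_\ell\,d\xi\,dt=0$. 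For such controls, \eqref{am12} reads $\rho\,\kappa_\ell=\ell_T-\ell_0-\int_0^{\ell_0}p_0\psi_\ell(\cdot,0)\,d\xi-\int_{\ell_0}^{L}q_0\zeta_\ell(\cdot,0)\,d\xi$, which fixes $\rho$; and since $\psi_\ell$, $\zeta_\ell$ and their traces at $t=0$ are bounded in terms of $\ell_l,\ell_r,\omega_l,\omega_r,T,R$ only (from the proof of Proposition~\ref{proplimitation_coef}), one has $|\rho|\leq C(\|(p_0,q_0)\|_{L^2\times L^2}+|\ell_0-\ell_T|)$. By linearity write the solution of~\eqref{am7} as $(p,q)=(p^0,q^0)+(\widetilde p,\widetilde q)$, where $(p^0,q^0)$ solves~\eqref{am7} with data $(p_0,q_0)$ and control $(\rho\psi_\ell 1_{\omega_l},\rho\zeta_\ell 1_{\omega_r})$ --- hence $\|(p^0(\cdot,T),q^0(\cdot,T))\|_{L^2\times L^2}\leq C(\|(p_0,q_0)\|_{L^2\times L^2}+|\ell_0-\ell_T|)$ by a parabolic energy estimate and the bound on $\|\psi_\ell\|_{L^2(\mathcal{O}_l)}$, $\|\zeta_\ell\|_{L^2(\mathcal{O}_r)}$ --- and $(\widetilde p,\widetilde q)$ solves~\eqref{am7} with zero data and control $(v_l,v_r)$. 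It then suffices to choose such a $(v_l,v_r)$ with $\|(\widetilde p(\cdot,T)+p^0(\cdot,T),\widetilde q(\cdot,T)+q^0(\cdot,T))\|_{L^2(0,\ell_0)\times L^2(\ell_0,L)}\leq\varepsilon$.

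\textbf{Step 2 (the penalized functional, and coercivity --- the main point).} For $(\varphi_T,\phi_T)\in L^2(0,\ell_0)\times L^2(\ell_0,L)$ let $(\varphi,\phi)$ solve~\eqref{am1000} and set
\[
\begin{aligned}
J_\varepsilon(\varphi_T,\phi_T):={}&\frac12\left(\iint_{\mathcal{O}_l}|\varphi-\mathbb{P}^\ell_l\varphi|^2\,d\xi\,dt+\iint_{\mathcal{O}_r}|\phi-\mathbb{P}^\ell_r\phi|^2\,d\xi\,dt\right)\\
&+\varepsilon\|(\varphi_T,\phi_T)\|_{L^2\times L^2}+\int_0^{\ell_0}p^0(\xi,T)\varphi_T\,d\xi+\int_{\ell_0}^{L}q^0(\xi,T)\phi_T\,d\xi .
\end{aligned}
\]
This functional is convex and continuous, and strictly convex because $(\varphi_T,\phi_T)\mapsto(\varphi-\mathbb{P}^\ell_l\varphi|_{\mathcal{O}_l},\phi-\mathbb{P}^\ell_r\phi|_{\mathcal{O}_r})$ is injective (if the image vanishes then $I(\varphi_T,\phi_T)=0$ by Proposition~\ref{improved}, so $\varphi\equiv0$, $\phi\equiv0$ by a unique continuation argument, whence $\varphi_T=\varphi(\cdot,T)=0$ and $\phi_T=0$). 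The key is that $J_\varepsilon$ is coercive, in the sense $\liminf_{\|(\varphi_T,\phi_T)\|\to\infty}J_\varepsilon(\varphi_T,\phi_T)/\|(\varphi_T,\phi_T)\|\geq\varepsilon$. Given $(\varphi_T^n,\phi_T^n)$ with norm tending to $\infty$, normalize to $(\widehat\varphi^n_T,\widehat\phi^n_T)$; if $\liminf_n(\iint_{\mathcal{O}_l}|\widehat\varphi^n-\mathbb{P}^\ell_l\widehat\varphi^n|^2+\iint_{\mathcal{O}_r}|\widehat\phi^n-\mathbb{P}^\ell_r\widehat\phi^n|^2)>0$ the ratio tends to $+\infty$; otherwise this quantity tends to $0$ along a subsequence, so $I(\widehat\varphi^n_T,\widehat\phi^n_T)\to0$ by Proposition~\ref{improved}, hence $\widehat\varphi^n\to0$ in $L^2(\mathcal{O}_l)$, $\widehat\phi^n\to0$ in $L^2(\mathcal{O}_r)$; using the uniform bounds on the adjoint solutions, extract $(\widehat\varphi^n_T,\widehat\phi^n_T)\rightharpoonup(\varphi^\star_T,\phi^\star_T)$ and $(\widehat\varphi^n,\widehat\phi^n)\rightharpoonup(\varphi^\star,\phi^\star)$ with $(\varphi^\star,\phi^\star)$ solving~\eqref{am1000} for data $(\varphi^\star_T,\phi^\star_T)$ and vanishing on $\mathcal{O}_l\times\mathcal{O}_r$, hence vanishing everywhere by unique continuation, so $(\varphi^\star_T,\phi^\star_T)=(0,0)$; then the linear terms of $J_\varepsilon$ at $(\widehat\varphi^n_T,\widehat\phi^n_T)$ tend to $0$, giving $J_\varepsilon(\varphi^n_T,\phi^n_T)/\|(\varphi^n_T,\phi^n_T)\|\geq\varepsilon+o(1)$. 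Therefore $J_\varepsilon$ attains its infimum at some $(\widehat\varphi_T,\widehat\phi_T)$.

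\textbf{Step 3 (the control and the three conclusions).} With $(\widehat\varphi,\widehat\phi)$ the adjoint solution for $(\widehat\varphi_T,\widehat\phi_T)$, set $v_l:=(\widehat\varphi-\mathbb{P}^\ell_l\widehat\varphi)1_{\omega_l}$, $v_r:=(\widehat\phi-\mathbb{P}^\ell_r\widehat\phi)1_{\omega_r}$, $h^\ell_{l,\varepsilon}:=v_l+\rho\psi_\ell 1_{\omega_l}$, $h^\ell_{r,\varepsilon}:=v_r+\rho\zeta_\ell 1_{\omega_r}$. Since $\iint_{\mathcal{O}_l}v_l\psi_\ell=\iint_{\mathcal{O}_r}v_r\zeta_\ell=0$ by definition of $\mathbb{P}^\ell_l,\mathbb{P}^\ell_r$, the constraint~\eqref{am12} holds by the choice of $\rho$. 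The Euler--Lagrange equation for $J_\varepsilon$, together with the duality identity $\iint_{\mathcal{O}_l}v_l\varphi+\iint_{\mathcal{O}_r}v_r\phi=\int_0^{\ell_0}\widetilde p(\cdot,T)\varphi_T+\int_{\ell_0}^{L}\widetilde q(\cdot,T)\phi_T$ (between~\eqref{am7} with zero data and~\eqref{am1000}) and the identity $\iint_{\mathcal{O}_l}(\widehat\varphi-\mathbb{P}^\ell_l\widehat\varphi)(\varphi-\mathbb{P}^\ell_l\varphi)=\iint_{\mathcal{O}_l}v_l\varphi$, yields $(\widetilde p(\cdot,T)+p^0(\cdot,T),\widetilde q(\cdot,T)+q^0(\cdot,T))=-\varepsilon(\widehat\varphi_T,\widehat\phi_T)/\|(\widehat\varphi_T,\widehat\phi_T)\|$ when the minimizer is nonzero (and norm $\leq\varepsilon$ otherwise), i.e.~\eqref{approximatedcontrol} for $(p,q)=(p^0+\widetilde p,q^0+\widetilde q)$. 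The stated regularity of $(p,q)$ follows from maximal $L^2$-parabolic regularity for~\eqref{am7}, using $(p_0,q_0)\in H_0^1\times H_0^1$, $v_l\in L^2(\mathcal{O}_l)$, $v_r\in L^2(\mathcal{O}_r)$, $\psi_\ell\in L^2(\mathcal{O}_l)$, $\zeta_\ell\in L^2(\mathcal{O}_r)$ and the regularity of the coefficients (Remark~\ref{rmk:regul_coef}).

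\textbf{Step 4 (the uniform bound~\eqref{am101}, and the main obstacle).} From $J_\varepsilon(\widehat\varphi_T,\widehat\phi_T)\leq J_\varepsilon(0,0)=0$ we get $\frac12(\|v_l\|_{L^2(\mathcal{O}_l)}^2+\|v_r\|_{L^2(\mathcal{O}_r)}^2)\leq-\int_0^{\ell_0}p^0(\cdot,T)\widehat\varphi_T-\int_{\ell_0}^{L}q^0(\cdot,T)\widehat\phi_T$; testing~\eqref{am7} (with data $(p_0,q_0)$ and control $(\rho\psi_\ell 1_{\omega_l},\rho\zeta_\ell 1_{\omega_r})$) against $(\widehat\varphi,\widehat\phi)$ rewrites the right-hand side as a combination of $\rho\,\beta^\ell_l(\widehat\varphi)$, $\rho\,\beta^\ell_r(\widehat\phi)$, $\int_0^{\ell_0}p_0\widehat\varphi(\cdot,0)$ and $\int_{\ell_0}^{L}q_0\widehat\phi(\cdot,0)$. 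Since Proposition~\ref{improved} bounds each of $|\beta^\ell_l(\widehat\varphi)|,|\beta^\ell_r(\widehat\phi)|,\|\widehat\varphi(\cdot,0)\|_{L^2},\|\widehat\phi(\cdot,0)\|_{L^2}$ by $C\,I(\widehat\varphi_T,\widehat\phi_T)^{1/2}\leq C(\|v_l\|_{L^2(\mathcal{O}_l)}^2+\|v_r\|_{L^2(\mathcal{O}_r)}^2)^{1/2}$, and $|\rho|\leq C(\|(p_0,q_0)\|_{L^2\times L^2}+|\ell_0-\ell_T|)$, we obtain $\|(v_l,v_r)\|\leq C(\|(p_0,q_0)\|_{L^2\times L^2}+|\ell_0-\ell_T|)$, and adding $|\rho|(\|\psi_\ell\|_{L^2(\mathcal{O}_l)}+\|\zeta_\ell\|_{L^2(\mathcal{O}_r)})$ gives~\eqref{am101} with $C$ depending only on $\ell_l,\ell_r,\omega_l,\omega_r,T,R$. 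The delicate point is the coercivity in Step~2: this is exactly where the gain of Proposition~\ref{improved} over the plain observability estimate~\eqref{am16} matters, since \eqref{am16} controls $(\varphi(\cdot,0),\phi(\cdot,0))$ but not the part of $(\varphi_T,\phi_T)$ needed to treat the linear constraint, forcing one to observe the components of $(\varphi,\phi)$ orthogonal to $(\psi_\ell,\zeta_\ell)$ on $\mathcal{O}_l\times\mathcal{O}_r$ and recover the full functional $I$; a secondary care is keeping every constant uniform in $\varepsilon$ and dependent on $\ell$ only through $R$, which is guaranteed by Propositions~\ref{proplimitation_coef} and~\ref{improved}.
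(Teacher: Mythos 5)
Your proof is correct and follows essentially the same variational (penalized HUM) strategy as the paper: minimize a functional whose quadratic part is $\iint_{\mathcal{O}_l}|\varphi-\mathbb{P}^{\ell}_l\varphi|^2+\iint_{\mathcal{O}_r}|\phi-\mathbb{P}^{\ell}_r\phi|^2$, get coercivity and the final estimate \eqref{am101} from Propositions \ref{improved} and \ref{proplimitation_coef}, and read off the control from the Euler--Lagrange equation via the same duality identities. The only difference is organizational: you split off the constraint beforehand with the ansatz $h=(v_l,v_r)+\rho(\psi_\ell 1_{\omega_l},\zeta_\ell 1_{\omega_r})$ and a shifted target $-(p^0(\cdot,T),q^0(\cdot,T))$, whereas the paper encodes the constraint directly in $J_{\ell,\varepsilon}$ through the terms involving $M_\ell$ and $\beta^{\ell}_l,\beta^{\ell}_r$ and in the control formula \eqref{am22}; the two constructions are equivalent and yield the same bounds.
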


\begin{proof}
    	Let us first introduce the notation
        \[
    	M_\ell := \ell_T - \ell_0 - \int_{0}^{\ell_0}p_0(\xi)\psi_{\ell}(\xi,0)\,d\xi - \int_{\ell_0}^L q_0(\xi)\zeta_{\ell}(\xi,0)\,d\xi,
        \]
where the couple $(\psi_{\ell},\zeta_{\ell})$ is the unique solution to \eqref{am9}.
    	
	Now, for any given $\varepsilon > 0$, let us introduce the functional $J_{\ell,\varepsilon}: L^2(0,\ell_0)\times L^2(\ell_0,L) \mapsto\mathbb{R}$, defined as follows: 
	given $(\varphi_T,\phi_T)\in  L^2(0,\ell_0)\times L^2(\ell_0,L)$, we have
	\begin{equation}\label{am100}
	\begin{alignedat}{2}
    		J_{\ell,\varepsilon}(\varphi_{T},\phi_{T}) := &  \iint_{\mathcal{O}_l}|\varphi - \mathbb{P}^{\ell}_l\varphi|^2\,d\xi\,dt 
		+ \iint_{\mathcal{O}_r}|\phi - \mathbb{P}^{\ell}_r\phi|^2\,d\xi\,dt 
		+ \dfrac{\varepsilon}{2}\|(\varphi_T,\phi_T)\|_{L^2\times L^2}\\
    		& - \int_{0}^{\ell_0}p_0(\xi)\varphi(\xi,0)\,d\xi - \int_{\ell_0}^L q_0(\xi)\phi(\xi,0)\,d\xi   
		- \left[\beta^{\ell}_l(\varphi) + \beta^{\ell}_r(\phi)\right]\frac{M_\ell}{2},
	\end{alignedat}
        \end{equation}
where the couple $(\varphi,\phi)$ satisfies \eqref{am999}--\eqref{am1000}.
	
    	Using H\"older and Young inequalities, it is not difficult to check that $J_{\ell,\varepsilon}$ is a continuous, coercive and strictly convex functional.
	Therefore, $J_{\ell,\varepsilon}$ possesses a unique minimizer $(\varphi_{T}^{\varepsilon},\phi_{T}^{\varepsilon})\in L^2(0,\ell_0)\times L^2(\ell_0,L)$. The  corresponding solution to \eqref{am1000} will be denoted by $(\varphi_{\varepsilon},\phi_{\varepsilon})$.
	Then
	\begin{align}\label{eulerlagrange}	
	J_{\ell,\varepsilon}'(\varphi_{T}^{\varepsilon},\phi_{T}^{\varepsilon})(\varphi_{T},\phi_{T}) = 0 \quad \forall (\varphi_{T},
	\phi_{T})\in L^2(0,\ell_0)\times L^2(\ell_0,L),
	\end{align}	
where
        \[
	\begin{alignedat}{2}
	 J_{\ell,\varepsilon}'(\varphi_{T,\varepsilon},\phi_{T,\varepsilon})&(\varphi_{T},\phi_{T})
	 =  \iint_{\mathcal{O}_l}[\varphi_{\varepsilon} - \mathbb{P}^{\ell}_l(\varphi_{\varepsilon})]\varphi\,d\xi\,dt     
	     + \iint_{\mathcal{O}_r}[\phi_{\varepsilon} - \mathbb{P}^{\ell}_r(\phi_{\varepsilon})]\phi\,d\xi\,dt \nonumber\\
	     & + \dfrac{\varepsilon}{2\|\varphi_{T}^{\varepsilon}\|_{L^2}}\int_{0}^{\ell_0}\varphi_{T}^{\varepsilon}(\xi)\varphi_{T}
	     (\xi)\,d\xi 
	     + \dfrac{\varepsilon}{2\|\phi_{T}^{\varepsilon}\|_{L^2}}\int_{\ell_0}^{L}\phi_{T}^{\varepsilon}(\xi)\phi_{T}(\xi)\,d\xi\\
   	&- \int_{0}^{\ell_0}p_0(\xi)\varphi(\xi,0)\,d\xi - \int_{\ell_0}^Lq_0(\xi)\phi(\xi,0)\,d\xi - 
	\left[\beta^{\ell}_l(\varphi) + \beta^{\ell}_r(\phi)\right]\frac{M_\ell}{2},\nonumber
	\end{alignedat}
        \]
	Here, we have used the fact that $\langle\varphi_{\varepsilon} - \mathbb{P}^{\ell}_l(\varphi_{\varepsilon}),\mathbb{P}^{\ell}_l(\varphi)\rangle_{L^2(\mathcal{O}_l)}=0$ and $\langle\phi_{\varepsilon} - \mathbb{P}^{\ell}_r(\phi_{\varepsilon}),\mathbb{P}^{\ell}_r(\phi)\rangle_{L^2(\mathcal{O}_r)}=0$.
	
        Let us introduce
        \begin{equation}\label{am22}
	h^{\ell}_{l,\varepsilon}:= \left[\mathbb{P}^{\ell}_l({\varphi_{\varepsilon}}) - \varphi_{\varepsilon}\right] + {M_\ell\over2}
	{\psi_{\ell}\over \|\psi_{\ell}\|^2_{L^2(\mathcal{O}_l)}} \ \text{ and } \ 
	h^{\ell}_{r,\varepsilon} := \left[\mathbb{P}^{\ell}_r({\phi_{\varepsilon}}) - \phi_{\varepsilon}\right] + {M_\ell\over2}
	{\zeta_{\ell}\over \|\zeta_{\ell}\|^2_{L^2(\mathcal{O}_r)}}.
        \end{equation}

     	Let $(\varphi_{T},\phi_{T})\in L^2(0,\ell_0)\times  L^2(\ell_0,L)$ be given and let $(p,q)$ be  the solution to \eqref{am7} associated to the control pair $(h^{\ell}_{l,\varepsilon},h^{\ell}_{r,\varepsilon})$.
	Then, multiplying $\eqref{am7}$ by the solution $(\varphi,\phi)$ to $\eqref{am1000}$ and integrating in~$Q_{0,l}$ and~$Q_{0,r}$, we obtain
        \begin{equation}\label{am21}
\begin{alignedat}{2}
   	\iint_{\mathcal{O}_l}h^{\ell}_{l,\varepsilon}\varphi\,d\xi\,dt + \iint_{\mathcal{O}_r}h^{\ell}_{r,\varepsilon}\phi\,d\xi\,dt =&~ \int_{0}^{\ell_0}\left[p(\xi,T)\varphi(\xi,T) - p_0(\xi)\varphi(\xi,0)\right]d\xi \\
	&+ \int_{\ell_0}^{L}\left[q(\xi,T)\phi(\xi,T) - q_0(\xi)\phi(\xi,0)\right]d\xi.\\
  	 \end{alignedat}    
        \end{equation}
   	Taking into account \eqref{am22} and comparing \eqref{eulerlagrange} with \eqref{am21}, we get
        \[
	\begin{alignedat}{2}
	     \int_{0}^{\ell_0}p(\xi,T)\varphi_{T}(\xi)\,d\xi + \int_{\ell_0}^{L}q(\xi,T)\phi_{T}(\xi)\,d\xi = &
	     {\varepsilon\over2}\left(\int_{0}^{\ell_0}{\varphi_{T}^{\varepsilon}(\xi)\over \|\varphi_{T}^{\varepsilon}\|_{L^2}}	
	     \varphi_{T}(\xi)\,d\xi +
	     \int_{\ell_0}^{L}{\phi_{T}^{\varepsilon}(\xi)\over\|\phi_{T}^{\varepsilon}\|_{L^2}}\phi_{T}(\xi)\,d\xi \right),
	\end{alignedat}
        \]
for all $(\varphi_{T},\phi_{T})\in L^2(0,\ell_0)\times L^2(\ell_0,L)$.
	Therefore, the approximate controllability condition \eqref{approximatedcontrol} follows.
	Since we also have  
        \[
  	\begin{alignedat}{2}
    	\iint_{\mathcal{O}_l}h^{\ell}_{l,\varepsilon}\psi_{\ell}\,d\xi\,dt 
	+ \iint_{\mathcal{O}_r}h^{\ell}_{r,\varepsilon}\zeta_{\ell}\,d\xi\,dt 
	=&~ \iint_{\mathcal{O}_l}\left[\mathbb{P}^{\ell}_l(\varphi_{\varepsilon}) 
	- \varphi_{\varepsilon}\right]\psi_{\ell}\,d\xi\,dt + \frac{M_\ell}{2} \\
    	&+ \iint_{\mathcal{O}_r}\left[\mathbb{P}^{\ell}_r(\phi_{\varepsilon}) 
	- \phi_{\varepsilon}\right]\zeta_{\ell}\,d\xi\,dt + \frac{M_\ell}{2}\\
        =&~ M_\ell,
     	\end{alignedat}
        \]  
  	the pair $(h^{\ell}_{l,\varepsilon},h^{\ell}_{r,\varepsilon})$ satisfies \eqref{am12} and, consequently, $\mathcal{L}(T) = \ell_T$.
   
   	Finally, due to the fact that $(\varphi_{T,\varepsilon},\phi_{T,\varepsilon})$ is the minimum of $J_{\ell,\varepsilon}$, we have the inequality $J_{\ell,\varepsilon}(\varphi_{T}^{\varepsilon},\phi_{T}^{\varepsilon}) \leq J_{\ell,\varepsilon}(0,0) = 0$.
	Using this fact and the definition of $M_\ell$ and~\eqref{am18}, we deduce that there exist positive constants $C$ (depending on $\ell_l, \ell_r,R, \omega_l,\omega_r$ and $T$) such that
        \[	
     	\begin{alignedat}{2}
        \|(\varphi_{\varepsilon} - \mathbb{P}^{\ell}_l(\varphi_{\varepsilon}))\|_{L^2(\mathcal{O}_l)} 
        + \|(\phi_{\varepsilon} - \mathbb{P}^{\ell}_r(\phi_{\varepsilon}))\|_{L^2(\mathcal{O}_r)}
        \leq ~ C\left(\|p_0\|_{L^2(0,\ell_0)} + \|q_0\|_{L^2(\ell_0,L)} + |\ell_0 - \ell_T|\right) 
     	\end{alignedat}
        \]  
and
        \[
     	\left.
     	\begin{alignedat}{2}
      	\|h^{\ell}_{l,\varepsilon}\|_{L^2(\mathcal{O}_l)} +  \|h^{\ell}_{l,\varepsilon}\|_{L^2(\mathcal{O}_r)}
	&\leq C \left(\|(\varphi_{\varepsilon} - \mathbb{P}^{\ell}_l(\varphi_{\varepsilon}))\|_{L^2(\mathcal{O}_l)} 
        + \|(\phi_{\varepsilon} - \mathbb{P}^{\ell}_r(\phi_{\varepsilon}))\|_{L^2(\mathcal{O}_r)} + |M_\ell| \right) \\
      	&\leq   C\left(\|p_0\|_{L^2(0,\ell_0)} + \|q_0\|_{L^2(\ell_0,L)} + |\ell_0 -\ell_T|\right).
     	\end{alignedat}
     	\right.
        \]
        
	This ends the proof.
\end{proof}


\section{Controllability of the two-phase Stefan problem}\label{sec:nonlinear}

	In this section we prove Theorem~\ref{twophase1}.
	The proof relies on a fixed-point argument. 
	First, it will be convenient to recall some regularity properties for linear parabolic systems.
	
	Due to Propositions~\ref{appendix_thm_free} and~\ref{appendix_thm_free2}, the smoothing effect of~\eqref{am4} implies that we can assume that 
	$(p_0,q_0)\in  W_0^{1,4}(0,\ell_0) \times W_0^{1,4}(\ell_0,L)$ and we can consider smallness assumptions of $(p_0,q_0)$ in this space.

  
\subsection{A regularity property}\label{regularityy}

	First of all, let us assume that $(p_0 , q_0) \in W_0^{1,4}(0,\ell_0)\times  W_0^{1,4}(\ell_0,L)$. For any open interval $ I \subset\mathbb{R}$, let us introduce the Banach space
        \[  
        X^{4}(0,T;I) : = L^{4}(0,T;W^{2,4}(I))\cap W^{1,4}(0,T; L^4(I)).
        \]       
        On the other hand, let us consider the cylinder $G_l:= (\ell_l,\ell_0)\times (0,T)$, the H\"older semi-norms
        \[
        \langle u \rangle^{\kappa}_{\xi,G_l} := \sup_{{(\xi,t),(\xi',t)\in \overline{G}_{l}}\atop{\xi\neq \xi'}} \dfrac{|u(\xi,t) - u(\xi',t)|}
        {|\xi - \xi'|^{\kappa}}
        \]
and
        \[
        \langle u \rangle^{\kappa}_{t,G_l} := \sup_{{(\xi,t),(\xi,t')\in \overline{G}_{l}}\atop{t\neq t'}} 
        \dfrac{|u(\xi,t) - u(\xi,t')|}{|t - t'|^{\kappa}}
        \]
where $0 < \kappa < 1$ and the space $C^{\kappa,\kappa/2}(\overline{G}_{l})$ formed by the functions $u \in C^0(\overline{G}_{l})$ whose corresponding $\langle u \rangle^{\kappa}_{\xi,G_{l}}$ and $\langle u \rangle^{\kappa/2}_{t,G_{l}}$ are finite. It is known that $C^{\kappa,\kappa/2}(\overline{G}_{l})$ is a Banach space (see~\cite{ladyzhenskaya}) with the following norm: 
        \[
        \|u\|_{\kappa,\kappa/2;\overline{G}_l} := \|u\|_{C^0(\overline{G}_l)} 
        + \langle u \rangle^{\kappa}_{\xi,G_l}+ \langle u \rangle^{\kappa/2}_{t,G_l}.
        \]
	Finally, let us introduce the Banach space 
        \[
        C^{1 + \kappa,(1 + \kappa)/2}(\overline{G}_{l}) := \{ u \in  C^0(\overline{G}_{l}) 
        : u_{\xi} \in C^{\kappa,\kappa/2}(\overline{G}_{l}),\ \ \langle u \rangle^{(1 + \kappa)/2}_{t,G_l} < +\infty \}.
        \]

	Obviously, we can introduce similar quantities and spaces for functions defined in~$G_{r} := (\ell_0,\ell_r)\times (0,T)$.
	The following result holds:
	
\begin{lem}\label{lemmaregul}
 	Let us assume that $\ell_0, \ell_T \in (\ell_l,\ell_r)$ and $(p_0 , q_0) \in W_0^{1,4}(0,\ell_0)\times W_0^{1,4}(\ell_0,L)$. 
	Then, the states $(p,q)$, furnished by Proposition~\ref{controlconstr} satisfy
	\[
         (p,q) \in C^{1 + \kappa,(1 + \kappa)/2}(\overline{G}_{l})\times 
         C^{1 + \kappa,(1 + \kappa)/2}(\overline{G}_{r})~~\mbox{for}~\kappa = 1/4.
        \]
        Furthermore, there exists $C > 0$, depending on $\ell_l$, $\ell_r$, $\omega_l$, $\omega_r$, $T$ and~$R$, such that
        \begin{equation}\label{regularityyyy}
	\|p\|_{1+\kappa,(1 + \kappa)/2;\overline G_{l}} 
	+ \|q\|_{1+\kappa,(1 + \kappa)/2;\overline G_{r}} 
	\leq C\left(\|(p_0,q_0)\|_{W_0^{1,4}\times W_0^{1,4}} + |\ell_0 - \ell_T|\right).
        \end{equation}
\end{lem}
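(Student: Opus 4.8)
The plan is to combine the $L^2$-bound on the control pair from Proposition~\ref{controlconstr} with classical maximal-regularity and parabolic Schauder estimates applied to the equations for $p$ and $q$ in~\eqref{am7}, noting that the coefficients $d^\ell_l, b^\ell_l, d^\ell_r, b^\ell_r$ are under control by Remark~\ref{rmk:regul_coef} (with bounds depending only on $R$, $\ell_l$, $\ell_r$, $T$). I would proceed by a bootstrap in integrability: first pass from $L^2$ to $L^4$-in-time regularity, then use an embedding into a Hölder space near the interface $\xi=\ell_0$, which is an interior lateral boundary for the cylinders $G_l = (\ell_l,\ell_0)\times(0,T)$ and $G_r=(\ell_0,\ell_r)\times(0,T)$.

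First I would observe that, since $(p_0,q_0)\in W_0^{1,4}$ and $(h^\ell_{l,\varepsilon}1_{\omega_l}, h^\ell_{r,\varepsilon}1_{\omega_r})\in L^2\subset$ (after the smoothing argument invoked at the start of Section~\ref{sec:nonlinear}) the relevant source space, the states $(p,q)$ obtained in Proposition~\ref{controlconstr} actually lie in $X^4(0,T;(0,\ell_0))\times X^4(0,T;(\ell_0,L))$; this follows from the $L^p$ maximal regularity theory for parabolic equations with the bounded coefficients $d^\ell$, $b^\ell$ and Dirichlet data at $\xi=0$, $\xi=\ell_0$, $\xi=L$, with a constant depending only on the stated parameters. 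Here one must be slightly careful that the control is only in $L^2$, not $L^4$: this is exactly why the paragraph preceding Section~\ref{regularityy} reduces to data in $W_0^{1,4}$ and small in that space, so that one may in fact work from the start with $L^4$-regular controls (or, alternatively, split the time interval and use the parabolic smoothing on $(T/2,T)$ while handling $(0,T/2)$ by the regularity of the initial data). I would then invoke the embedding
\[
X^4(0,T;I)\hookrightarrow C^{1+\kappa,(1+\kappa)/2}(\overline{I\times(0,T)})\quad\text{for }\kappa=1-\tfrac{3}{4}=\tfrac14
\]
(the anisotropic Sobolev embedding of Ladyzhenskaya–Solonnikov–Uraltseva, \cite{ladyzhenskaya}), which is valid in one space dimension precisely when the integrability exponent $p=4$ exceeds $n+2=3$. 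Restricting to the subcylinders $G_l$ and $G_r$, which stay away from nothing problematic (the Dirichlet conditions at the endpoints are harmless and the coefficients are uniformly nice there), gives the claimed Hölder regularity, and the norm estimate~\eqref{regularityyyy} is obtained by chaining the maximal-regularity bound, the embedding constant, and the control estimate~\eqref{am101}, all of whose constants depend only on $\ell_l,\ell_r,\omega_l,\omega_r,T,R$.

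The main obstacle I anticipate is the mismatch between the $L^2$ control regularity coming out of Proposition~\ref{controlconstr} and the $L^4$ source regularity needed to enter $X^4$ and hence $C^{1+\kappa,(1+\kappa)/2}$: one must genuinely exploit the reduction to $W_0^{1,4}$-small data announced at the beginning of Section~\ref{sec:nonlinear}, and track that the minimization problem defining the control can be re-run in the $L^4$ framework (or that the control, being of the form $\mathbb{P}^\ell_l\varphi_\varepsilon - \varphi_\varepsilon$ plus a multiple of $\psi_\ell$, inherits enough integrability from the adjoint states $\psi_\ell,\zeta_\ell$, which are smooth away from $t=T$). A secondary, more routine point is to verify uniformity of all constants in $\ell\in C^1([0,T])$ with $\|\ell'\|_{C^0}\le R$; this is guaranteed because the coefficients in~\eqref{coefficients_diffeom} are built from the fixed smooth function $G$ and $\ell'$, so their $C^0$ (resp. $C^1$) norms on $\overline G_l$, $\overline G_r$ are bounded in terms of $R$ alone, as recorded in Remarks~\ref{remark_appen_free} and~\ref{rmk:regul_coef}.
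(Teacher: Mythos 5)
There is a genuine gap in your treatment of what you yourself identify as ``the main obstacle'': the mismatch between the $L^2$ regularity of the controls from Proposition~\ref{controlconstr} and the $L^4$ source regularity needed to place the state in $X^4$. Your proposed resolutions do not work. The reduction announced at the beginning of Section~\ref{sec:nonlinear} concerns only the \emph{initial data} $(p_0,q_0)\in W_0^{1,4}\times W_0^{1,4}$; it does not upgrade the controls, which are of the form $\mathbb{P}^{\ell}_l\varphi_{\varepsilon}-\varphi_{\varepsilon}$ plus a multiple of $\psi_{\ell}$ and are in general only $L^2$ (the adjoint states are merely $L^2$-regular near $t=T$, where the final datum is rough and the boundary value $1$ is incompatible with $\varphi(\cdot,T)$). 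Re-running the minimization ``in the $L^4$ framework'' is a different construction that is neither carried out nor obviously compatible with the constraint \eqref{am12} and the estimate \eqref{am101}. Your alternative of splitting time and using smoothing on $(T/2,T)$ also fails: on $(T/2,T)$ the control still enters the equation as an $L^2$-only right-hand side, so maximal $L^4$ regularity is unavailable there regardless of how smooth the solution is at $t=T/2$. Consequently your first step --- global $X^4(0,T;(0,\ell_0))\times X^4(0,T;(\ell_0,L))$ regularity of $(p,q)$ --- is not justified and, as stated, is not true in general.

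The point you miss is that the lemma only claims Hölder regularity on $\overline G_l=[\ell_l,\ell_0]\times[0,T]$ and $\overline G_r=[\ell_0,\ell_r]\times[0,T]$, which are \emph{disjoint from the control regions} $\omega_l\subset\subset(0,\ell_l)$ and $\omega_r\subset\subset(\ell_r,L)$. The paper exploits exactly this: writing $p=y+f$ with $f\in X^4(0,T;(0,\ell_0))$ a lift of $p_0$, the equation for $y$ has, on a strip $(\ell_l-\sigma,\ell_l+\sigma)\times(0,T)$ away from $\omega_l$, an $L^4$ right-hand side (only the terms coming from $f$), so \emph{local} parabolic $L^4$ estimates give $y\in X^4(0,T;(\ell_l-\sigma/2,\ell_l+\sigma/2))$, controlled via the global energy estimate and \eqref{am101}. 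Then, since on $(\ell_l,\ell_0)$ the control does not act at all, one uses the regularity of the trace $y(\ell_l,\cdot)$ and the results of \cite[Propositions 9.2.3 and 9.2.5]{wu} to conclude $y\in X^4(0,T;(\ell_l,\ell_0))$ with the bound \eqref{am112}; the symmetric argument handles $q$, and only then does the embedding $X^4\hookrightarrow C^{1+\kappa,(1+\kappa)/2}$ with $\kappa=1/4$ (which you state correctly) yield \eqref{regularityyyy}. So your final embedding step and the uniformity-in-$R$ discussion are fine, but the core of the proof --- obtaining $X^4$ regularity despite the $L^2$ control, by localizing away from $\omega_l,\omega_r$ rather than by upgrading the control --- is missing from your argument.
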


\begin{proof}
      Clearly, due the regularity of~$p_0$, there exists a function $f \in X^{4}(0,T;(0,\ell_0))$ such that $f(0,t) = f(\ell_0,t) = 0$, for $t \in (0,T)$, and $f(\xi,0) = p_0(\xi)$, for $\xi\in (0,\ell_0)$.
      Consequently, the state $p$, provided by Proposition~\ref{controlconstr}, can be written in the form $p = y + f$, where $y \in L^2(0,T; H^2(0,\ell_0))\cap H^1(0,T;L^2(0,\ell_0))$ is the unique strong solution of the following problem:
        \begin{equation}\label{solutionnnu}
            \left\{
            \begin{array}{lll}
            y_t - d^{\ell}_ly_{\xi\xi} + b^{\ell}_ly_{\xi} = F &\mbox{in} & Q_{0,l}\\
            y(0,\cdot) = y(\ell_0,\cdot) = 0 &\mbox{in} & (0,T),\\
            y(\cdot,0) = 0 &\mbox{in} & (0,\ell_0), 
            \end{array}
            \right.
        \end{equation}
where $ F = h^{\ell}_{l,\varepsilon}1_{\omega_l} - f_t + d^{\ell}_lf_{\xi\xi} - b^{\ell}_lf_{\xi}$. 
		
	Now, let $\sigma > 0$ be such that $\omega_l\subset\subset (0,\ell_l - \sigma)$ and, moreover, $G^{\sigma}_{l} := (\ell_l - \sigma,\ell_l + \sigma)\times (0,T)\subset Q_{0,l}$.
	We can easily check that $F \in L^4(0,T; L^4(\ell_l - \sigma,\ell_l + \sigma))$.
	Therefore, from local parabolic regularity results, we obtain that $y \in X^{4}(0,T;(\ell_l - \sigma/2, \ell_l + \sigma/2))$ and
        \[
        \|y\|_{X^{4}(0,T;(\ell_l - \sigma/2, \ell_l + \sigma/2))}\leq
        C\left(\|F\|_{L^4(0,T; L^4(\ell_l - \sigma,\ell_l + \sigma))} 
        + \|y\|_{L^2(0,T; H^2(0,\ell_0))\cap H^1(0,T;L^2(0,\ell_0))} \right),
        \]		
	where $C$ only depends on $\|d^{\ell}_l\|_{\infty}$, $\|b^{\ell}_l\|_{\infty}$, $\ell_l$, $\ell_0$ and~$\sigma$.
	
        Next, using standard parabolic energy estimates and~\eqref{am101}, we get
        \[
        \|y\|_{{X^{4}(0,T;(\ell_l - \sigma/2, \ell_l + \sigma/2))}} \leq 
        C\left(\|(p_0,q_0)\|_{ W_0^{1,4}\times W_0^{1,4}} + |\ell_0 - \ell_T|\right)
        \]
for some $C > 0$ as above.
	Here, we have used that $\|d^{\ell}_l\|_{\infty}$ and~$\|b^{\ell}_l\|_{\infty}$  are bounded in terms of $R$.
	Finally, using this inequality, the regularity of the trace $y(\ell_l,\cdot)$, the fact that $y$ is a strong solution to \eqref{solutionnnu} and~\cite[Propositions $9.2.3$ and $9.2.5$]{wu}, we conclude that $y\in X^{4}(0,T;(\ell_l,\ell_0))$ and, moreover,
        \begin{equation}\label{am112}
        \|y\|_{X^{4}(0,T;(\ell_l,\ell_0))} \leq C\left(\|(p_0,q_0)\|_{W_0^{1,4}\times W_0^{1,4}} + |\ell_0 - \ell_T|\right)
        \end{equation}
 for a new $C > 0$.
 
	In a similar way, we can write $ q = z + g$, where $g\in X^{4}(0,T;(\ell_0,L))$ is a shift function for the initial data $q_0$ and $z\in X^{4}(0,T;(\ell_0,\ell_r))$ satisfies 
        \begin{equation}\label{am113}
        \|z\|_{X^{4}(0,T;(\ell_0,\ell_r))} \leq C\left(\|(p_0,q_0)\|_{W_0^{1,4}\times W_0^{1,4}} + |\ell_0 - \ell_T|\right).
        \end{equation}
		
	Then, the estimate in \eqref{regularityyyy} is a immediate consequence of \eqref{am112}-\eqref{am113} and the following embedding 
	from \cite[Lemma $2.2$]{bodart}
\[
       X^{4}(0,T;(\ell_l,\ell_0))\times X^{4}(0,T;(\ell_0,\ell_r)) \hookrightarrow   C^{1 + \kappa,(1 + \kappa)/2}(\overline{G}_{l})\times C^{1 + \kappa,(1 + \kappa)/2}(\overline{G}_{r}),
\] 
    where $\kappa = 1/4$.  
\end{proof}\\
     Let us introduce the function $\theta : [0,T] \mapsto\mathbb{R}$, given by 
\begin{equation}\label{thetadefi}
                    \theta(t) = d_rq_{\xi}(\ell_0,t) - d_lp_{\xi}(\ell_0,t).
\end{equation}
    Then, as an immediate consequence of \eqref{regularityyyy}, we get that $\theta\in C^{1/8}([0,T])$ and, moreover, there exists a positive constant $C$ (depending on $\ell_l,\ell_r,\omega_l,\omega_r, T$ and $R$) such that
\begin{equation}\label{am104}
        \|\theta\|_{C^{1/8}([0,T])} \leq C\left(\|(p_0,q_0)\|_{W_0^{1,4}\times  W_0^{1,4}}  + |\ell_0 - \ell_T|\right).
\end{equation}


\subsection{A fixed-point argument} 

	In this Section we will achieve the proof of Theorem \ref{twophase1}. It will be a consequence of the following uniform approximate controllability result:	 
	\begin{thm}\label{twophase1_approx}
    Assume that $R>0$ is given. 
	Then, there exists $\delta > 0$ such that, for any $p_0\in W_0^{1,4}(0,\ell_0)$ with $p_0 \geq 0$, any $q_0\in W_0^{1,4}(\ell_0,L)$ with $q_0 \leq 0$, any $\ell_0, \ell_T\in (\ell_l,\ell_r)$ satisfying 
$$
	\|p_0\|_{W_0^{1,4}(0,\ell_0)} + \|q_0\|_{W_0^{1,4}(\ell_0,L)} + |\ell_0 -\ell_T| \leq \delta
$$	
	and any $\varepsilon > 0$, there exist controls $(h_l^\varepsilon,h_r^\varepsilon)\in L^2(\mathcal{O}_l)\times L^2(\mathcal{O}_r)$ and associated solutions to~\eqref{am4}, with 	
\begin{equation}\label{regularityfixedpoint}
   	\left\{
   		\begin{array}{l}
     			\ell_{\varepsilon}\in C^{1}([0,T])~\mbox{and}~ \ell_{\varepsilon}(t) \in (\ell_l,\ell_r)\,\forall\, t\in [0,T],~ \|\ell'_{\varepsilon}\|_{C^0([0,T])}\leq R,\\
     			\noalign{\smallskip}\displaystyle	
			p_{\varepsilon}\in L^2(0,T;H^2(0,\ell_0))\cap H^1(0,T;L^2(0,\ell_0)),\\
			\noalign{\smallskip}\displaystyle	
        			q_{\varepsilon}\in L^2(0,T;H^2(\ell_0,L))\cap H^1(0,T;L^2(\ell_0,L)),
   		\end{array}
  	 \right.
\end{equation}
      	 satisfying the exact-approximate controllability condition  
      	\begin{equation}\label{approximatedcontrol2}
     \ell_{\varepsilon}(T) = \ell_T ~~\mbox{and}~~\|(p_{\varepsilon}(\cdot,T),q_{\varepsilon}(\cdot,T))\|_{L^2(0,\ell_0)\times L^2(\ell_0,L)} \leq \varepsilon.
      	\end{equation}
     Moreover, the controls can be found satisfying the following uniform estimate with respect to $\varepsilon$:
\begin{equation}\label{am1012}
	\begin{alignedat}{2}
		\|(h_l^\varepsilon1_{\mathcal{\omega}_l}),(h_r^\varepsilon1_{\mathcal{\omega}_r}))\|_{L^2(Q_{0,l})\times L^2(Q_{0,r})}
		\leq C\left(\|(p_0,q_0)\|_{W^{1,4} \times W^{1,4}} + |\ell_0 -\ell_T|\right)
	\end{alignedat}
\end{equation}              
 for some positive $C$ (depending on $\ell_l,\ell_r,\omega_l,\omega_r, T$ and $R$).
\end{thm}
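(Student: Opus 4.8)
\textbf{Proof proposal for Theorem~\ref{twophase1_approx}.}

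The plan is to set up a fixed-point map on the interface position $\ell$ and apply Schauder's theorem (rather than Banach's, since we only expect compactness, not contraction). First I would fix $R>0$ and work in a convex compact set of admissible interface functions, for instance $\mathcal{K} := \{\ell\in C^{1}([0,T]) : \ell(0)=\ell_0,\ \ell([0,T])\subset(\ell_l,\ell_r),\ \|\ell'\|_{C^0([0,T])}\leq R,\ \theta=\ell'\in C^{1/8}([0,T])\ \text{with}\ \|\theta\|_{C^{1/8}}\leq C_*\}$, where $C_*$ is the constant from \eqref{am104}. Note $\mathcal{K}$ is convex and, by Arzel\`a--Ascoli (because of the uniform $C^{1/8}$ bound on $\ell'$), relatively compact in $C^1([0,T])$; closedness is routine. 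Given $\bar\ell\in\mathcal{K}$, I apply Proposition~\ref{controlconstr} with the frozen coefficients $d^{\bar\ell}_l,b^{\bar\ell}_l,d^{\bar\ell}_r,b^{\bar\ell}_r$ to obtain controls $(h^{\bar\ell}_{l,\varepsilon},h^{\bar\ell}_{r,\varepsilon})$ and states $(p,q)$ solving \eqref{am7} with the approximate controllability bound \eqref{approximatedcontrol} and the linear constraint \eqref{am12}, which is exactly $\mathcal{L}(T)=\ell_T$, together with the control estimate \eqref{am101}. Then I define $\Lambda(\bar\ell) := \mathcal{L}$, where $\mathcal{L}(t) = \ell_0 - \int_0^t[d_l p_\xi(\ell_0,\tau) - d_r q_\xi(\ell_0,\tau)]\,d\tau = \ell_0 + \int_0^t\theta(\tau)\,d\tau$ with $\theta$ as in \eqref{thetadefi}.

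The next step is to check that $\Lambda$ maps $\mathcal{K}$ into itself for $\delta$ small enough. By Lemma~\ref{lemmaregul} and \eqref{am104}, $\theta=\mathcal{L}'\in C^{1/8}([0,T])$ with $\|\theta\|_{C^{1/8}([0,T])}\leq C(\|(p_0,q_0)\|_{W^{1,4}\times W^{1,4}} + |\ell_0-\ell_T|)\leq C\delta$; choosing $\delta$ so that $C\delta\leq R$ and $C\delta\leq C_*$ gives both the derivative bound and the H\"older bound. The interior constraint $\mathcal{L}([0,T])\subset(\ell_l,\ell_r)$ follows because $|\mathcal{L}(t)-\ell_0|\leq T\|\theta\|_{C^0}\leq TC\delta$, so for $\delta$ small (depending on $\mathrm{dist}(\ell_0,\{\ell_l,\ell_r\})$, which we can absorb into a uniform smallness using $|\ell_0-\ell_T|\leq\delta$ and $\ell_T\in(\ell_l,\ell_r)$ fixed) $\mathcal{L}$ stays strictly inside. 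Finally $\mathcal{L}(0)=\ell_0$ by construction. So $\Lambda(\mathcal{K})\subset\mathcal{K}$.

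Then I verify continuity of $\Lambda$ on $\mathcal{K}$ (in the $C^1$ topology). If $\bar\ell_n\to\bar\ell$ in $C^1([0,T])$, the coefficients $d^{\bar\ell_n}_l,b^{\bar\ell_n}_l,\dots$ converge in the appropriate norms (using smoothness of $G,G^{-1}$ and Remark~\ref{rmk:regul_coef}); the adjoint states $(\psi_{\bar\ell_n},\zeta_{\bar\ell_n})$ and the projectors $\mathbb{P}^{\bar\ell_n}_l,\mathbb{P}^{\bar\ell_n}_r$ converge; the minimizers of $J_{\bar\ell_n,\varepsilon}$ converge (the functional depends continuously on $\bar\ell$ and is uniformly coercive by Proposition~\ref{improved} and Proposition~\ref{proplimitation_coef} with constants uniform over $\mathcal{K}$); hence the controls and states converge, and by Lemma~\ref{lemmaregul} the convergence of $\theta_n\to\theta$ is in $C^{1/8}$, in particular in $C^0$, giving $\mathcal{L}_n\to\mathcal{L}$ in $C^1$. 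Compactness of $\Lambda$ is immediate since its range lies in the compact set $\mathcal{K}$. Schauder's fixed-point theorem then yields $\ell_\varepsilon\in\mathcal{K}$ with $\Lambda(\ell_\varepsilon)=\ell_\varepsilon$; unwinding, the corresponding $(p_\varepsilon,q_\varepsilon,\ell_\varepsilon)$ solves \eqref{am4} (the fixed-point identity $\ell_\varepsilon(t)=\ell_0-\int_0^t[d_lp_{\varepsilon,\xi}(\ell_0,\tau)-d_rq_{\varepsilon,\xi}(\ell_0,\tau)]\,d\tau$ is precisely \eqref{am4}$_7$), with $\ell_\varepsilon(T)=\ell_T$ from the constraint, $\|(p_\varepsilon(\cdot,T),q_\varepsilon(\cdot,T))\|\leq\varepsilon$ from \eqref{approximatedcontrol}, the regularity \eqref{regularityfixedpoint}, and the uniform bound \eqref{am1012} from \eqref{am101}. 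Finally, one passes $\varepsilon\to0$: the uniform estimate \eqref{am1012} gives a weakly convergent subsequence of controls, the states converge strongly enough (parabolic compactness plus the fixed-point structure) to a limit solving \eqref{am4} with $p(\cdot,T)=0$, $q(\cdot,T)=0$, $\ell(T)=\ell_T$, which proves Theorem~\ref{twophase1}.

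\textbf{Main obstacle.} The delicate point is the continuity of $\bar\ell\mapsto\Lambda(\bar\ell)$ with all constants uniform over $\mathcal{K}$: one must ensure that the observability/Carleman constants, the lower bound $C_0$ of Proposition~\ref{proplimitation_coef}, and the regularity constant in Lemma~\ref{lemmaregul} do not degenerate as $\bar\ell$ varies, and that the minimizers of the quadratic functionals $J_{\bar\ell,\varepsilon}$ depend continuously on $\bar\ell$ — including the subtle non-smooth term $\tfrac{\varepsilon}{2}\|(\varphi_T,\phi_T)\|_{L^2\times L^2}$, whose Euler--Lagrange equation involves $\varphi_T^\varepsilon/\|\varphi_T^\varepsilon\|$, so one needs the minimizer to stay away from $0$ (true whenever $M_{\bar\ell}\neq0$, and otherwise handled separately). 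Keeping these estimates uniform is what forces the compact set $\mathcal{K}$ and the Schauder (rather than Banach) framework, and is the real work of the proof.
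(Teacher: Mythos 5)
Your proposal is correct and follows essentially the same route as the paper: a Schauder fixed-point argument for the map $\ell\mapsto\mathcal{L}$ built from the constrained approximate-controllability result of Proposition~\ref{controlconstr}, with the self-mapping property and compactness supplied by Lemma~\ref{lemmaregul} and \eqref{am104}, and the final passage $\varepsilon\to 0$ as in the paper. The only cosmetic difference is that you encode the $C^{1/8}$ bound on $\ell'$ directly in the fixed-point set to make it compact (in which case you should replace the open constraint $\ell([0,T])\subset(\ell_l,\ell_r)$ by closed bounds $\tilde\ell_l\le\ell\le\tilde\ell_r$ with $\ell_l<\tilde\ell_l<\tilde\ell_r<\ell_r$, as the paper does, since otherwise the set is not closed), whereas the paper keeps the set $\mathcal{A}_R$ and shows its image is bounded in $C^{1+1/8}([0,T])$, hence precompact in $C^{1}([0,T])$.
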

\begin{proof}
	Given $\ell_l < \tilde{\ell}_l < \tilde{\ell}_r < \ell_r$ and $R > 0$, we define the set:  
     $$
     \mathcal{A}_R := \{\ell\in C^{1}([0,T]): \tilde{\ell}_l \leq \ell(t) \leq \tilde{\ell}_r,~\forall t\in [0,T],~~\ell(0) = \ell_0,~ \|\ell'\|_{C^0([0,T])} \leq R\}.
     $$
     Obviously, $\mathcal{A}_R$ is a non-empty, closed and convex subset of $C^{1 }([0,T])$. Let us also introduce the mapping $\Lambda_{\varepsilon} : \mathcal{A}_R \mapsto C^{1}([0,T])$, given by
\begin{equation*}\label{fixedpointmap}
	      \Lambda_{\varepsilon}(\ell)= \mathcal{L},~~\hbox{with}~~\mathcal{L}(t) := \ell_0 - \displaystyle\int_{0}^{t}\left[d_l p_{\xi}(\ell_0, \tau ) - d_r q_{\xi}(\ell_0, \tau )\right]\,d\tau,
\end{equation*}
       where $(p,q)$ is the state associated to the control pair $(h^{\ell}_{l,\varepsilon},h^{\ell}_{r,\varepsilon})$ constructed as in the proof of Proposition \ref{controlconstr} (recall Lemma \ref{lemmaregul}) and, therefore, $\mathcal{L}(T) =\ell_T$. Thanks to  \eqref{thetadefi} and \eqref{am104}, we have that $\mathcal{L}\in C^{1 }([0,T])$.

       Let us check that $\Lambda_{\varepsilon}$ satisfies the conditions of Schauder's Fixed-Point Theorem.
     \begin{itemize}
     
    \item {\it $\Lambda_{\varepsilon}$ is continuous}: Indeed, let the $\ell_n (n \geq 1)$ and $\ell$ belong to $\mathcal{A}_{R}$ and assume that $\ell_n \rightarrow \ell$ in $C^{1}([0,T])$. We must prove that $\Lambda_{\varepsilon}(\ell_n) \rightarrow \Lambda_{\varepsilon}(\ell)$ in $C^{1}([0,T])$. To that end, we will first prove that the corresponding solutions to \eqref{am9} satisfy
    \begin{equation}\label{conv:psi-zeta}
    (\psi_{\ell_n},\zeta_{\ell_n}) \rightarrow (\psi_{\ell},\zeta_{\ell})\quad\mbox{strongly in}\quad L^2(Q_{0,l})\times L^2(Q_{0,r}).
    \end{equation}
 	Let $f \in L^2(0,T; H^2(0,\ell_0))\times H^1(0,T;L^2(0,\ell_0))$ be such that $f(0,\cdot) = 0$ and   $f(\ell_0,\cdot) = 1$  on $(0,T)$ and let us put $\psi_{\ell_n} = \Psi_{\ell_n} + f$ and $\psi_{\ell} = \Psi_{\ell} + f$. It is then clear that $y_{\ell_n} :=  \Psi_{\ell_n} - \Psi_{\ell}$ is the unique weak solution to:
\begin{equation*}
 	 \left\{
 	 \begin{array}{lll}
 	 (M^{\ell_n}_{l})^{*}(y_{\ell_n}) = F_{\ell_n} & \mbox{in} & Q_{0,l},\\
     y_{\ell_n}(0,\cdot) = y_{\ell_n}(\ell_0,\cdot) = 0 & \mbox{on} & (0,T),\\
 	 y_{\ell_n}(\cdot,0) = 0 & \mbox{in} & (0,\ell_0),
 	 \end{array}
 	 \right.
\end{equation*}
		where $F_{\ell_n} \in L^2(0,T ;H^{-1}(0,\ell_0))$ is given by
\[
   \begin{alignedat}{2}
   F_{\ell_n} := &  (d^{\ell}_{l,\xi\xi} - d^{\ell_n}_{l,\xi\xi})f + 2(d^{\ell}_{l,\xi} - d^{\ell_{n}}_{l,\xi})f_{\xi} + (d^{\ell}_{l} - d^{\ell_{n}}_{l})f_{\xi\xi} + (b^{\ell}_{l,\xi} - b^{\ell_n}_{l,\xi})f \\
    & + (b^{\ell}_{l} - b^{\ell_n}_{l})f_{\xi} + ((d^{\ell}_{l} - d^{\ell_n}_{l})\Psi_{\ell})_{\xi\xi} + ((b^{\ell}_{l} - b^{\ell_n}_{l})\Psi_{\ell})_{\xi}.   
   \end{alignedat}
\]   
	 Then, using the fact that $(d^{\ell_n}_{l},b^{\ell_n}_{l})$ are uniformly bounded in the space 
	$C^{1}(\overline{Q}_{0,l})\times C^{0}(\overline{Q}_{0,l})$ and the $(d^{\ell_n}_{l,\xi\xi},b^{\ell_n}_{l,\xi})$ are uniformly bounded in  $C^{0}(\overline{Q}_{0,l})\times C^{0}(\overline{Q}_{0,l})$, the standard parabolic energy estimates and the regularity of the function $G$ (as well as the regularity of its inverse $G^{-1}$),  we get that $y_{\ell_n} \rightarrow 0$ strongly in  $L^{2}(Q_{0,l})$ which, in turn, implies 
$$		
	 \psi_{\ell_n}	\rightarrow \psi_{\ell}~~\mbox{strongly in}~~L^{2}(Q_{0,l}).
$$	
	Analogously, we can prove that $\zeta_{\ell_n} \rightarrow \zeta_{\ell}$ strongly in $L^{2}(Q_{0,r})$.
	
	Now, we recall that, for each $\varepsilon > 0$, there exists a unique $(\varphi_{T,\varepsilon}^{n},\phi^{n}_{T,\varepsilon})$ in $L^2(0,\ell_0)\times L^2(\ell_0,L)$ that minimizes the functional $J_{\ell_n,\varepsilon}$, defined in \eqref{am100}. Due to the facts that $J_{\ell_n,\varepsilon}(\varphi^{n}_{T,\varepsilon},\phi^{n}_{T,\varepsilon}) \leq 0$ and the constant appearing in the right side of \eqref{am18} does not depend on $n$, we get that the minimizers are uniformly bounded with respect to $n$ in the space $L^2(0,\ell_0)\times L^2(\ell_0,L)$ and the corresponding $(\varphi^{n}_{\varepsilon},\phi^{n}_{\varepsilon})$, solutions to \eqref{am1000}, are uniformly bounded  
	spaces given in \eqref{am999}. Therefore, there exist $(\varphi_{T,\varepsilon},\phi_{T,\varepsilon})$ in $L^2(0,\ell_0)\times L^2(\ell_0,L)$ and $(\varphi_{\varepsilon},\phi_{\varepsilon})$ in $L^2(Q_{0,l})\times L^2(Q_{0,r})$ such that, at least for a subsequence, one has
\begin{equation}\label{convergence}
             \left\{
             \begin{array}{l}
             (\varphi^{n}_{T,\varepsilon},\phi^{n}_{T,\varepsilon}) \rightarrow (\varphi_{T,\varepsilon},\phi_{T,\varepsilon})~~\mbox{weakly in}~~L^2(0,\ell_0)\times L^2(\ell_0,L),\\
             \\
              (\varphi_{\varepsilon}^{n}(\cdot,0),\phi_{\varepsilon}^{n}(\cdot,0)) \rightarrow (\varphi_{\varepsilon}(\cdot,0),\phi_{\varepsilon}(\cdot,0))~~\mbox{weakly in}~~L^2(0,\ell_0)\times L^2(\ell_0,L)~~\mbox{and}\\
              \\
             (\varphi^{n}_{\varepsilon},\phi^{n}_{\varepsilon}) \rightarrow (\varphi_{\varepsilon},\phi_{\varepsilon})~~\mbox{strongly in}~~L^2(Q_{0,l})\times L^2(Q_{0,r}).
             \end{array}
             \right.
\end{equation}	 
	
	We will show now that $(\varphi_{T,\varepsilon},\phi_{T,\varepsilon})$ is the unique minimizer of the functional $J_{\ell,\varepsilon}$. Indeed, we first note from the convergences in \eqref{conv:psi-zeta} and $\eqref{convergence}_3$ that  
	$$
	(\mathbb{P}^{\ell_n}_{l}(\varphi^{n}_{\varepsilon}),\mathbb{P}^{\ell_n}_{r}(\phi^{n}_{\varepsilon})) \rightarrow (\mathbb{P}^{\ell}_{l}(\varphi_{\varepsilon}),\mathbb{P}^{\ell}_{r}(\phi_{\varepsilon}))\quad  \hbox{strongly in}\quad L^2(Q_{0,l})\times L^2(Q_{0,r}).
	$$ 
	Then, using this fact and the weak convergences $\eqref{convergence}_{1,2}$,  we easily  get
\begin{equation}\label{minimizerfunc}
        J_{\ell,\varepsilon}(\varphi_{T,\varepsilon},\phi_{T,\varepsilon}) \leq \liminf_{n} J_{\ell_{n},\varepsilon}(\varphi^{n}_{T,\varepsilon},\phi^{n}_{T,\varepsilon}).
\end{equation}
	
	Now, let $(\varphi_T,\phi_T)$ be given in $L^2(0,\ell_0)\times L^2(\ell_0,L)$  and let the $(\varphi^{n},\phi^{n})$ be the solutions to the system \eqref{am1000}, with $\ell$ replaced by $\ell_n$, for $n = 1, 2, \ldots$. Then, using the same ideas that led to prove of \eqref{conv:psi-zeta}, we can ensure that the $(\varphi^{n},\phi^{n})$ converge strongly in $L^2(Q_{0,l})\times L^2(Q_{0,r})$, to the solution $(\varphi,\phi)$ to \eqref{am1000} and the $(\varphi^{n}(\cdot,0),\phi^{n}(\cdot,0))$ converge weakly in $L^2(0,\ell_0)\times L^2(\ell_0,L)$ to $(\varphi(\cdot,0),\phi(\cdot,0))$. Therefore, from \eqref{conv:psi-zeta} and \eqref{minimizerfunc} we deduce that
\begin{equation}\label{minimialfree}
            J_{\ell,\varepsilon}(\varphi_{T,\varepsilon},\phi_{T,\varepsilon}) \leq \liminf_{n} J_{\ell_{n},\varepsilon}(\varphi^{n}_{T,\varepsilon},\phi^{n}_{T,\varepsilon}) \leq \liminf_{n} J_{\ell_{n},\varepsilon}(\varphi_{T},\phi_{T}) = J_{\ell,\varepsilon}(\varphi_{T},\phi_{T}).
\end{equation}
    Since $(\varphi_T,\phi_T)\in L^2(0,\ell_0)\times L^2(\ell_0,L)$ is arbitrary, we conclude that $(\varphi_{T,\varepsilon},\phi_{T,\varepsilon})$ minimizes $J_{\ell,\varepsilon}$.
    
   Now, let us consider, for each $n$, the pair $(h^{\ell_n}_{l,\varepsilon}1_{\mathcal{\omega}_{l}},h^{\ell_n}_{r,\varepsilon}1_{\mathcal{\omega}_{r}})$ associated by Proposition \ref{controlconstr} to $\ell_n$. It follows easily from \eqref{conv:psi-zeta}, $\eqref{convergence}_3$ and \eqref{minimialfree} that 
\begin{equation}\label{convergencecontrol}
               (h^{\ell_n}_{l,\varepsilon}1_{\mathcal{\omega}_{l}},h^{\ell_n}_{r,\varepsilon}1_{\mathcal{\omega}_{r}}) \rightarrow (h^{\ell}_{l,\varepsilon}1_{\mathcal{\omega}_{l}},h^{\ell}_{r,\varepsilon}1_{\mathcal{\omega}_{r}})~~\mbox{strongly in}~~L^2(\mathcal{O}_{l})\times L^2(\mathcal{O}_{r}),
\end{equation}     
    where $(h^{\ell}_{l,\varepsilon}1_{\mathcal{\omega}_{l}},h^{\ell}_{r,\varepsilon}1_{\mathcal{\omega}_{r}})$ is the control corresponding to $\ell$. Let us denote by $(p^{n}_{\varepsilon},q^{n}_{\varepsilon})$ and $(p_{\varepsilon},q_{\varepsilon})$ the solutions to \eqref{am7} associated, respectively, to $(h^{\ell_n}_{l,\varepsilon}1_{\mathcal{\omega}_{l}},h^{\ell_n}_{r,\varepsilon}1_{\mathcal{\omega}_{r}})$ and $(h^{\ell}_{l,\varepsilon}1_{\mathcal{\omega}_{l}},h^{\ell}_{r,\varepsilon}1_{\mathcal{\omega}_{r}})$. Then, if we set $(y^{n},z^{n}):= (p_{\varepsilon}^{n} - p_{\varepsilon},q_{\varepsilon}^{n} - q_{\varepsilon})$ and $(w_{l}^{n}1_{\mathcal{\omega}_l},w_{r}^{n}1_{\mathcal{\omega}_r}) := (h^{\ell_n}_{l,\varepsilon}1_{\mathcal{\omega}_l} - h^{\ell}_{l,\varepsilon}1_{\mathcal{\omega}_l}, h^{\ell_n}_{r,\varepsilon}1_{\mathcal{\omega}_r} - h^{\ell}_{r,\varepsilon}1_{\mathcal{\omega}_r}) $, we find that:
\begin{equation}\label{difference}
              \left\{
              \begin{array}{lll}
              y^{n}_t - d^{\ell_n}_ly^{n}_{\xi\xi} + b_l^{\ell_n}y^{n}_{\xi} = w_{l}^{n}1_{\mathcal{\omega}_l} + F^{n}_{l} & \mbox{in} & Q_{0,l}, \\
              z^{n}_t - d^{\ell_n}_rz^{n}_{\xi\xi} + b_r^{\ell_n}z^{n}_{\xi} = w_{r}^{n}1_{\mathcal{\omega}_r} + F^{n}_{r}& \mbox{in} & Q_{0,r},\\
              y^{n}(0,\cdot) = y^{n}(\ell_0,\cdot) = z^{n}(\ell_0,\cdot) = z^{n}(L,\cdot) = 0 & \mbox{in} &  (0,T),\\ 
              y^{n}(\cdot,0) = 0 & \mbox{in} &  (0,\ell_0),\\
              z^{n}(\cdot,0) = 0 & \mbox{in} &  (\ell_0,L),
              \end{array}
              \right.
\end{equation}
       where  
       \[
       F^{n}_{l} :=  (d^{\ell_n}_l - d^{\ell}_l)p_{\varepsilon, \xi\xi} {\color{red}-} (b^{\ell_n}_l - b_l^{\ell})p_{\varepsilon, \xi}~~\mbox{and}~~F^{n}_{l} :=  (d^{\ell_n}_l - d^{\ell}_l)p_{\varepsilon, \xi\xi} {\color{red}-} (b^{\ell_n}_l - b_l^{\ell})p_{\varepsilon, \xi}.
       \]

		 Recall that $(p_0,q_0)\in W_0^{1,4}(0,\ell_0) \times W_0^{1,4}(\ell_0,L)$. Therefore, arguing as in Section \ref{regularityy} and Lemma \ref{lemmaregul}, we first deduce that $(F^{n}_{l},F^{n}_{r}) \in L^{4}((\ell_l,\ell_0)\times (0,T))\times L^{4}((\ell_0,\ell_r)\times (0,T))$
		and $(y_{\xi}^{n}(\ell_0,\cdot),z_{\xi}^{n}(\ell_0,\cdot))\in C^{1/8}([0,T])\times C^{1/8}([0,T])$ and also, that 
		\[
        \|y_{\xi}^{n}(\ell_0,\cdot)\|_{C^{1/8}} + \|z_{\xi}^{n}(\ell_0,\cdot)\|_{C^{1/8}} \leq C\left( \|(F^{n}_{l},F^{n}_{r})\|_{L^{4}(L^{4})\times L^{4}(L^{4})} + \|(y^n,z^n)\|_{L^2(H^2)\times L^2(H^2)}\right)
\]
       for some $C > 0$, independent of $n$.
       
		It is not difficult to check that, in this inequality, the first term in the right hand side go to $0$ when $n\rightarrow \infty$. From standard parabolic estimates applied to \eqref{difference} and \eqref{convergencecontrol}, we also have the convergence to zero of the second term in the right hand side. 
		Therefore, we deduce that $(p^{n}_{\varepsilon,\xi}(\ell_0,\cdot),q^{n}_{\varepsilon,\xi}(\ell_0,\cdot)) \rightarrow (p_{\varepsilon, \xi}(\ell_0,\cdot),q_{\varepsilon, \xi}(\ell_0,\cdot))$ in $C^{1/8}([0,T])$, which implies the continuity of $\Lambda_{\varepsilon}$. 

	\item  $\Lambda_{\varepsilon}$ {\it is compact}. Note that $\Lambda_{\varepsilon}(\ell)'(t) =  \theta(t)$ for all $\ell \in \mathcal{A}_{R}$ and all $t\in [0,T]$, where $\theta$ is the function defined in \eqref{thetadefi}. Thus, we conclude easily from \eqref{am104} that  $\Lambda_{\varepsilon}(\mathcal{A}_{R})$ is a bounded subset
	of $C^{1 + 1/8}([0,T])$, which is a compact subset of $C^{1}([0,T])$. 
       
        \item There exists $\delta > 0$, such that, whenever $(p_0,q_0) \in  W_0^{1,4}(0,\ell_0)\times  W_0^{1,4}(\ell_0,L)$ and
\[        
        \|(p_0,q_0)\|_{W_0^{1,4}\times W_0^{1,4}} + |\ell_0 -\ell_T|\leq \delta,
\]        
          then $\Lambda_{\varepsilon}(\mathcal{A}_R) \subset \mathcal{A}_R$. Indeed, it follows easily from \eqref{am104} that there exists $C > 0$ (depending on $\ell_l,\ell_r,\omega_l,\omega_r,T$ and $R$) such that
        \[
     |\mathcal{L}(t) - \ell_0| \leq CT\left(\|(p_0,q_0)\|_{ W_0^{1,4}\times  W_0^{1,4}} + |\ell_0 -\ell_T|\right)\quad \forall t\in [0,T], 
        \]
        and
        \[
        |\mathcal{L}'(t)|\leq  C\left(\|(p_0,q_0)\|_{ W_0^{1,4}\times  W_0^{1,4}} + |\ell_0 -\ell_T|\right)\quad \forall t\in [0,T].
        \]
 	Thus, we get the result by taking $\delta \leq \min\left\{\dfrac{R}{C},\dfrac{\ell_0 - \tilde{\ell}_l}{CT},\dfrac{\tilde{\ell}_r - \ell_0 }{CT}\right\}$.  
        \end{itemize}
     Consequently, for initial data $p_0, q_0$ and $\ell_0$ satisfying the above conditions, Schauder's Fixed-Point Theorem guarantees that there exists $\ell_{\varepsilon}\in \mathcal{A}_R$ such that $\Lambda_{\varepsilon}(\ell_{\varepsilon}) = \ell_{\varepsilon}$. It is easy to see that this is suffices to achieve the proof of the result.  
	\end{proof}
	
     Now, we are in conditions to prove Theorem \ref{twophase1}. Indeed, since the fixed-points $\ell_{\varepsilon}$ and controls $(h^{\varepsilon}_{l},h^{\varepsilon}_{r})$ furnished by the Theorem \ref{twophase1_approx} are uniformly bounded, respectively, in $C^{1 + 1/8}([0,T])$ and $L^2(\mathcal{O}_{l})\times L^2(\mathcal{O}_{r})$, then there exist $\ell$ and $(h_{l},h_{r})$ such that, at least for a subsequence, we have 
\begin{equation}\label{convergencevarepsilon}
	      \left\{
          \begin{array}{l}
          \ell_{\varepsilon} \rightarrow \ell~~\mbox{strongly in}~~C^1([0,T])~~\mbox{and}\\
           (h^{\varepsilon}_{l},h^{\varepsilon}_{r}) \rightarrow  (h_{l},h_{r})~~\mbox{weakly in}~~L^2(\mathcal{O}_{l})\times L^2(\mathcal{O}_{r}).
          \end{array}
          \right.
\end{equation}
		Since the coefficients $(d^{\ell_{\varepsilon}}_{l},b^{\ell_{\varepsilon}}_{l})$ and $(d^{\ell_{\varepsilon}}_{r},b^{\ell_{\varepsilon}}_{r})$ are uniformly bounded, respectively, in $L^{\infty}(Q_{0,l})\times L^{\infty}(Q_{0,l})$ and  $L^{\infty}(Q_{0,r})\times L^{\infty}(Q_{0,r})$, we can conclude from energy estimates
		and \eqref{convergencevarepsilon}  that there exists $(p,q)$ with
\begin{equation}\label{convergencevarepsilon_1}
	      \left\{
          \begin{array}{lll}
          p_{\varepsilon} \rightarrow p & \mbox{weakly in} & L^2(0,T;H^2(0,\ell_0)\cap H_0^1(0,\ell_0))\cap H^1(0,T;L^2(0,\ell_0)) ,\\
            q_{\varepsilon} \rightharpoonup q & \mbox{weakly in} & L^2(0,T;H^2(0,\ell_0)\cap H_0^1(\ell_0,L))\cap H^1(0,T;L^2(\ell_0,L)),
          \end{array}
          \right.
\end{equation}
	where the $(p_{\varepsilon},q_{\varepsilon})$ are associated to the $(h^{\varepsilon}_{l},h^{\varepsilon}_{r})$.
		Then, $(p,q)$ is the solution to \eqref{am7}, associated to $(h_{l},h_{r})$. Moreover, 
		from \eqref{approximatedcontrol2}, it is clear that $\ell(T) = \ell_T$ and $(p(\cdot,T),q(\cdot,T)) = (0,0)$ on $(0,T)$.
		
	Furthermore, as a consequence of \eqref{convergencevarepsilon_1} and the embeddings
\begin{equation*}
       \begin{alignedat}{2}
       H^2(0,\ell_0) \stackrel{c}{\hookrightarrow} C^1([0,\ell_0]) \hookrightarrow L^2(0,\ell_0)\quad \hbox{and}\quad 
       H^2(\ell_0,L) \stackrel{c}{\hookrightarrow} C^1([\ell_0,L]) \hookrightarrow L^2(\ell_0,L),
       \end{alignedat}
\end{equation*}
	we find that, for any given $t \in [0,T]$, the following holds:
\begin{equation*}
\begin{alignedat}{2}
\ell(t) = \lim_{\varepsilon\rightarrow 0}\ell_{\varepsilon}(t) & = \lim_{\varepsilon\rightarrow 0}   \left(\ell_0 - \displaystyle\int_{0}^{t}\left[d_lp_{\varepsilon,\xi}(\ell_0,\tau) - d_rq_{\varepsilon,\xi}(\ell_0,\tau)\right]\,d\tau\right)\\
                     & = \ell_0 - \displaystyle\int_{0}^{t}\left[d_lp_{\xi}(\ell_0,\tau) - d_rq_{\xi}(\ell_0,\tau)\right]\,d\tau.
\end{alignedat}
\end{equation*}	
	This implies the Stefan condition \eqref{am4}$_7$ is satisfied by $(\ell,p,q)$ and ends the proof of Theorem \ref{twophase1}.


 \section{Additional comments} \label{sec:comments}

   
\subsection{Lack of controllability with only one control}\label{sec:lack}
 
 	In the next result it is proved that, if $h_{l}$ or $h_{r}$ vanishes and the interface does not collapse to the boundary, then null controllability cannot hold. 
\begin{thm}\label{twophase0}
	Assume that  $u_0\in H_0^1(0,\ell_0)$ with $u_0 \geq 0$, $v_0\in H_0^1(\ell_0,L)$ with $v_0 \leq 0$ and 
	$v_0\not\equiv0$. Then, if $(h_l,h_r)\in L^2(\mathcal{O}_l)\times L^2(\mathcal{O}_r)$, $h_r\equiv0$ and 
	the associated strong solution to \eqref{am} satisfies $\ell(t) < L$ for all $t \in [0,T]$, we necessarily have
	\begin{equation*}\label{am20}
	v(\cdot,T) \not \equiv 0~~\mbox{in}~~(\ell(T),L).  
\end{equation*}
\end{thm}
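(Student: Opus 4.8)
The plan is to argue by contradiction and exploit the strong maximum principle for the homogeneous heat equation satisfied by $v$ in the solid phase. First observe that, reading off the equations in \eqref{am} with $h_r\equiv0$, once the interface $\ell$ is known the component $v$ is the unique strong solution of the uncoupled problem
\[
v_t-d_rv_{xx}=0\ \text{ in }Q_r,\qquad v(\ell(t),t)=v(L,t)=0\ \text{ on }(0,T),\qquad v(\cdot,0)=v_0,
\]
with $v_0\le0$ and $v_0\not\equiv0$. Since $\ell\in H^1(0,T)\cap C^1((0,T])\hookrightarrow C([0,T])$ and $\ell(t)<L$ for every $t\in[0,T]$, the set $Q_r$ is a bounded non-cylindrical domain and $(\ell(T),L)\neq\emptyset$; I will in fact establish the stronger conclusion that $v(x,T)<0$ for every $x\in(\ell(T),L)$.

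To deal with the moving lateral boundary, I would transport the problem to the cylinder $Q_{0,r}=(\ell_0,L)\times(0,T)$ through the diffeomorphism $\Phi$ of Section~\ref{reformulation} (or any change of variables straightening $x=\ell(t)$ onto $x=\ell_0$): exactly as in the derivation of \eqref{am4}, the function $q:=v\circ\Phi^{-1}$ satisfies
\[
q_t-d^{\ell}_rq_{\xi\xi}+b^{\ell}_rq_{\xi}=0\ \text{ in }Q_{0,r},\qquad q(\ell_0,\cdot)=q(L,\cdot)=0,\qquad q(\cdot,0)=q_0\le0,\quad q_0\not\equiv0,
\]
where $d^{\ell}_r$ is bounded and bounded below away from zero (uniform parabolicity) and, since $\ell\in C^1((0,T])$, the coefficients $d^{\ell}_r$ and $b^{\ell}_r$ are continuous on $[\ell_0,L]\times[\tau,T]$ for every $\tau>0$. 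Consequently $q$ is a classical solution inside $(\ell_0,L)\times(0,T]$ by interior parabolic regularity, and the weak maximum principle (applied on $(\ell_0,L)\times(\tau,T]$ and then letting $\tau\downarrow0$, using $q_0\le0$ and $q\in C([0,T];L^2(\ell_0,L))$) gives $q\le0$ throughout $Q_{0,r}$.

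Suppose now, for contradiction, that $v(\cdot,T)\equiv0$ on $(\ell(T),L)$, equivalently $q(\cdot,T)\equiv0$ on $(\ell_0,L)$. Then $q$ attains its maximum value $0$ over $\overline{Q_{0,r}}$ at some interior spatial point at the final time $t_0=T>0$, and the strong maximum principle for the uniformly parabolic operator $q\mapsto q_t-d^{\ell}_rq_{\xi\xi}+b^{\ell}_rq_{\xi}$ forces $q\equiv0$ on the whole slab $(\ell_0,L)\times(0,T]$; letting $t\downarrow0$ this yields $q_0\equiv0$, contradicting $q_0\not\equiv0$. Hence $q<0$, and therefore $v<0$, at every point with $0<t\le T$ and $\ell(t)<x<L$; in particular $v(x,T)<0$ for all $x\in(\ell(T),L)$, which is the desired conclusion.

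The only delicate points are the use of the strong maximum principle \emph{up to the final time} $t=T$ and the weak ($L^2$-in-time near $t=0$) regularity of the drift $b^{\ell}_r$. Both are absorbed by working on the cylinder with the interior-smooth, uniformly parabolic equation above and by transferring the vanishing back to $t=0$ via the continuity $q\in C([0,T];L^2)$; alternatively one may quote directly the strong maximum principle for second-order parabolic equations on non-cylindrical domains with continuous lateral boundary on which $v$ vanishes. This is precisely the mechanism anticipated in Remark~\ref{rmq:1}: acting only on the liquid phase cannot influence the sign-definite heat flow in the solid phase, so $v$ cannot be driven to zero as long as the solid region survives, that is, as long as $\ell(T)<L$.
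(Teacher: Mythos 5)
Your proof is correct and follows essentially the same route as the paper: transform $v$ to $q=v\circ\Phi^{-1}$ on the cylinder $Q_{0,r}$, note that $h_r\equiv0$ makes the solid-phase equation homogeneous with $q_0\le0$, $q_0\not\equiv0$, then combine the weak maximum principle ($q\le0$) with the strong maximum principle at the final time to force $q\equiv0$ and contradict $q_0\not\equiv0$. Your extra care about coefficient regularity near $t=0$ and the continuity $q\in C([0,T];L^2)$ is a welcome refinement but does not change the argument.
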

\begin{proof}
	 Let us assume, by contradiction, that \eqref{am} is null-controllable with $h_r \equiv 0$, i.e. 
	$u(\cdot,T) \equiv 0$ in $(0,\ell(T))$ and $v(\cdot,T) \equiv 0$ in $(\ell(T), L)$.
	
	Then, considering the diffeomorphism $\Phi$ and the function $q = v \circ \Phi^{-1}$, defined in Section~\ref{reformulation}, we get easily that $q$ is the solution to
\begin{equation*}\label{lackcontroll}
        \left\{
        \begin{array}{lll}
        q_t - d_r^{\ell}q_{\xi\xi} + b_r^{\ell}q_{\xi} = 0 & \mbox{in} & Q_{0,r},\\
        q(\ell_0,\cdot) = q(L,\cdot) = 0 & \mbox{on} & (0,T),\\
        q(\cdot,0) = q_0  & \mbox{in} & (\ell_0, L),
        \end{array}
        \right.
\end{equation*}	 
	where $q_0 := v_0 \circ \left[G(\cdot,\ell_0)\right]^{-1} \in H_0^1(\ell_0,L)$ and, obviously, $q_0\leq 0$ and $q_0\not\equiv 0$. We also have
\begin{equation}\label{lackcontroll_1}
	    q(\cdot,T) \equiv 0 ~~\mbox{in}~~(\ell_0,L).
\end{equation}
      We can apply the weak maximum principle to $q$ in $Q_{0,r}$. Obviously, this gives $q \leq 0 $ in $Q_{0,r}$.
   
  	Thus, if \eqref{lackcontroll_1} holds we get, from the strong maximum principle and from the fact $q \leq 0 $ in $Q_{0,r}$, that $q \equiv 0$ in $\overline{Q}_{0,r}$, what contradicts $q_0 \not\equiv 0$.
\end{proof}


  \begin{remark}\label{rmq:2}
   	{\rm Note that the previous argument also shows that the null controllability for~\eqref{am} cannot be achieved keeping the signs of the initial conditions in each phase region.
	In other words, in order to drive the solution to zero at time $T$, the liquid and solid state must penetrate each other before $T$.
	\Fin}
	\end{remark} 

\subsection{Boundary controllability and other extensions}

	We can prove local boundary controllability results similar to Theorem~\ref{twophase}.
	Thus, let us introduce the system
        \begin{equation}\label{am:boundary}
	\left\{
		\begin{array}{lll}
			u_t - d_lu_{xx} = 0 	&\mbox{in}& 	Q_l,		\\
			\noalign{\smallskip}\displaystyle	
      			v_t - d_rv_{xx} = 0	&\mbox{in}& 	Q_r,		\\
			\noalign{\smallskip}\displaystyle	
      			u(0,t)  = k_l(t),~ v(L,t)  = k_r(t)  				&\mbox{on}& 	(0,T),	\\			
		  \noalign{\smallskip}\displaystyle	
      			u(\cdot,0) = u_0  				&\mbox{in}& 	(0,\ell_0),	\\ 
			\noalign{\smallskip}\displaystyle	
      			v(\cdot,0) = v_0 				&\mbox{in}& 	(\ell_0,L), 	\\
			\noalign{\smallskip}\displaystyle
				u(\ell(t),t) =v(\ell(t),t) =0   &\mbox{on}& 	(0,T),	\\
			\noalign{\smallskip}\displaystyle	  		
			- \ell'(t) = d_lu_x(\ell(t),t) - d_rv_{x}(\ell(t),t)	&\mbox{on}&	(0,T).
		\end{array}
	\right.
        \end{equation}
where $(k_l,k_r)$ stands for the boundary control pair.
	
	Then, using a domain extension technique and Theorem~\ref{twophase}, it is easy to prove that, if $u_0$ and $v_0$ are sufficiently small, and $\ell_0$ is sufficiently close to $\ell_T$, there exist controls $(k_l,k_r)$ and associated solutions to~\eqref{am:boundary} that satisfy $\ell(T) = \ell_T$, $u(\cdot,T) = 0$ in $(0,\ell_T)$ and $v(\cdot,T) = 0$ in $(\ell_T,L)$.
	
	 Let us finally mention that the arguments and results in this paper can also be used to solve other variants of the two-phase Stefan controllability problem.
	Thus, we can prove results similar to Theorem~\ref{twophase} when the controls are Neumann data, we can assume that the equations contain lower order terms or even appropriate nonlinearities, etc.

%
%


\end{document}